\documentclass[a4paper,12pt]{article}
\usepackage[utf8]{inputenc}
\usepackage[english]{babel}
\usepackage{enumitem}
\usepackage{color}
\usepackage{amsmath}
\usepackage{amssymb}
\usepackage{amsthm}
\usepackage{amsfonts}
\usepackage{epsfig}
\usepackage{graphicx}
\usepackage{epstopdf}
\usepackage{multirow}
\usepackage{cite}
\usepackage{caption}
\usepackage{subfigure}
\captionsetup{justification=centering}
\setlength{\topmargin}{-1.0cm}
\setlength{\textheight}{22cm}
\setlength{\textwidth}{15cm}
\setlength{\oddsidemargin}{8mm}
\setlength{\evensidemargin}{8mm}
\newtheorem{theorem}{Theorem}[section]
\newtheorem{definition}{Definition}[section]
\newtheorem{lemma}{Lemma}[section]
\newtheorem{proposition}{Proposition}[section]

\newtheorem{remark}{Remark}[section]
\newtheorem{assumption}{Assumption}[section]
\newtheorem{algorithm}{Algorithm}[section]

\theoremstyle{example}
\theoremstyle{definition}
\theoremstyle{Assumption}

\DeclareMathOperator*{\gra}{gra}
\DeclareMathOperator*{\dom}{dom}
\DeclareMathOperator*{\zer}{zer}
\DeclareMathOperator*{\Id}{Id}

\DeclareMathOperator*{\nt}{int}
\allowdisplaybreaks

\bibliographystyle{unsrt}
\begin{document}
	\title{\textbf{Nonlinear three-operator splitting algorithms with momentum for monotone inclusions}}
	\author{ \sc \normalsize Liqian Qin$^{a}${\thanks{ email: qlqmath@163.com}},\,\,Aviv Gibali$^{b}${\thanks{ email: avivgi@hit.ac.il}},\,\, Cuijie Zhang$^{c}${\thanks{ email: cuijie\_zhang@126.com}},\,\,
   Yuchao Tang$^{a}${\thanks{ Corresponding author. email: hhaaoo1331@163.com}},\,\, \\
		\small $^a$School of Mathematics and Information Science, Guangzhou University,\\
 \small  Guangzhou 510006, P.R. China \\
 \small $^{b}$Department of Applied Mathematics, Holon Institute of Technology,\\
\small  Holon 5810201, Israel\\
\small $^{c}$ College of Science, Civil Aviation University of China, \\
\small Tianjin, 300300, P.R. China \\
	}
	\date{}
	\maketitle
	
	\begin{abstract}

		{In this paper, we introduce three novel splitting algorithms for solving structured monotone inclusion problems involving the sum of a maximally monotone operator, a monotone and Lipschitz continuous operator and a cocoercive operator. Each proposed method extends one of the classical schemes: the semi-forward-reflected-backward splitting algorithm, the semi-reflected-forward-backward splitting algorithm, and the outer reflected forward-backward splitting algorithm by incorporating a nonlinear momentum term. Under appropriate step-size conditions, we establish the weak convergence of all three algorithms, and further prove their  $R$-linear convergence rates under strong monotonicity assumptions. Preliminary numerical experiments on both synthetic datasets and real-world quadratic programming problems in portfolio optimization demonstrate the effectiveness and superiority of the proposed algorithms.
		}
	\end{abstract}
	
	\noindent{\bf Keywords:} Monotone inclusion problem; Semi-forward-reflected-backward splitting; Semi-reflected-forward-backward splitting; Outer reflected forward-backward splitting; Momentum.

    \noindent {\bf AMS Subject Classification}:  47H05, 65K15, 90C25

	\section{ Introduction }
Monotone inclusion problems form a cornerstone of modern optimization theory and arise in a wide range of applications, including signal and image processing \cite{Combettes2005, Combettes2012, Condat2023}, machine learning \cite{Boyd2011,Barnert,Bot2023}, image reconstruction \cite{Bot2014, BricenoArias2011}, and game theory \cite{BricenoArias2019}, etc. A key challenge in this area is the resolution of composite inclusions that involve the sum of several monotone operators, where the specific structure of each operator plays a crucial role in the development of efficient numerical methods. In this work, we focus on the following monotone inclusion problem:
	
\noindent{\bf Problem.} Consider a real Hilbert space $\mathcal{H}$. Let $A: \mathcal{H} \rightarrow 2^{\mathcal{H}}$ be a maximally monotone operator, let $B: \mathcal{H} \rightarrow \mathcal{H}$ be a monotone and $\mu$-Lipschitz operator, and let $C: \mathcal{H} \rightarrow \mathcal{H}$ be $\frac{1}{\beta}$-cocoercive operator for some $\beta \in (0, +\infty).$ The problem is to find $x \in \mathcal{H}$  such that
	\begin{equation}
		\label{ABC}
		0 \in Ax+Bx+Cx,
	\end{equation}
where the solution set, denoted by $\zer(A+B+C)$, is assumed to be nonempty.

Solving a monotone inclusion problem involving three operators can be viewed as a natural extension of splitting algorithms for the two-operator case. In the classical two-operator setting (with operators $A$ and $B$), Tseng’s algorithm-a forward-backward-forward splitting method  \cite{Tseng}– finds a zero of $A + B$ by performing one backward (implicit) step for $A$ and two forward evaluations of the single-valued Lipschitz monotone operator $B$ per iteration. A naive attempt to handle three operators (say $A$, $B$, and $C$) is to combine $B + C$ into one Lipschitz monotone operator and then apply Tseng’s method \cite{Tseng}. In theory, this works because the sum of a cocoercive operator and a Lipschitz monotone operator is still monotone and Lipschitz. In practice, however, this direct approach is inefficient, each iteration would require evaluating both $B$ and $C$ twice, failing to exploit the special cocoercivity property of $C$. Clearly, a more tailored approach is needed to handle three-operator problems efficiently. Brice\~{n}o-Arias and Davis \cite{FBHF} addressed the above challenge by leveraging the cocoercivity of operator $C$ to design the forward-backward-half forward splitting (FBHFS) algorithm as follows:
\begin{equation}
		\label{FBHF}
			\left\{
		\begin{array}{lr}
			y_k=J_{\gamma_k A}(x_k-\gamma_k(B+C)x_k), & \\
			x_{k+1}=y_k+\gamma_k(Bx_k-By_k). & \\
		\end{array}
		\right.
\end{equation}
where $ \gamma_k $ is the step-size,  $J_{\gamma_k A}=( \Id+\gamma_k A)^{-1}$ is the resolvent of $A$. They proved the weak convergence of the FBHFS algorithm \eqref{FBHF} with a variable step-size, $\gamma_{k}\in [\eta , \chi-\eta]$, $\eta \in (0,\frac{\chi}{2})$, $\chi=\frac{4 }{\beta+\sqrt{\beta^2+16\mu^2}}$, as well as with a line search procedure. This method modifies Tseng’s scheme \cite{Tseng} to avoid redundant computations on the cocoercive part. Each iteration of the FBHFS algorithm \eqref{FBHF} still performs two forward evaluations of $B$ (the Lipschitz monotone operator), as in Tseng’s method  \cite{Tseng}, but requires only one forward evaluation of $C$, the cocoercive operator. By halving the forward steps on $C$, the algorithm significantly reduces per-iteration computational cost compared to the naive approach. Importantly, the FBHFS algorithm \eqref{FBHF} seamlessly recovers previous splitting algorithms as special cases: if $C=0$ (no cocoercive part), it reduces exactly to Tseng’s forward-backward-forward algorithm  \cite{Tseng}, and if $B=0$ (no Lipschitz part), it becomes the classic forward-backward splitting method \cite{Lions,Passty}. Furthermore, the authors also considered the FBHFS algorithm \eqref{FBHF} employing non-self-adjoint linear operators, which is defined by
\begin{equation}
	\label{nonFBHF}
	\left\{
	\begin{array}{lr}
		y_k=(M_k+A)^{-1}(M_kx_k-(B+C)x_k), & \\
		x_{k+1}=x_k+\lambda_k(M_k(y_k-x_k)+Bx_k-By_k). & \\
	\end{array}
	\right.
\end{equation}
where $M_k:\mathcal{H} \to \mathcal{H}$ are bounded linear operators, $\forall k \in \mathbb{N}$. Under suitable conditions on $M_k$ and $\lambda_k$, the weak convergence of the iterative scheme \eqref{nonFBHF} is established.

Subsequently, Malitsky and Tam \cite{Malitsky} introduced the semi-forward-reflected-back-ward splitting (SFRBS) algorithm, defined by
\begin{equation}
		\label{SFRBS}
			x_{k+1}=J_{\gamma A}(x_{k}-2\gamma B x_k+\gamma Bx_{k-1}-\gamma Cx_k),
\end{equation}
and the convergence of $\{x_k\}$ is obtained by the step size  satisfies $\gamma \in (0, \frac{2}{4\mu+\beta}).$ In contrast, to address problem \eqref{ABC}, Cevher and Vu \cite{Cevher2019} proposed the semi-reflected-forward-backward splitting (SRFBS) algorithm, which is defined as
\begin{equation}
	\label{SRFBS}
x_{k+1} = J_{\gamma A}(x_k-\gamma B(2x_k-x_{k-1})-\gamma Cx_k).
\end{equation}
The convergence of the SRFBS algorithm \eqref{SRFBS} was analyzed in \cite{Cevher2019} under suitable assumptions on the step size $\gamma$. More recently, Yu et al. \cite{Yu} introduced the outer reflected forward-backward splitting (ORFBS) algorithm, which iterates as follows:
\begin{equation}
\label{ORFBS}
x_{k + 1} = J_{\gamma A}(x_k - \gamma B x_k - \gamma C x_k) - \gamma (B x_k - B x_{k - 1}).
\end{equation}
Under the step size condition $\gamma < \min\left\{ \left(\frac{2}{\beta}-\epsilon_2\right)\epsilon_1, \ (3-\epsilon_3)\epsilon_2, \ \frac{1/2-\epsilon_1-1/\epsilon_3}{\mu}\right\}$ with $\epsilon_1, \epsilon_2, \epsilon_3 > 0$, they proved the weak convergence of the sequence $\{x_k\}_{k \in \mathbb{N}}$ generated by the ORFBS algorithm \eqref{ORFBS}. Motivated by the success of inertial and accelerated methods, several authors have proposed accelerated variants of the algorithms FBHFS \eqref{FBHF}, SFRBS \eqref{SFRBS}, SRFBS \eqref{SRFBS} and ORFBS \eqref{ORFBS}; see, for example, \cite{Izuchukwu2025,Zong2024a,Dong2024,Yao2024,Shehu2024,Zong2022,Qin2024,Zong2023,Tan2022,Izuchukwu2023,Tan2025,Yao2025,Zhang2025,Tongnoi2025} and references therein.

The resolvent operator $J_{\gamma A} = (\Id + \gamma A)^{-1}$ serves as the fundamental backward step in a broad class of operator splitting algorithms, whose analysis and design  heavily rely on this operator. To enhance the applicability of these methods, this study explores an innovative approach centered around a nonlinear resolvent of the form $(M + A)^{-1} M$. Here, $M: \mathcal{H} \to \mathcal{H}$ acts as a nonlinear kernel,  also referred to as a warped resolvent \cite{Giselsson,bui}. This generalization provides a powerful modeling tool for a wide spectrum of monotone inclusion algorithms, offering a unified framework that encompasses both existing and novel schemes. The central idea is to
exploit a problem adapted kernel $M$ that captures the intrinsic structure of the underlying problem. In this context, numerous studies have proposed preconditioned variants of classic algorithms, which enable flexible and efficient problem dependent preconditioning \cite{Bredies2015,Raguet2015,Bredies2017}. In particular, the work of \cite{Bredies} introduced the concept of an admissible preconditioner. Specifically, for an operator $A: \mathcal{H} \rightarrow 2^{\mathcal{H}}$,  a bounded linear operator $M$ is said to be an admissible preconditioner if it is self-adjoint, positive semidefinite, and satisfies that  $(M+A)^{-1}M$ is single-valued with full domain. Building upon this notion, the authors proposed the degenerate preconditioned proximal point (Degenerate PPP) method \cite{Bredies} and established its convergence. As stated in \cite{Bredies2015}, the Douglas–Rachford splitting algorithm \cite{Douglas1956} can be viewed as a particular instance of the degenerate preconditioned proximal point framework. Furthermore, Sun et al. \cite{Sun2025} developed an accelerated preconditioned alternating direction method of multiplier (pADMM) by combining the degenerate PPP method \cite{Bredies}  with  the fast  Krasnosel'ski\u {\i}-Mann iteration \cite{fastKM}. Based on the Halpern iteration \cite{Halpern1967}, Zhang et al. subsequently proposed the Halpern-type preconditioned proximal point algorithm \cite{Zhang2025a}.

While the convergence-guaranteeing methods in \cite{bui, Giselsson} rely on corrective projection steps, Morin et al. \cite{Morin} proposed an alternative strategy that incorporates a nonlinear momentum term, which typically resulting in a lower per-iteration computational cost. Specifically, they introduced a nonlinear forward-backward splitting algorithm with momentum correction to solve the two-operator monotone inclusion problem involving a maximally monotone operator and a cocoercive operator. The algorithm is stated as follows:
\begin{equation}
\label{maincite}
\aligned
&x_{k+1}=(M_k+A)^{-1}(M_kx_k-Cx_k+\gamma_k^{-1}u_k), \\
&u_{k+1}=(\gamma_kM_k-S)x_{k+1}-(\gamma_kM_k-S)x_k.
\endaligned
\end{equation}
Here, $\gamma_k > 0$, $M_k: \mathcal{H} \rightarrow \mathcal{H}$ are general nonlinear operators, and $S$ is a bounded linear, self-adjoint, strongly positive operator. For the convergence analysis, it is assumed that $\gamma_k M_k - S$ is $L_k$-Lipschitz continuous with respect to the norm induced by $S$ for some $L_k \geq 0$, for all $k \in \mathbb{N}$. The generality of algorithm \eqref{maincite} has been demonstrated by its ability to recover several known methods, such as the forward-backward splitting algorithm \cite{Lions, Passty} and the forward-reflected-backward method \cite{Malitsky}, etc. Very recently,  Rold\'{a}n and Vega \cite{Roldan2025} proposed an inertial and relaxation extension of \eqref{maincite}. In contrast, in \cite{TangQin}, we introduced a FBHFS algorithm with momentum.

This paper aims to further extend the momentum technique to the splitting algorithms, namely the SFRBS algorithm  \eqref{SFRBS}, the SRFBS algorithm \eqref{SRFBS}, and the ORFBS algorithm \eqref{ORFBS}, respectively. To this end, we propose three novel algorithms that integrate warped resolvents with momentum for solving the monotone inclusion problem \eqref{ABC}. The main contributions of this paper are summarized as follows:
\begin{itemize}
	\item[{\rm(i)}] We propose three nonlinear splitting algorithms with momentum for solving the monotone inclusion problem \eqref{ABC}: the nonlinear semi-forward-reflected-backward splitting algorithm with momentum, the nonlinear semi-reflected-forward-backward splitting algorithm with momentum, and the nonlinear outer reflected forward-backward splitting algorithm with momentum. Weak convergence of all three algorithms is established under suitable parameter conditions. Furthermore, under strong monotonicity assumptions, $R$-linear convergence rates are proved for Algorithms \ref{algorithm2}, \ref{algorithm3}, and \ref{algorithm4}, respectively.
	\item[{\rm(ii)}] The proposed framework demonstrates strong generality and flexibility:  several existing algorithms can be recovered as special cases.  Beyond unifying known methods, the introduction momentum terms enables the design of new splitting schemes for monotone inclusions involving sums of four operators.

	\item[{\rm(iii)}] The performance of the proposed algorithm is comprehensively validated through applications to two representative problems: constrained nonlinear optimization and portfolio optimization. Numerical experiments confirm their efficiency and robustness compared with existing approaches.
\end{itemize}

This paper is organized as follows. Section 2 reviews essential fundamental definitions and lemmas required for the subsequent analysis. In Section 3, we introduce three novel splitting algorithms with momentum for solving the problem \eqref{ABC}: the nonlinear semi-forward-reflected-backward splitting algorithm with momentum, the nonlinear semi-reflected-forward-backward splitting algorithm with momentum, and the nonlinear outer reflected forward-backward splitting algorithm with momentum. We establish their weak convergence under standard assumptions and prove linear convergence under additional conditions of strong monotonicity. Section 5 presents numerical experiments on synthetic datasets and real-world quadratic programming problems from portfolio optimization.  Finally, we give some conclusions.

	\section{ Preliminaries}
	{
In this paper,  $\mathcal{H}$ and $\mathcal{G}$ denote real Hilbert spaces endowed with the inner product  $\langle \cdot, \cdot\rangle$ and let associated  norm $\|\cdot\|$. Let $\mathbb{N}$ be the set of nonnegative integers and $\mathbb{N}_0$ the set of positive integers. The symbols $``\rightarrow"$ and $``\rightharpoonup"$ represent strong convergence and weak convergence, respectively. The identity operator is denoted by $\Id$.  Let $L: \mathcal{H} \rightarrow \mathcal{G}$ be a bounded linear operator with adjoint $L^*: \mathcal{G} \rightarrow \mathcal{H}$ and operator norm $\|L\|$. An operator $S$ is said to be self-adjoint if $S^* = S$, and strongly monotone if there exists a constant $c \in (0, +\infty)$ such that
\begin{equation*}
\langle Sx | x \rangle \geq c \|x\|^2, \quad \forall x \in \mathcal{H}.
\end{equation*}
Define  the set of strongly monotone self-adjoint operators on $\mathcal{H}$ as
$$\mathcal{P}(\mathcal{H})=\{S:\mathcal{H} \rightarrow \mathcal{H}|S \ \hbox{is linear, self-adjoint and strongly monotone}\}.$$
If $S \in \mathcal{P}(\mathcal{H})$, then $S$ is invertible and its inverse $S^{-1}$ also belongs to $\mathcal{P}(\mathcal{H}).$  The inner product induced by $S$ is defined as $\langle x , y  \rangle_S := \langle S x,y \rangle$, and the corresponding norm is denoted by $\|\cdot\|_S$.  Note that $S =S^{\frac{1}{2}}\circ S^{\frac{1}{2}},$ where $S^{\frac{1}{2}}$ is a self-adjoint, strongly monotone linear operator. Consequently, the Cauchy–Schwarz inequality extends naturally to the norms induced by $S$ and $S^{-1},$ taking the following form:
\begin{equation*}
|\langle x,u \rangle| = |\langle S^{-\frac{1}{2}}x,S^{\frac{1}{2}}u \rangle|\leq  \| S^{-\frac{1}{2}}x\| \|S^{\frac{1}{2}}u \| =
\|x\|_{S^{-1}}\|u\|_S, \quad \forall (x,u) \in \mathcal{H}\times\mathcal{H}.
\end{equation*}
Moreover, the following identity holds:
\begin{equation*}
2\langle x - y | y - z\rangle_S = \|x - z\|_S^2 - \|x - y\|_S^2 - \|y - z\|_S^2.
\end{equation*}

\begin{definition}
	\rm
	Let $A : \mathcal{H} \rightarrow 2^\mathcal{H}$ be a set-valued operator, we have the following notations:
	\begin{itemize}
		\item[{\rm(i)}] the  graph of $A$ is defined by $\gra (A) = \{(x, y) \in  \mathcal{H}^2 \ | \ y \in Ax\}$;
		
		\item[{\rm(ii)}] the set of zeros of $A$ is defined by $\zer (A) = \{x \in \mathcal{H}\ | \ 0 \in Ax\} =A^{-1}(0);$
		
		\item[{\rm(iii)}] the domain of $A$ is defined by $\dom (A) = \{x \in \mathcal{H}\ | Ax \neq \emptyset \};$
		\item[{\rm(iv)}] the  resolvent of $A$ with $\gamma >0$ is defined by
		$$
		J_{\gamma A} = (\Id + \gamma A)^{-1}.
		$$
	\end{itemize}
	Let $x,y\in\mathcal{H}$. Then
	\begin{equation*}
		\label{Ros}
		y\in J_{\gamma A}x\Longleftrightarrow x-y\in \gamma Ay.
	\end{equation*}
\end{definition}

\begin{definition}{\rm(\cite{BC2011}\ )}
			{\rm
			Let $A: \mathcal{H} \rightarrow 2^{\mathcal{H}}$ be	a set-valued mapping,
				 $A: \mathcal{H} \rightarrow 2^{\mathcal{H}}$ is said to be
				\begin{itemize}
					\item[(i)]  monotone if $\langle u-v,x-y \rangle \geq 0$ for all $(x,u),(y,v) \in \gra(A)$.
					\item[(ii)]  maximally monotone if there exists no monotone operator $B: \mathcal{H} \rightarrow 2^{\mathcal{H}}$ such that gra$(B)$ properly contains gra$(A),$ i.e., for every $(x,u) \in \mathcal{H}\times\mathcal{H},$
					$$
					(x,u) \in \gra (A)  \ \ \Leftrightarrow \ \ \langle u-v , x-y \rangle \geq 0,  \ \  \forall(y,v)\in \gra (A).
					$$
				\end{itemize}
			}
		\end{definition}
		\begin{definition}
			{\rm
				A single-valued operator  $T: \mathcal{H} \rightarrow {\mathcal{H}}$ is said to be
				\begin{itemize}
					\item[(i)]  $L$-Lipschitz continuous w.r.t. $S$ if there exists a constant $ L > 0$, such that
					$$
					\|Tx-Ty\|_{S^{-1}}\leq  L\|x-y\|_S,\quad\forall x, y\in \mathcal{H}.
					$$
					\noindent
					\item[(ii)]  $\beta^{-1}$-cocoercive operator w.r.t. $S$ if there exists a constant $ \beta > 0$, such that
					$$
					\langle Tx-Ty,x-y \rangle \geq  \beta^{-1}\|Tx-Ty\|_{S^{-1}}^2,\quad\forall x, y\in \mathcal{H}.
					$$
				\end{itemize}
			}
		\end{definition}
\begin{lemma}{\rm(\cite{{Morin}}\ )}\label{lem0}
Let $C$ be the $\beta^{-1}$-cocoercive operator w.r.t. $S$, for some $\beta>0$. Then the following inequality holds:
\begin{equation*}
\langle Cx - Cy, z - y\rangle\geq-\frac{\beta}{4}\|z-x\|_{S}^{2},\quad\forall x,y,z\in \mathcal{H}.
\end{equation*}
\end{lemma}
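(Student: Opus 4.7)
The plan is to expand the inner product about the point $x$ and then combine cocoercivity with the weighted Cauchy--Schwarz inequality recorded in the preliminaries. First, I would write
\begin{equation*}
\langle Cx - Cy, z - y\rangle = \langle Cx - Cy, x - y\rangle + \langle Cx - Cy, z - x\rangle,
\end{equation*}
which isolates the term that cocoercivity controls directly.

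For the first piece, the cocoercivity of $C$ with respect to $S$ immediately gives
\begin{equation*}
\langle Cx - Cy, x - y\rangle \geq \beta^{-1}\|Cx - Cy\|_{S^{-1}}^{2}.
\end{equation*}
For the second piece, the Cauchy--Schwarz inequality adapted to the $S$, $S^{-1}$ norms stated in the preliminaries yields
\begin{equation*}
\langle Cx - Cy, z - x\rangle \geq -\|Cx - Cy\|_{S^{-1}}\, \|z - x\|_{S}.
\end{equation*}

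Adding these two lower bounds, it remains to verify the elementary inequality
\begin{equation*}
\beta^{-1}p^{2} - pq + \tfrac{\beta}{4} q^{2} \geq 0,
\end{equation*}
where $p := \|Cx - Cy\|_{S^{-1}}$ and $q := \|z - x\|_{S}$. The main observation is that this quadratic is a perfect square:
\begin{equation*}
\beta^{-1}p^{2} - pq + \tfrac{\beta}{4}q^{2} = \Bigl(\tfrac{p}{\sqrt{\beta}} - \tfrac{\sqrt{\beta}}{2}\, q\Bigr)^{\!2} \geq 0.
\end{equation*}
Rearranging produces $\langle Cx - Cy, z - y\rangle \geq -\tfrac{\beta}{4}\|z - x\|_{S}^{2}$, as required.

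There is essentially no obstacle here; the only mild subtlety is being careful to apply the weighted Cauchy--Schwarz inequality with $S$ on the $z-x$ factor and $S^{-1}$ on the $Cx-Cy$ factor so that the constants match the cocoercivity bound precisely, which is what makes the discriminant vanish and produces the sharp constant $\beta/4$.
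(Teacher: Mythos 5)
Your proof is correct, and it is essentially the standard argument behind this lemma: the paper itself does not reproduce a proof (it simply cites \cite{Morin}), and the splitting $\langle Cx-Cy,\,z-y\rangle=\langle Cx-Cy,\,x-y\rangle+\langle Cx-Cy,\,z-x\rangle$ followed by cocoercivity, the weighted Cauchy--Schwarz inequality, and the perfect-square (Young) estimate $\beta^{-1}p^{2}-pq+\tfrac{\beta}{4}q^{2}\geq 0$ is exactly the argument used in that reference. No gaps.
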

		
\begin{lemma}{\rm(\cite{BC2011}\ )}\label{lem3}
Let $A: \mathcal{H} \rightarrow 2^{\mathcal{H}}$ be maximally monotone. Then $\gra(A)$ is sequentially closed in $\mathcal{H}^{weak}\times\mathcal{H}^{strong}$, i.e., for every sequence $(x_{k},y_{k})_{k \in \mathbb{N}}$ in $\gra(A)$ and $(x,y)\in \mathcal{H}\times\mathcal{H}$, if $x_{k}\rightharpoonup x$ and $y_{k}\rightarrow y$, then $(x,y) \in \gra(A)$. \vskip 1mm
\end{lemma}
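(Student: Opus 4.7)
The plan is to exploit the maximality characterization of $A$: to show $(x,y) \in \gra(A)$, it suffices to check that $\langle y - v, x - u\rangle \geq 0$ for every $(u,v) \in \gra(A)$, and then invoke maximal monotonicity to upgrade this variational inequality to membership in the graph. So the whole argument will be driven by passing to the limit in the monotonicity inequality along the sequence $(x_k, y_k)$.

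First I would fix an arbitrary $(u,v) \in \gra(A)$ and, using that $(x_k,y_k) \in \gra(A)$ together with monotonicity of $A$, write the estimate $\langle y_k - v, x_k - u\rangle \geq 0$ valid for every $k$. The main technical step is then the limit computation, which I would handle by decomposing
\begin{equation*}
\langle y_k - v, x_k - u\rangle = \langle y_k - y, x_k - u\rangle + \langle y - v, x_k - u\rangle.
\end{equation*}
For the second summand, weak convergence $x_k \rightharpoonup x$ directly yields $\langle y - v, x_k - u\rangle \to \langle y - v, x - u\rangle$. For the first summand, I would use Cauchy--Schwarz together with the boundedness of $(x_k)$ (which follows from weak convergence) and the strong convergence $y_k \to y$, so that $|\langle y_k - y, x_k - u\rangle| \leq \|y_k - y\|\,\|x_k - u\| \to 0$.

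Combining the two limits gives $\langle y - v, x - u \rangle \geq 0$ for every $(u,v) \in \gra(A)$. Finally, I would apply the definition of maximal monotonicity recalled in the preliminaries: since no proper monotone extension of $A$ exists, the variational inequality above forces $(x,y) \in \gra(A)$, which concludes the proof. The only subtle point is the first summand in the decomposition, where one really needs the strong convergence of $(y_k)$ rather than just weak convergence, because $x_k - u$ only converges weakly and the pairing of two weakly convergent sequences is generally not continuous; this is the step where the hypothesis on $\mathcal{H}^{\mathrm{weak}} \times \mathcal{H}^{\mathrm{strong}}$ closure (rather than $\mathcal{H}^{\mathrm{weak}} \times \mathcal{H}^{\mathrm{weak}}$) is essential.
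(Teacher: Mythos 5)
Your proof is correct and is essentially the standard argument for this result (the paper itself only cites \cite{BC2011}, where the same weak--strong limit passage in the monotonicity inequality, followed by the maximality characterization, is used). The decomposition $\langle y_k-v,x_k-u\rangle=\langle y_k-y,x_k-u\rangle+\langle y-v,x_k-u\rangle$, with boundedness of $(x_k)$ from weak convergence handling the first term, is exactly the right way to justify the limit, and your closing remark correctly identifies why strong convergence of $(y_k)$ is indispensable.
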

\begin{lemma}\label{lem1}{\rm(\cite{BC2011}\ )}
Let $D$ be a nonempty set of $\mathcal{H}$, and $\{x_{k}\}_{k \in \mathbb{N}}$ be a sequence in $\mathcal{H}$.  If the following conditions hold:
\begin{itemize}
\item[{\rm(i)}]  For every $x \in D$, $\lim\limits_{k\rightarrow \infty}\|x_{k}-x\|$ exists;
\noindent
\item[{\rm(ii)}] Every weak sequential cluster point of $\{x_{k}\}_{k \in \mathbb{N}}$ belongs to $D$.
\end{itemize}
Then the sequence $\{x_{k}\}_{n \in \mathbb{N}}$ converges weakly to a point in $D$.
\end{lemma}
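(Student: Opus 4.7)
The plan is to verify the standard two-part argument that establishes convergence of a Fej\'er-type sequence: first produce weak cluster points, then show there is exactly one such cluster point.

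First I would extract boundedness directly from hypothesis (i). Fixing an arbitrary $x_{0}\in D$, the real sequence $\|x_{k}-x_{0}\|$ converges and is therefore bounded, which forces $\{x_{k}\}_{k\in\mathbb{N}}$ to be bounded in $\mathcal{H}$. Since $\mathcal{H}$ is a Hilbert space, hence reflexive, the Banach--Alaoglu/Eberlein--\v Smulian theorem guarantees that $\{x_{k}\}$ admits at least one weak sequential cluster point, and by hypothesis (ii) every such cluster point lies in $D$.

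The substantive step is uniqueness of the weak cluster point; this is the classical Opial trick and the only part that is not purely soft. Suppose $u,v\in D$ are two weak cluster points with subsequences $x_{k_{n}}\rightharpoonup u$ and $x_{k_{m}}\rightharpoonup v$. I would expand
\begin{equation*}
\|x_{k}-u\|^{2}-\|x_{k}-v\|^{2}=2\langle x_{k},v-u\rangle+\|u\|^{2}-\|v\|^{2}.
\end{equation*}
By (i) both terms on the left-hand side converge in $\mathbb{R}$, so the full sequence $\langle x_{k},v-u\rangle$ converges to a limit $\ell\in\mathbb{R}$. Passing to the two subsequences and using weak convergence gives $\ell=\langle u,v-u\rangle=\langle v,v-u\rangle$, hence $\langle u-v,v-u\rangle=0$, i.e.\ $\|u-v\|^{2}=0$ and $u=v$.

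Finally I would close the argument by observing that a bounded sequence in a Hilbert space with a unique weak sequential cluster point converges weakly to that point: otherwise there would exist $\varphi\in\mathcal{H}^{*}$, $\varepsilon>0$ and a subsequence with $|\varphi(x_{k_{j}}-u)|\geq\varepsilon$, which, by boundedness, would itself admit a weakly convergent further subsequence whose limit is forced by (ii) to lie in $D$ and by the previous step to equal $u$, a contradiction. The main obstacle is the uniqueness argument above; the remaining steps are essentially boundedness plus reflexivity and require no further assumption on $D$ beyond the hypotheses.
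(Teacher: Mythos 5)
Your argument is correct and is precisely the classical Opial-type proof that the cited reference (Bauschke--Combettes, Lemma~2.39) uses for this result, which the paper itself states without proof: boundedness from (i), existence of weak cluster points by reflexivity, uniqueness via the identity $\|x_{k}-u\|^{2}-\|x_{k}-v\|^{2}=2\langle x_{k},v-u\rangle+\|u\|^{2}-\|v\|^{2}$, and the standard subsequence argument to upgrade a unique cluster point to weak convergence. The only (cosmetic) issue is that you reuse the symbol $x_{0}$ both for the first term of the sequence and for an arbitrary point of $D$; pick a different letter for the latter.
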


\section{Main algorithms and convergence analysis}
In this section, we propose three novel splitting algorithms for solving the monotone inclusion problem \eqref{ABC}: the nonlinear semi-forward-reflected-backward splitting (SFRBS) algorithm, the nonlinear semi-reflected-forward-backward splitting (SRFBS) algorithm, and the nonlinear outer reflected forward-backward splitting (ORFBS) algorithm. The key assumptions required for their convergence analyzes are presented below.
			\begin{assumption}
				\label{assumption1}
				
				Assume that
				\begin{itemize}
					\item[{\rm(i)}]  $A:\mathcal{H} \rightarrow 2^{\mathcal{H}}$ is  maximally monotone.
					
					\item[{\rm(ii)}]   $B:\mathcal{H} \rightarrow \mathcal{H}$ is single-valued monotone and $\mu$-Lipschitz w.r.t. $S$, where $S \in \mathcal{P}(\mathcal{H}).$

					\item[{\rm(iii)}]  $C:\mathcal{H}\rightarrow \mathcal{H}$ is $\beta^{-1}$-cocoercive w.r.t. $S$, where $S \in \mathcal{P}(\mathcal{H}),$ for some $\beta>0$.
					
					\item[{\rm(iv)}]   The solution set of \eqref{ABC}, denoted by,
					$$
					\zer(A+B+C):=\{x \in \mathcal{H}:0 \in Ax+Bx+Cx\}
					$$
					is nonempty.
                  \item[{\rm(iv)}] The nonlinear kernel $M_k : \mathcal{H} \rightarrow \mathcal{H}$ be such that  $\gamma_k M_k - S$ is $L_k$-Lipschitz continuous w.r.t. $S$, for some $L_k \in [0,1)$, $\gamma_k\geq \gamma$ for some $\gamma>0$, $\forall k \in \mathbb{N}.$
				\end{itemize}
			\end{assumption}
\begin{proposition}{\rm(\cite{Morin}\ )}
Let Assumption \ref{assumption1} (iv) hold. Then, for all $k \in \mathbb{N}$, the nonlinear operators $M_k$ are $2\gamma^{-1}$-Lipschitz continuous w.r.t. $S$, maximally monotone, and strongly monotone w.r.t. $S$.
\end{proposition}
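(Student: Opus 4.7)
The plan is to exploit the single hypothesis that $\gamma_k M_k - S$ is $L_k$-Lipschitz continuous w.r.t.\ $S$ with $L_k \in [0,1)$, combined with two tools already recorded in the preliminaries: the identity $\|Sz\|_{S^{-1}} = \|z\|_S$, which follows from $\|Sz\|_{S^{-1}}^2 = \langle S^{-1}(Sz), Sz\rangle = \langle z, Sz\rangle = \|z\|_S^2$, and the generalized Cauchy--Schwarz inequality $|\langle u,v\rangle| \leq \|u\|_{S^{-1}}\|v\|_S$. Everything reduces to linear algebra in these weighted norms.

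First I would establish the Lipschitz estimate. Writing $\gamma_k M_k = (\gamma_k M_k - S) + S$ and applying the triangle inequality in the $S^{-1}$-norm gives
\begin{equation*}
\gamma_k\|M_k x - M_k y\|_{S^{-1}} \leq \|(\gamma_k M_k - S)x - (\gamma_k M_k - S)y\|_{S^{-1}} + \|S(x-y)\|_{S^{-1}} \leq (1+L_k)\|x-y\|_S.
\end{equation*}
Dividing by $\gamma_k$ and using $\gamma_k \geq \gamma$ together with $L_k < 1$ yields $\|M_k x - M_k y\|_{S^{-1}} \leq 2\gamma^{-1}\|x-y\|_S$, which is the claimed Lipschitz bound.

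Next I would derive strong monotonicity by pairing the displacement of $\gamma_k M_k - S$ with $x-y$. The generalized Cauchy--Schwarz inequality combined with the Lipschitz hypothesis gives
\begin{equation*}
\langle (\gamma_k M_k - S)x - (\gamma_k M_k - S)y, \, x-y\rangle \geq -L_k\|x-y\|_S^2.
\end{equation*}
Expanding the inner product on the left and using $\langle S(x-y), x-y\rangle = \|x-y\|_S^2$ leads to
\begin{equation*}
\langle M_k x - M_k y, \, x-y\rangle \geq \frac{1-L_k}{\gamma_k}\|x-y\|_S^2,
\end{equation*}
which is strictly positive since $L_k < 1$ and $\gamma_k > 0$. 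This inequality simultaneously yields (plain) monotonicity of $M_k$ and the desired strong monotonicity w.r.t.\ $S$.

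Finally, maximal monotonicity follows from a classical criterion: a monotone, single-valued, hemicontinuous operator that is everywhere defined on $\mathcal{H}$ is maximally monotone. Lipschitz continuity w.r.t.\ $S$ implies continuity in the norm topology because $\|\cdot\|$ and $\|\cdot\|_S$ are equivalent (a consequence of $S\in\mathcal{P}(\mathcal{H})$), and the standing assumption that $\gamma_k M_k - S$ acts on all of $\mathcal{H}$ forces $\dom M_k = \mathcal{H}$. The only mild obstacle throughout is the notational bookkeeping with the weighted norms $\|\cdot\|_S$ and $\|\cdot\|_{S^{-1}}$ and the correct use of the extended Cauchy--Schwarz inequality; once these are handled the three conclusions are immediate.
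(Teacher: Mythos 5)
Your proof is correct. The paper itself states this proposition without proof, citing it from the reference \cite{Morin}, and your argument is exactly the standard one used there: decompose $\gamma_k M_k=(\gamma_k M_k-S)+S$, apply the triangle inequality and the weighted Cauchy--Schwarz inequality to get the Lipschitz bound $(1+L_k)/\gamma_k\leq 2\gamma^{-1}$ and the monotonicity modulus $(1-L_k)/\gamma_k>0$, and then invoke the classical ``monotone $+$ continuous $+$ full domain $\Rightarrow$ maximally monotone'' criterion, with continuity in the original norm following from the equivalence of $\|\cdot\|$ and $\|\cdot\|_S$ for $S\in\mathcal{P}(\mathcal{H})$. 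Nothing is missing.
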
		

\subsection{Nonlinear semi-forward-reflected-backward splitting algorithm with momentum}
In this subsection, we propose a nonlinear semi-forward-reflected-backward splitting (SFRBS) algorithm with momentum, denoted as Algorithm \ref{algorithm2},  and establish its convergence results.
\begin{algorithm}\label{algorithm2}
\hrule
\noindent\textbf{\footnotesize{Nonlinear SFRBS algorithm with momentum}}
\hrule
\vskip 1mm
\noindent Let $S \in \mathcal{P}(\mathcal{H})$, $M_k: \mathcal{H} \rightarrow \mathcal{H}$,  $\gamma_k > 0$ $(\forall k \in\mathbb{N})$ and initial point $x_{0}, \ u_{0} \in \mathcal{H}.$ For $k=0,1,\cdots,$ do \\
\begin{equation*}
\label{algorithm_2}
\aligned
&x_{k+1}=(M_k+A)^{-1}(M_kx_k-2Bx_k+Bx_{k-1}-Cx_k+\gamma_k^{-1}u_k), \\
&u_{k+1}=(\gamma_k M_k-S)x_{k+1}-(\gamma_k M_k-S)x_k.
\endaligned
\end{equation*}
				\vskip 1mm
				
				\hrule
				
				\hspace*{\fill}
			\end{algorithm}
For the iterates $\{x_k\}_{k \in \mathbb{N}}$ and $\{u_k\}_{k \in \mathbb{N}}$ generated by Algorithm \ref{algorithm2} and $ \forall x \in \zer(A+B+C),$ we define the following function,
\begin{equation}\label{Psik}
\begin{aligned}
\Psi_{k}(x):=&\|x_k-x\|_{S}^2+2\langle u_k, x_k-x\rangle +(\gamma_k\mu+L_{k-1})\|x_k-x_{k-1}\|_S^2 \\
&+2\gamma_k\langle Bx_k-Bx_{k-1},x-x_k\rangle,
\end{aligned}
\end{equation}
which helps to establish the weak convergence of the Algorithm \ref{algorithm2}.		
\begin{lemma}
\label{lemma2}
Suppose that Assumption {\rm\ref{assumption1}} holds. Then for  $\{x_k\}_{k \in \mathbb{N}}$ and $\{u_k\}_{k \in \mathbb{N}}$ generated by Algorithm \ref{algorithm2} and  $\forall x^* \in \zer(A+B+C),$
\begin{equation}
\Psi_{k+1}(x^*) \leq \Psi_{k}(x^*)-(1- L_{k-1}-L_k-\gamma_k\mu-\gamma_{k+1}\mu-\frac{\gamma_k\beta}{2})\|x_{k+1}-x_k\|_S^2 \label{17}
\end{equation}
holds for all $n \in \mathbb{N}_0.$
\end{lemma}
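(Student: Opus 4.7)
The plan is to derive \eqref{17} by extracting an element of $Ax_{k+1}$ from the warped-resolvent step, pairing it with $-Bx^*-Cx^*\in Ax^*$ via monotonicity of $A$, and then redistributing the resulting inner products so that the pieces of $\Psi_k(x^*)$ and $\Psi_{k+1}(x^*)$ appear on the appropriate sides while every residual collapses onto $\|x_{k+1}-x_k\|_S^2$ with the stated coefficient.

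First I would translate the update rule through the definition of the warped resolvent to obtain $M_k x_k - 2Bx_k + Bx_{k-1} - Cx_k + \gamma_k^{-1}u_k - M_k x_{k+1} \in A x_{k+1}$. Multiplying by $\gamma_k$, writing $\gamma_k M_k = S+(\gamma_k M_k-S)$, and invoking the defining relation $(\gamma_k M_k-S)x_{k+1}-(\gamma_k M_k-S)x_k=u_{k+1}$ gives
\begin{equation*}
S(x_k-x_{k+1})+u_k-u_{k+1}-2\gamma_k Bx_k+\gamma_k Bx_{k-1}-\gamma_k Cx_k\in \gamma_k Ax_{k+1}.
\end{equation*}
Pairing this inclusion with $-Bx^*-Cx^*\in Ax^*$ by monotonicity of $A$, multiplying by $2$, and applying the polarization identity $2\langle S(x_k-x_{k+1}),x_{k+1}-x^*\rangle=\|x_k-x^*\|_S^2-\|x_k-x_{k+1}\|_S^2-\|x_{k+1}-x^*\|_S^2$ yields a preliminary descent estimate in which only three cross terms remain to be controlled.

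For the $u$-term I would split $2\langle u_k-u_{k+1},x_{k+1}-x^*\rangle = 2\langle u_k, x_k-x^*\rangle - 2\langle u_{k+1},x_{k+1}-x^*\rangle + 2\langle u_k, x_{k+1}-x_k\rangle$ and bound the last summand via the Lipschitz hypothesis on $\gamma_{k-1}M_{k-1}-S$, which gives $\|u_k\|_{S^{-1}}\leq L_{k-1}\|x_k-x_{k-1}\|_S$; combined with the paired Cauchy--Schwarz inequality and Young's inequality this contributes $L_{k-1}(\|x_k-x_{k-1}\|_S^2+\|x_{k+1}-x_k\|_S^2)$. For the reflected $B$-term I plan the Malitsky--Tam-style decomposition
\begin{equation*}
Bx^*-2Bx_k+Bx_{k-1}=(Bx^*-Bx_{k+1})+(Bx_{k+1}-Bx_k)-(Bx_k-Bx_{k-1}),
\end{equation*}
whose first summand is killed by monotonicity of $B$; whose second, paired with $x_{k+1}-x^*$ and scaled by $2\gamma_k$, becomes $-2\gamma_k\langle Bx_{k+1}-Bx_k,x^*-x_{k+1}\rangle$ and is matched against the corresponding summand of $\Psi_{k+1}(x^*)$; and whose third, after splitting $x_{k+1}-x^*=(x_{k+1}-x_k)+(x_k-x^*)$, feeds $2\gamma_k\langle Bx_k-Bx_{k-1},x^*-x_k\rangle$ into $\Psi_k(x^*)$ while the remainder is bounded using $\mu$-Lipschitzness of $B$ and Young's inequality, contributing $\gamma_k\mu(\|x_k-x_{k-1}\|_S^2+\|x_{k+1}-x_k\|_S^2)$. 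Finally, Lemma \ref{lem0} applied with $(x,y,z)=(x_k,x^*,x_{k+1})$ furnishes $2\gamma_k\langle Cx^*-Cx_k, x_{k+1}-x^*\rangle \leq \tfrac{\gamma_k\beta}{2}\|x_{k+1}-x_k\|_S^2$.

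Assembling these bounds, the groupings $\|x_k-x^*\|_S^2+2\langle u_k,x_k-x^*\rangle+(\gamma_k\mu+L_{k-1})\|x_k-x_{k-1}\|_S^2+2\gamma_k\langle Bx_k-Bx_{k-1},x^*-x_k\rangle$ and $\|x_{k+1}-x^*\|_S^2+2\langle u_{k+1},x_{k+1}-x^*\rangle+(\gamma_{k+1}\mu+L_k)\|x_{k+1}-x_k\|_S^2+2\gamma_{k+1}\langle Bx_{k+1}-Bx_k,x^*-x_{k+1}\rangle$ appear exactly as $\Psi_k(x^*)$ and $\Psi_{k+1}(x^*)$, and collecting the residual coefficients on $\|x_{k+1}-x_k\|_S^2$ produces the claimed $1-L_{k-1}-L_k-\gamma_k\mu-\gamma_{k+1}\mu-\gamma_k\beta/2$ (where the $L_k$ and $\gamma_{k+1}\mu$ contributions enter through the $\Psi_{k+1}$ grouping rather than through a direct bound). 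The main obstacle I anticipate is the sign-and-step-size bookkeeping around the central piece $Bx_{k+1}-Bx_k$: it must be aligned precisely with the $\gamma_{k+1}$-weighted momentum term in $\Psi_{k+1}$ without leaking into stray $\|x_{k+1}-x^*\|_S$ contributions that would break the telescoping structure. Once that alignment is secured, the remainder of the argument is routine coefficient collection.
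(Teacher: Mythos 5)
Your proposal is correct and follows essentially the same route as the paper's proof: the same inclusion from the warped resolvent, the same monotonicity pairing with $-(B+C)x^*\in Ax^*$, the identity \eqref{22}, the bound $\|u_k\|_{S^{-1}}\leq L_{k-1}\|x_k-x_{k-1}\|_S$ with Young's inequality, the Malitsky--Tam decomposition of the reflected $B$-term with its first summand absorbed by monotonicity, Lemma \ref{lem0} for the cocoercive part, and the final regrouping in which the $L_k$ and $\gamma_{k+1}\mu$ terms are added through $\Psi_{k+1}$. The one bookkeeping issue you flag---matching the derived coefficient $\gamma_k$ on $\langle Bx_{k+1}-Bx_k,x^*-x_{k+1}\rangle$ against the $\gamma_{k+1}$ appearing in $\Psi_{k+1}$---is left implicit in the paper's own proof as well (it is harmless when $\gamma_k$ is constant), so it does not count against you.
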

\begin{proof}
From Algorithm \ref{algorithm2} we know that
$$x_{k+1}=(M_k+A)^{-1}(M_kx_k-2Bx_k+Bx_{k-1}-Cx_k+\gamma_k^{-1}u_k),$$
which is equivalent to the inclusion
\begin{equation}
M_kx_k-M_kx_{k+1}-2Bx_k+Bx_{k-1}-Cx_k+\gamma_k^{-1}u_k \in Ax_{k+1}.     \label{18}
\end{equation}
Since $x^* \in \zer(A+B+C),$ we have
\begin{equation}
-(B+C)x^* \in Ax^*.                                 \label{19}
\end{equation}
Combining \eqref{18}, \eqref{19} and the monotonicity of $A$, we obtain
\begin{equation}
0 \leq \langle M_kx_k-M_kx_{k+1}-2Bx_k+Bx_{k-1}-Cx_k+\gamma_k^{-1}u_k+(B+C)x^* , x_{k+1}-x^* \rangle. 					\label{20}
\end{equation}
Multiplying both sides by $2\gamma_k$ and substituting the definition of $u_{k+1}$ yields
\begin{equation}
\aligned
0& \leq 2\langle Sx_k-Sx_{k+1}+u_k-u_{k+1}-2\gamma_k Bx_k+\gamma_k Bx_{k-1}-\gamma_k Cx_k+\gamma_k(B+C)x^* , x_{k+1}-x^* \rangle\\
& = 2\langle Sx_k-Sx_{k+1}, x_{k+1}-x^* \rangle +2\langle u_k-u_{k+1}, x_{k+1}-x^* \rangle-2\gamma_k \langle Bx_k-Bx^*, x_{k+1}-x^*\rangle \\
& \quad +2\gamma_k \langle Bx_k-Bx_{k-1},x^*-x_k\rangle+2\gamma_k \langle Bx_k-Bx_{k-1},x_k-x_{k+1}\rangle \\
& \quad -2\gamma_k\langle Cx_k-Cx^*,x_{k+1}-x^*\rangle. 	
\endaligned
\label{21}
\end{equation}
In the following, we analyze each term on the right-hand side of the inequality one by one. Firstly, using the identity $2\langle x - y | y - z\rangle_S = \|x - z\|_S^2 - \|x - y\|_S^2 - \|y - z\|_S^2$, we have
				\begin{equation}
					\begin{aligned}
					   & 2\langle Sx_k -Sx_{k+1}, x_{k+1}-x^*\rangle \\
                       =&2\langle x_k-x_{k+1},x_{k+1}-x^*\rangle_S \\
                       =&\|x_k-x^*\|_S^2-\|x_{k+1}-x^*\|_S^2-\|x_{k+1}-x_k\|_S^2.
					\end{aligned}  \label{22}
				\end{equation}
Secondly, by Assumption \ref{assumption1} and the definition of $u_k$, for all $k \in \mathbb{N}$,
				\begin{equation}
					\begin{aligned}
						& 2\langle u_k-u_{k+1}, x_{k+1}-x^*\rangle \\
                        =&2\langle u_k,x_k-x^*\rangle+2\langle u_k,x_{k+1}-x_k\rangle-2\langle u_{k+1},x_{k+1}-x^*\rangle  \\
                        \leq & 2\langle u_k,x_k-x^*\rangle-2\langle u_{k+1},x_{k+1}-x^*\rangle+L_{k-1}\|x_k-x_{k-1}\|_S^2 \\
                        &+L_{k-1}\|x_{k+1}-x_k\|_S^2.
					\end{aligned}  \label{23}
				\end{equation}
Thirdly, since $B:\mathcal{H} \rightarrow \mathcal{H}$ is monotone and $\mu$-Lipschitz w.r.t. $S$, it follows that
\begin{equation}
\begin{aligned}
&\quad -2\gamma_k \langle Bx_k-Bx^*, x_{k+1}-x^*\rangle+2\gamma_k \langle Bx_k-Bx_{k-1},x^*-x_k\rangle\\
&\quad +2\gamma_k \langle Bx_k-Bx_{k-1},x_k-x_{k+1}\rangle \\
&=2\gamma_k \langle Bx_{k+1}-Bx^*, x^*-x_{k+1}\rangle+ 2\gamma_k \langle Bx_k-Bx_{k+1}, x^*-x_{k+1}\rangle\\
& \quad + 2\gamma_k \langle Bx_k-Bx_{k-1},x^*-x_k\rangle+2\gamma_k \langle Bx_k-Bx_{k-1},x_k-x_{k+1}\rangle \\
& \leq  2\gamma_k \langle Bx_k-Bx_{k+1}, x^*-x_{k+1}\rangle+ 2\gamma_k \langle Bx_k-Bx_{k-1},x^*-x_k\rangle \\
& \quad +\gamma_k\mu \|x_k-x_{k-1}\|_S^2+\gamma_k\mu\|x_{k+1}-x_k\|_S^2.
\end{aligned}  \label{24}
\end{equation}
By Lemma \ref{lem0}, we have
\begin{equation}
-2\gamma_k\langle Cx_k-Cx^* , x_{k+1}-x^* \rangle \leq \frac{\gamma_k\beta}{2}\|x_{k+1}-x_k\|_S^2. \label{25}
\end{equation}
From \eqref{21}--\eqref{25}, it follows that
				\begin{equation*}
					\begin{aligned}
						& \quad\|x_{k+1}-x^*\|_S^2+2\langle u_{k+1}, x_{k+1}-x^*\rangle+2\gamma_k\langle Bx_{k+1}-Bx_k, x^*-x_{k+1}\rangle\\
						& \leq \|x_k-x^*\|_S^2 +2\langle u_k,x_k-x^*\rangle+(\gamma_k\mu+L_{k-1})\|x_k-x_{k-1}\|_S^2 \\
                        & \quad +2\gamma_k \langle Bx_k-Bx_{k-1},x^*-x_k\rangle -(1-L_{k-1}-\gamma_k\mu-\frac{\gamma_k\beta}{2})\|x_{k+1}-x_k\|_S^2.
					\end{aligned} 					\label{26}
				\end{equation*}		
Then, \eqref{17} follows from the definition of $\Psi_k(x)$ in \eqref{Psik}.
			\end{proof}

\begin{theorem}
\label{theorem2}
{\noindent
Suppose that Assumption {\rm\ref{assumption1}} holds and there exists an $\epsilon >0$ such that
\begin{equation}
1- L_{k-1}-L_k-\gamma_k\mu-\gamma_{k+1}\mu-\frac{\gamma_k\beta}{2}\geq \epsilon,		\label{27}
\end{equation}
holds for all $n \in \mathbb{N}_0$. Then the sequence $\{x_k\}_{k \in \mathbb{N}}$ generated by Algorithm \ref{algorithm2} converges weakly to a point $x^*$ in $\zer(A+B+C)$.
				}
\end{theorem}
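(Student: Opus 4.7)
The plan is to verify the two hypotheses of the Opial-type Lemma \ref{lem1} with $D=\zer(A+B+C)$. The engine driving both is the descent inequality of Lemma \ref{lemma2}, which under condition \eqref{27} telescopes into
\begin{equation*}
\Psi_{k+1}(x^*)+\epsilon\sum_{j=0}^{k}\|x_{j+1}-x_j\|_S^2\le \Psi_0(x^*)\qquad\forall x^*\in\zer(A+B+C).
\end{equation*}
To turn this into the information needed by Lemma \ref{lem1}, I would first rewrite $\Psi_k(x^*)$ by completing the square. Setting $w_k:=S^{-1}u_k+\gamma_k S^{-1}(Bx_{k-1}-Bx_k)$, a direct expansion using $\langle\cdot,\cdot\rangle_S=\langle S\cdot,\cdot\rangle$ gives
\begin{equation*}
\Psi_k(x^*)=\|x_k-x^*+w_k\|_S^2-\|w_k\|_S^2+(\gamma_k\mu+L_{k-1})\|x_k-x_{k-1}\|_S^2.
\end{equation*}

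Assumption \ref{assumption1}(iv) yields $\|u_k\|_{S^{-1}}\le L_{k-1}\|x_k-x_{k-1}\|_S$, and the $\mu$-Lipschitz continuity of $B$ gives $\|Bx_k-Bx_{k-1}\|_{S^{-1}}\le\mu\|x_k-x_{k-1}\|_S$, so $\|w_k\|_S\le(L_{k-1}+\gamma_k\mu)\|x_k-x_{k-1}\|_S$. Since \eqref{27} forces $L_{k-1}+\gamma_k\mu<1$, the square $\|w_k\|_S^2$ is dominated by $(\gamma_k\mu+L_{k-1})\|x_k-x_{k-1}\|_S^2$, so $\Psi_k(x^*)\ge\|x_k-x^*+w_k\|_S^2\ge 0$. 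Combined with the telescoped inequality, this yields summability $\sum_k\|x_{k+1}-x_k\|_S^2<\infty$, hence $\|x_{k+1}-x_k\|_S\to 0$, $\|u_{k+1}\|_{S^{-1}}\to 0$, $\|Bx_{k+1}-Bx_k\|_{S^{-1}}\to 0$, and $\|w_k\|_S\to 0$. The monotone decrease of $\Psi_k(x^*)$ together with the completed-square identity then forces $\lim_k\|x_k-x^*\|_S$ to exist for every $x^*\in\zer(A+B+C)$, which is condition (i) of Lemma \ref{lem1} and also gives boundedness of $\{x_k\}$.

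For condition (ii), let $x_{k_j}\rightharpoonup\bar x$ along a subsequence; since $\|x_{k+1}-x_k\|_S\to 0$, one also has $x_{k_j\pm 1}\rightharpoonup\bar x$. Adding $(B+C)x_{k+1}$ to both sides of the inclusion \eqref{18} produces
\begin{equation*}
r_{k+1}:=M_kx_k-M_kx_{k+1}+Bx_{k+1}-2Bx_k+Bx_{k-1}-Cx_k+Cx_{k+1}+\gamma_k^{-1}u_k\in(A+B+C)x_{k+1}.
\end{equation*}
Each piece of $r_{k+1}$ tends to zero in the $S^{-1}$ norm: $M_kx_k-M_kx_{k+1}$ by the $2\gamma^{-1}$-Lipschitz property from the proposition following Assumption \ref{assumption1}; the $B$ and $C$ differences by Lipschitz continuity of $B$ and of $C$ (which is $\beta$-Lipschitz because $\beta^{-1}$-cocoercive); and $\gamma_k^{-1}u_k$ by $\gamma_k\ge\gamma>0$ together with $\|u_k\|_{S^{-1}}\to 0$. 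Since $A$ is maximally monotone and $B+C$ is monotone and Lipschitz with full domain, Rockafellar's sum theorem ensures that $A+B+C$ is maximally monotone, so Lemma \ref{lem3} applied to the pairs $(x_{k_j+1},r_{k_j+1})\in\gra(A+B+C)$ delivers $0\in(A+B+C)\bar x$. Lemma \ref{lem1} then gives weak convergence of the full sequence $\{x_k\}$ to a point of $\zer(A+B+C)$.

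The main obstacle is the lower bound for the candidate Lyapunov quantity $\Psi_k(x^*)$: the cross terms $2\langle u_k,x_k-x^*\rangle$ and $2\gamma_k\langle Bx_k-Bx_{k-1},x^*-x_k\rangle$ have indefinite sign, and a naive Young-type splitting would leak a factor of $\|x_k-x^*\|_S^2$ that is incompatible with the $\|x_k-x_{k-1}\|_S^2$ coefficient already present in $\Psi_k$. The square-completion through the auxiliary vector $w_k$ bypasses this by moving the cross terms into a nonnegative square at the price of an error of order $\|w_k\|_S^2$, which the structural constraint $L_{k-1}+\gamma_k\mu<1$ baked into \eqref{27} allows to be absorbed; this is the one place where the upper bound $L_k<1$ in Assumption \ref{assumption1}(iv) is genuinely used.
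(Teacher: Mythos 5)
Your proposal is correct, and its overall architecture (telescoping the descent inequality of Lemma \ref{lemma2}, lower-bounding $\Psi_k$, extracting $\|x_{k+1}-x_k\|_S\to 0$, and identifying weak cluster points through the strongly vanishing selection $r_{k+1}\in(A+B+C)x_{k+1}$ plus Lemma \ref{lem3} and Lemma \ref{lem1}) coincides with the paper's proof; your $r_{k+1}$ is exactly the paper's $\Delta_k$. The one place where you genuinely diverge is the lower bound on $\Psi_k(x^*)$: you complete the square with $w_k=S^{-1}u_k+\gamma_k S^{-1}(Bx_{k-1}-Bx_k)$ and absorb $\|w_k\|_S^2\le(L_{k-1}+\gamma_k\mu)^2\|x_k-x_{k-1}\|_S^2$ into the $(\gamma_k\mu+L_{k-1})\|x_k-x_{k-1}\|_S^2$ term, which is valid. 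However, your stated motivation for this detour is mistaken: the ``naive'' Cauchy--Schwarz/Young splitting you dismiss is precisely what the paper does, and it causes no incompatibility --- the leaked $\|x_k-x_{k-1}\|_S^2$ terms are exactly cancelled by the coefficient already present in $\Psi_k$, and the leaked $\|x_k-x^*\|_S^2$ terms leave the coercive bound $\Psi_k(x^*)\ge(1-L_{k-1}-\gamma_k\mu)\|x_k-x^*\|_S^2>0$, which immediately yields boundedness of $\{x_k\}$. Your square-completion buys a cleaner nonnegativity statement but then needs the extra observation $w_k\to 0$ to conclude that $\lim_k\|x_k-x^*\|_S$ exists; the paper reaches the same conclusion from $\lim_k\Psi_k(x^*)=\lim_k\|x_k-x^*\|_S^2$ once the residual terms vanish. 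Either way the argument closes, so this is a stylistic rather than substantive difference.
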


\begin{proof}
By combining the definition of $\Psi_k(x)$ in \eqref{Psik}, the Lipschitz property of the operator $\gamma_kM_k-S, \ \forall k \in \mathbb{N}_0,$ and the Cauchy-Schwarz inequality, we deduce that
\begin{equation}
\begin{aligned}
\Psi_{k}(x^*)&=\|x_k-x^*\|_{S}^2+2\langle u_k, x_k-x^*\rangle +(\gamma_k\mu+L_{k-1})\|x_k-x_{k-1}\|_S^2\\
& \quad +2\gamma_k\langle Bx_k-Bx_{k-1},x-x_k\rangle  \\
& \geq \|x_k-x^*\|_{S}^2-L_{k-1}(\|x_k-x_{k-1}\|_S^2+\|x_k-x\|_S^2)+(\gamma_k\mu+L_{k-1})\|x_k-x_{k-1}\|_S^2 \\
& \quad -\gamma_k\mu\|x_k-x_{k-1}\|_S^2-\gamma_k\mu\|x_k-x\|_S^2\\
& = (1-L_{k-1}-\gamma_k\mu)\|x_k-x^*\|_S^2> 0.
\end{aligned}  \label{28}
\end{equation}
Hence, the sequence $\{\Psi_{k}(x^*)\}_{k \in \mathbb{N}}$ is nonnegative for all  $k \in \mathbb{N}.$  Since it is also nonincreasing, it follows that it converges. Let $N \in \mathbb{N}_0$. Summing both sides of \eqref{17} over $k = 1,2,\cdots,N$ yields
\begin{equation*}
\begin{aligned}
\sum_{k=1}^N(1- L_{k-1}-L_k-\gamma_k\mu-\gamma_{k+1}\mu-\frac{\gamma_k\beta}{2})\|x_{k+1}-x_k\|_S^2  &\leq \Psi_{1}(x^*)- \Psi_{N+1}(x^*)\\
& \leq \Psi_{1}(x^*) < + \infty.
\end{aligned}  \label{29}
\end{equation*}
Then it follows directly from \eqref{27}  that $\lim\limits_{k\rightarrow \infty}\|x_{k+1}-x_k\|_S=0$. Therefore, we have
\begin{equation*}
	\lim_{k\rightarrow \infty}\Psi_k(x^*)=\lim_{k\rightarrow \infty}\|x_k-x^*\|_S^2. \label{14}
\end{equation*}
This implies that the sequence $\{x_k\}_{k \in \mathbb{N}}$ is bounded. The sequence $\{x_k\}_{k \in \mathbb{N}}$ admits at least one weakly convergent subsequence $\{x_{k_n}\}_{n \in \mathbb{N}}$. Say, without loss of generality, that $x_{k_n} \rightharpoonup \bar{x} \in \mathcal{H}$ as $n\rightarrow \infty$.
Define the sequence $\{\Delta_k\}_{k\in \mathbb{N}}$ by
\begin{equation*}
	\label{delta}
	\Delta_k := M_kx_k-M_kx_{k+1}+(Bx_{k+1}-Bx_k)-(Bx_k-Bx_{k-1})+(Cx_{k+1}-Cx_k)+\gamma_k^{-1}u_k.
\end{equation*}
Then, by \eqref{18}, we have $(x_{k+1}, \Delta_k) \in \gra(A+B+C)$  for all $k \in \mathbb{N}.$ Since $M_k$ is $2\gamma^{-1}$-Lipschitz continuous w.r.t. $S$, $B$ is $\mu$-Lipschitz continuous w.r.t. $S$, and $C$ is $\beta^{-1}$-cocoercive w.r.t. $S$, it follows that
\begin{equation*}
	\aligned \| \Delta_k \|_{S^{-1}}
	 \leq& \|M_kx_k-M_kx_{k+1}\|_{S^{-1}}+\|(B+C)(x_k-x_{k+1})\|_{S^{-1}}+\|Bx_k-Bx_{k-1}\|_{S^{-1}}\\
	&+\gamma_k^{-1}\|u_k\|_{S^{-1}}  \\
	\leq&  (\frac{2}{\gamma}+\mu+\beta)\|x_k-x_{k+1}\|_S+(\mu+\frac{L_{k-1}}{\gamma_k})\|x_{k}-x_{k-1}\|_S.
	\endaligned \label{16}
\end{equation*}
By the arguments presented above, we have shown  that $\Delta_{k_n}\rightarrow 0.$ Furthermore, under the assumption that the operator  $B$ has a full domain, Corollary 25.5 (i) in \cite{BC2011} ensures the maximal monotonicity of  $A+B$. Next, combining Lemma 2.1 in \cite{Showalter} with the assumption that $C$ is cocoercive with respect to $S$, it follows that  $A+B+C$ is maximally monotone. Then, by Lemma \ref{lem3}, it follows that $(\bar{x} , 0) \in \gra(A+B+C)$, which is equivalent to  $\bar{x} \in \zer(A+B+C)$. Finally, by Lemma \ref{lem1},  the sequence $\{x_k\}_{k \in \mathbb{N}}$ converges weakly to a point in $\zer(A+B+C)$.
\end{proof}

\begin{remark}
\label{remark1}
\rm We now consider three special cases of Algorithm \ref{algorithm2}.
\begin{itemize}
\item[{\rm (i)}] When $B=0$, we have $\mu=0$. Then Algorithm \ref{algorithm2} reduces to the nonlinear forward-backward splitting with momentum proposed in \cite{Morin}, and the condition on $\gamma_k$ is recovered as $1-L_{k-1}-L_k-\frac{\gamma_k\beta}{2} \geq \epsilon.$
\item[{\rm (ii)}] When $S=\Id$, $\gamma_k \equiv \gamma$, and $M_k \equiv \frac{\Id}{\gamma}$, $\forall k \in \mathbb{N}$, we have $L_k=0$ and $u_k=0$, $\forall k \in \mathbb{N}$. In this case, Algorithm \ref{algorithm2} simplifies to the SFRBS algorithm \eqref{SFRBS}.  Furthermore, the upper bound on $\gamma$ coincides with that in \cite{Malitsky}, namely, $\gamma < \frac{2}{4\mu+\beta}.$
\item[{\rm (iii)}] Suppose that $A=A_1+A_2,$ where $A_1:\mathcal{H} \rightarrow 2^{\mathcal{H}}$ is maximally monotone, $A_2:\mathcal{H} \rightarrow 2^{\mathcal{H}}$ is $L$-Lipschitz continuous, and $A_1+A_2$ is maximally monotone. Furthermore, let $S=\Id,$ $\gamma_k\equiv \gamma$, and $M_k\equiv M=\frac{\Id}{\gamma}-A_2$, $\forall k \in \mathbb{N}$. In this case, Algorithm \ref{algorithm2} takes the form
\begin{equation}
	\label{factfrb}
		x_{k+1}=J_{\gamma A_1}(x_k-2\gamma A_2x_k-2\gamma Bx_k+\gamma Bx_{k-1}+\gamma A_2x_{k-1}-\gamma Cx_k).
\end{equation}
Since $\gamma M-\Id$ is $\gamma L$-Lipschitz continuous, condition \eqref{27} reduces to
\begin{equation*}
	1-2\gamma L-2\gamma\mu-\frac{\gamma \beta}{2}  \geq \epsilon.
\end{equation*}
In fact, Algorithm \eqref{factfrb} is equivalent to the SFRBS algorithm \eqref{SFRBS} for problem \eqref{ABC}. To see this, define the operator
\begin{equation*}
	D:=A_2+B.
\end{equation*}
Under these assumptions, $D$ is monotone and Lipschitz continuous. Substituting $D$ into \eqref{factfrb} yields  exactly the SFRBS iteration \eqref{SFRBS}, under the same step-size conditions. Moreover, Algorithm \eqref{factfrb} can be viewed as a four-operator splitting method for finding a zero of the sum of one maximally monotone operator, two Lipschitz continuous operators, and one cocoercive operator.
\end{itemize}
\end{remark}

\subsubsection{Linear convergence}
This subsection establishes the $R$-linear convergence of the sequence $\{x_k\}_{k \in \mathbb{N}}$ generated by Algorithm \ref{algorithm2}, under the assumption that the operator $A$ is strongly monotone. Specifically, $A$ is said to be $\rho$-strongly monotone w.r.t. to $S$ ($\rho > 0$) if
\begin{equation*}
	\forall (x,u) \in \gra (A), \forall (y,v) \in \gra (A) \ \Leftrightarrow \ \langle u - v, x - y \rangle \geq \rho \|x - y\|_S^2.
\end{equation*}
Notably, if $B$ is $\rho$-strongly monotone, the operators can be rewritten using the identity operator as $A + B = (A + \rho \Id) + (B - \rho \Id)$. This transformation preserves both monotonicity and Lipschitz continuity. Consequently, the $R$-linear convergence result can be directly extended to the case in which $B$ is strongly monotone, by an analogous proof argument.

\begin{theorem}
	\label{linear1}
	{
		\noindent
		Let Assumption {\rm\ref{assumption1}} hold, and assume that $A$ is $\rho$-strongly monotone. Suppose that condition \eqref{27} holds and there exists $\varepsilon_1 >0$ such that
		\begin{equation*}
t=\min \left \{\frac{2\gamma_k \rho}{1+\frac{L_k}{\varepsilon_1}+\gamma_{k+1}\mu},
\frac{\kappa}{\varepsilon_1 L_k+2\gamma_{k+1}\mu+L_k} \right\},
		\end{equation*}
		where $\kappa=(1-L_{k-1}-L_k-\gamma_k\mu-\gamma_{k+1}\mu-\frac{\gamma_k\beta}{2}).$
		Then the sequence $\{x_k\}_{k \in \mathbb{N}}$ generated by Algorithm \ref{algorithm2} converges $R$-linearly to a point $x^*$ in $\zer(A+B+C)$.
	}
\end{theorem}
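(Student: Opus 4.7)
The plan is to strengthen the estimate of Lemma \ref{lemma2} using the $\rho$-strong monotonicity of $A$, and then to sandwich the Lyapunov function $\Psi_k(x^*)$ between two constant multiples of $\|x_k-x^*\|_S^2$ so that $Q$-linear decrease of $\Psi_k$ translates into $R$-linear decrease of $\|x_k-x^*\|_S$.

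First, I would redo the derivation leading to \eqref{17} but retain the strong monotonicity slack. In \eqref{20}, the inequality $0\le\langle \cdot,x_{k+1}-x^*\rangle$ came from monotonicity of $A$; under $\rho$-strong monotonicity w.r.t.\ $S$, this upgrades to $\rho\|x_{k+1}-x^*\|_S^{2}\le\langle\cdot,x_{k+1}-x^*\rangle$. Carrying this through with the same estimates \eqref{22}–\eqref{25} produces the sharpened recursion
\begin{equation*}
\Psi_{k+1}(x^*)+2\gamma_k\rho\,\|x_{k+1}-x^*\|_S^{2}\le\Psi_{k}(x^*)-\kappa\,\|x_{k+1}-x_k\|_S^{2},
\end{equation*}
where $\kappa=1-L_{k-1}-L_k-\gamma_k\mu-\gamma_{k+1}\mu-\gamma_k\beta/2\ge\epsilon>0$ by \eqref{27}.

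Second, I would obtain an upper bound for $\Psi_{k+1}(x^*)$ of the form $a\|x_{k+1}-x^*\|_S^{2}+b\|x_{k+1}-x_k\|_S^{2}$. Starting from the definition \eqref{Psik} applied at index $k+1$, I bound the cross term with $u_{k+1}$ using the generalized Cauchy–Schwarz inequality together with $\|u_{k+1}\|_{S^{-1}}\le L_k\|x_{k+1}-x_k\|_S$ (which follows from the $L_k$-Lipschitz property of $\gamma_kM_k-S$ in Assumption \ref{assumption1}(iv) and the definition of $u_{k+1}$), and the $B$-cross term using $\mu$-Lipschitz continuity of $B$; Young's inequality with the free parameter $\varepsilon_1>0$ is applied in both cases to split the products. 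The outcome is
\begin{equation*}
\Psi_{k+1}(x^*)\le\Bigl(1+\tfrac{L_k}{\varepsilon_1}+\gamma_{k+1}\mu\Bigr)\|x_{k+1}-x^*\|_S^{2}+\bigl(\varepsilon_1 L_k+2\gamma_{k+1}\mu+L_k\bigr)\|x_{k+1}-x_k\|_S^{2}.
\end{equation*}

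Third, the definition of $t$ in the statement is precisely chosen so that $t\Psi_{k+1}(x^*)\le 2\gamma_k\rho\|x_{k+1}-x^*\|_S^{2}+\kappa\|x_{k+1}-x_k\|_S^{2}$. Substituting this into the sharpened recursion yields $(1+t)\Psi_{k+1}(x^*)\le\Psi_k(x^*)$, so $\Psi_k(x^*)\le(1+t)^{-k}\Psi_0(x^*)$, which is $Q$-linear decay of $\Psi_k$. Combining this with the positive lower bound $\Psi_k(x^*)\ge(1-L_{k-1}-\gamma_k\mu)\|x_k-x^*\|_S^{2}$ established in \eqref{28} during the proof of Theorem \ref{theorem2} produces the $R$-linear bound on $\|x_k-x^*\|_S$, and the limit point is the unique zero of $A+B+C$ guaranteed by strong monotonicity.

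The main obstacle I anticipate is the bookkeeping in the second step: the Young's inequality weights have to be chosen exactly so that the residual coefficients match the two fractions in the definition of $t$, and one must also verify that $t$ is uniformly bounded below by a positive constant, which requires that $L_k$, $\gamma_k$, $\gamma_{k+1}$ admit uniform bounds above (together with $\gamma_k\ge\gamma>0$ from Assumption \ref{assumption1}(iv) and $\kappa\ge\epsilon$ from \eqref{27}) so that both ratios stay bounded away from zero. Once this uniformity is secured, the rest is a direct telescoping argument.
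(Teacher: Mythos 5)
Your proposal is correct and follows essentially the same route as the paper's proof: sharpen the recursion of Lemma \ref{lemma2} via the $\rho$-strong monotonicity of $A$ to obtain $(1+2\gamma_k\rho)\|x_{k+1}-x^*\|_S^2+a_{k+1}(x^*)+\kappa\|x_{k+1}-x_k\|_S^2\le\Psi_k(x^*)$, bound the momentum and $B$-cross terms by Cauchy--Schwarz and Young with parameter $\varepsilon_1$ so that the definition of $t$ yields $(1+t)\Psi_{k+1}(x^*)\le\Psi_k(x^*)$, and conclude via the lower bound \eqref{28}. Your closing observation that $t$ must be uniformly bounded below (the paper's $t$ is written with $k$-dependent quantities) is a fair point of rigor, but it does not change the argument.
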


\begin{proof}
Using the strong monotonicity of $A$, inequality \eqref{20} can be rewritten as
\begin{equation*}
	\rho\|x_{k+1}-x^*\|_S^2 \leq \langle M_kx_k-M_kx_{k+1}-2Bx_k+Bx_{k-1}-Cx_k+\gamma_k^{-1}u_k+(B+C)x^* , x_{k+1}-x^* \rangle. 		\label{70}		
\end{equation*}
Furthermore, following the same arguments as in Lemma \ref{lemma2}, we obtain the recursive inequality:
\begin{equation}
	\begin{aligned}
		& (1+2\gamma_k\rho) \|x_{k+1}-x^*\|_S^2+2\langle u_{k+1}, x_{k+1}-x^*\rangle+2\gamma_k\langle Bx_{k+1}-Bx_k, x^*-x_{k+1}\rangle\\
		 \leq & \|x_k-x^*\|_S^2 +2\langle u_k,x_k-x^*\rangle+(\gamma_k\mu+L_{k-1})\|x_k-x_{k-1}\|_S^2 \\
		& +2\gamma_k \langle Bx_k-Bx_{k-1},x^*-x_k\rangle -(1-L_{k-1}-\gamma_k\mu-\frac{\gamma_k\beta}{2})\|x_{k+1}-x_k\|_S^2.
	\end{aligned} 					\label{71}
\end{equation}		
Indeed, if $B$ is also $\rho$-strongly monotone, the preceding inequality remains valid. Set
$$a_k(x^*)=2\langle u_k,x_k-x^*\rangle+(\gamma_k\mu+L_{k-1})\|x_{k}-x_{k-1}\|_S^2+2\gamma_k\langle Bx_k-Bx_{k-1},x^*-x_k\rangle, $$
$$b_k=\kappa\|x_{k+1}-x_k\|_S^2.$$
Then inequality \eqref{71} can be equivalently written as
\begin{equation}
	(1+2\gamma_k\rho)\|x_{k+1}-x^*\|_S^2+a_{k+1}(x^*)+b_k \leq \|x_{k}-x^*\|_S^2+a_k(x^*). \label{72}
\end{equation}
Using the Lipschitz continuity of $u_{k+1}$ and Young's inequality, we obtain the following bound for $a_{k+1}(x^*)$:
\begin{equation*}
	\label{73}
	\begin{aligned}
		 a_{k+1}(x^*)
		= & 2\langle u_{k+1},x_{k+1}-x^*\rangle+(\gamma_{k+1}\mu+L_k)\|x_{k+1}-x_{k}\|_S^2\\
		&+2\gamma_{k+1}\langle Bx_{k+1}-Bx_{k},x^*-x_{k+1}\rangle \\
		\leq &  \varepsilon_1 L_k\|x_{k+1}-x_{k}\|_S^2+\frac{L_k}{\varepsilon_1}\|x_{k+1}-x^*\|_S^2+
		(\gamma_{k+1}\mu+L_k)\|x_{k+1}-x_{k}\|_S^2 \\
		&+\gamma_{k+1}\mu\|x_{k+1}-x_{k}\|_S^2+\gamma_{k+1}\mu\|x_{k+1}-x^*\|_S^2\\
		= &  (\varepsilon_1 L_k+2\gamma_{k+1}\mu+L_k)\|x_{k+1}-x_{k}\|_S^2+(\frac{L_k}{\varepsilon_1}+\gamma_{k+1}\mu)\|x_{k+1}-x^*\|^2_S.
	\end{aligned} 					
\end{equation*}
By the assumption of $t$, we obtain that
\begin{equation}
	\label{74}
	ta_{k+1}(x^*) \leq  (2\gamma_k\rho-t)\|x_{k+1}-x^*\|^2_S+b_k.
\end{equation}
By combining inequalities \eqref{72} and \eqref{74}, we deduce that
\begin{equation*}
\label{75}
(1+t)(\|x_{k+1}-x^*\|^2_S+a_{k+1}(x^*) )\leq  \|x_{k}-x^*\|_S^2+a_k(x^*) .
\end{equation*}
On the other hand, from \eqref{28} and condition \eqref{27}, there exists a constant $k_1>0$ such that
\begin{equation*}
	\label{76}
	\|x_{k+1}-x^*\|_S+a_{k+1}(x^*)  \geq (1-L_{k}-\gamma_{k+1}\mu)\|x_{k+1}-x^*\|_S^2\geq k_1\|x_{k+1}-x^*\|_S^2.
\end{equation*}
Therefore,
\begin{equation*}
	\begin{aligned}
		\label{77}
		k_1\|x_{k+1}-x^*\|_S & \leq \|x_{k+1}-x\|_S^2+a_{k+1}(x^*) \\
		&\leq \frac{\|x_{k}-x^*\|_S^2+a_k(x^*) }{1+t}\leq \ldots \leq \frac{\|x_{1}-x^*\|_S^2+a_1(x^*) }{(1+t)^k}.
	\end{aligned}
\end{equation*}
Hence, the sequence $\{x_k\}_{k \in \mathbb{N}}$ generated by Algorithm \ref{algorithm2} converges $R$-linearly to a point $x^*$ in $\zer(A+B+C)$.
\end{proof}

\subsection{Nonlinear semi-reflected-forward-backward splitting algorithm with momentum}
In this subsection, we propose a nonlinear semi-reflected-forward-backward splitting (SRFBS) algorithm with momentum, denoted as Algorithm \ref{algorithm3},  and establish its convergence results.
\begin{algorithm}\label{algorithm3}
\hrule
\noindent\textbf{\footnotesize{Nonlinear SRFBS algorithm with momentum}}
\hrule
\vskip 1mm
\noindent Let $S \in \mathcal{P}(\mathcal{H})$, $M_k: \mathcal{H} \rightarrow \mathcal{H}$ $(\forall k \in\mathbb{N})$,  $\gamma > 0$ and initial point $x_{0}, \ u_{0} \in \mathcal{H}.$ For $k=0,1,\cdots,$ do\\
\begin{equation*}
\label{algorithm_3}
\aligned
&y_k=2x_k-x_{k-1} \\
&x_{k+1}=(M_k+A)^{-1}(M_kx_k-By_k-Cx_k+\gamma^{-1}u_k), \\
&u_{k+1}=(\gamma M_k-S)x_{k+1}-(\gamma M_k-S)x_k.
\endaligned
\end{equation*}
\vskip 1mm
\hrule
\hspace*{\fill}
\end{algorithm}

For the iterates $\{x_k\}_{k \in \mathbb{N}}$, $\{y_k\}_{k \in \mathbb{N}}$, and $\{u_k\}_{k \in \mathbb{N}}$ generated by Algorithm \ref{algorithm3}, and $ \forall x \in \zer(A+B+C),$ we define
\begin{equation}\label{Gammak}
\begin{aligned}
\Gamma_{k}(x):=&\|x_k-x\|_{S}^2+2\langle u_k, x_k-x\rangle +2\gamma \langle By_{k-1}-Bx, x_k-x_{k-1}\rangle \\
&+2\langle u_k-u_{k-1}, x_k-x_{k-1}\rangle,
\end{aligned}
\end{equation}
which will be used to establish the weak convergence of Algorithm \ref{algorithm3}.	

\begin{lemma}
\label{lemma3}
Suppose that Assumption {\rm\ref{assumption1}} holds with $\gamma_k \equiv \gamma$ for all $k \in \mathbb{N}$, and let  $\varepsilon_2>0$. Then, for the sequences  $\{x_k\}_{k \in \mathbb{N}},$ $\{y_k\}_{k \in \mathbb{N}},$ and $\{u_k\}_{k \in \mathbb{N}}$ generated by Algorithm \ref{algorithm3}, and $\forall x^* \in \zer(A+B+C),$ the following inequality holds:
\begin{equation}
\begin{aligned}
	& \Gamma_{k+1}(x^*)+(2-\frac{(1+2\varepsilon_2)\gamma\beta}{2}-L_{k-1})\|x_{k+1}-x_k\|_S^2 +\|x_{k+1}-y_k\|_S^2 \\
	\leq &  \Gamma_{k}(x^*)+(1+2L_{k-1}+\frac{\gamma\beta}{\varepsilon_2}+\gamma\mu(\sqrt{2}+1))\|x_k-x_{k-1}\|_S^2 +L_{k-2}\|x_{k-1}-x_{k-2}\|_S^2\\
	&+\gamma\mu\|x_k-y_{k-1}\|_S^2+(\sqrt{2}\gamma\mu+L_{k-1}+L_{k-2})\|x_{k+1}-y_{k}\|_S^2
 \label{30}
\end{aligned}
\end{equation}
for all $n \in \mathbb{N}_0.$
\end{lemma}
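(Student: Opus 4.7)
The plan parallels that of Lemma~\ref{lemma2}, with additional care required to absorb the reflection $y_k = 2x_k - x_{k-1}$ appearing in the forward step on~$B$. First I would use the resolvent step of Algorithm~\ref{algorithm3} to obtain the inclusion $M_k x_k - M_k x_{k+1} - By_k - Cx_k + \gamma^{-1}u_k \in Ax_{k+1}$, combine it with $-(B+C)x^* \in Ax^*$ via the monotonicity of~$A$, multiply by $2\gamma$, and invoke the identity $2\gamma M_k(x_k - x_{k+1}) = 2S(x_k - x_{k+1}) - 2u_{k+1}$ that follows directly from the definition of $u_{k+1}$; this yields
\[
\begin{aligned}
0 \leq\ & 2\langle Sx_k - Sx_{k+1}, x_{k+1} - x^*\rangle + 2\langle u_k - u_{k+1}, x_{k+1} - x^*\rangle \\
& - 2\gamma\langle By_k - Bx^*, x_{k+1} - x^*\rangle - 2\gamma\langle Cx_k - Cx^*, x_{k+1} - x^*\rangle.
\end{aligned}
\]
Applying the polarization identity to the $S$-inner product and rewriting $2\langle u_k - u_{k+1}, x_{k+1} - x^*\rangle = 2\langle u_k, x_k - x^*\rangle - 2\langle u_{k+1}, x_{k+1} - x^*\rangle + 2\langle u_k, x_{k+1} - x_k\rangle$ exhibits the quadratic parts $\|x_k - x^*\|_S^2 + 2\langle u_k, x_k - x^*\rangle$ and $\|x_{k+1} - x^*\|_S^2 + 2\langle u_{k+1}, x_{k+1} - x^*\rangle$ of $\Gamma_k(x^*)$ and $\Gamma_{k+1}(x^*)$. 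Adding and subtracting the remaining cross terms of $\Gamma_{k+1}$ on the LHS and of $\Gamma_k$ on the RHS then sets up the telescoping structure $\Gamma_{k+1}(x^*) - \Gamma_k(x^*)$.

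Next I would collapse the residual $B$-contribution $-2\gamma\langle By_k - Bx^*, x_k - x^*\rangle - 2\gamma\langle By_{k-1} - Bx^*, x_k - x_{k-1}\rangle$. Writing $x_k - x^* = (y_k - x^*) - (x_k - x_{k-1})$ and discarding the nonnegative monotone term $\langle By_k - Bx^*, y_k - x^*\rangle$ bounds this by $2\gamma\langle By_k - By_{k-1}, x_k - x_{k-1}\rangle$. Cauchy--Schwarz combined with the $\mu$-Lipschitz continuity of~$B$, the decomposition $y_k - y_{k-1} = (y_k - x_k) + (x_k - y_{k-1}) = (x_k - x_{k-1}) + (x_k - y_{k-1})$, and a Young's inequality tuned to produce the $\sqrt{2}$-factor then yields contributions of the form $\gamma\mu(\sqrt{2}+1)\|x_k - x_{k-1}\|_S^2 + \gamma\mu\|x_k - y_{k-1}\|_S^2 + \sqrt{2}\gamma\mu\|x_{k+1} - y_k\|_S^2$. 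For the cocoercive term I would split $x_{k+1} - x^* = (x_{k+1} - y_k) + (y_k - x^*)$: Lemma~\ref{lem0} applied to the second piece (with $z = y_k$, $x = x_k$, $y = x^*$, using $\|y_k - x_k\|_S = \|x_k - x_{k-1}\|_S$) gives $\tfrac{\gamma\beta}{2}\|x_k - x_{k-1}\|_S^2$, while the first piece is handled by combining cocoercivity in the form $\|Cx_k - Cx^*\|_{S^{-1}}^2 \leq \beta\langle Cx_k - Cx^*, x_k - x^*\rangle$ with Cauchy--Schwarz and a Young's inequality carrying the free parameter $\varepsilon_2$, producing the $\tfrac{(1+2\varepsilon_2)\gamma\beta}{2}\|x_{k+1} - x_k\|_S^2$ and $\tfrac{\gamma\beta}{\varepsilon_2}\|x_k - x_{k-1}\|_S^2$ coefficients.

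Finally, the leftover $u$-cross terms $2\langle u_k, x_{k+1} - x_k\rangle$ and $-2\langle u_k - u_{k-1}, x_k - x_{k-1}\rangle$ are estimated via Cauchy--Schwarz together with $\|u_k\|_{S^{-1}} \leq L_{k-1}\|x_k - x_{k-1}\|_S$ and $\|u_{k-1}\|_{S^{-1}} \leq L_{k-2}\|x_{k-1} - x_{k-2}\|_S$, followed by Young's inequality; these produce the $L_{k-1}, L_{k-2}$ contributions in the RHS coefficients and the $L_{k-2}\|x_{k-1} - x_{k-2}\|_S^2$ term. The $\|x_{k+1} - y_k\|_S^2$ summand on the LHS together with the factor~$2$ on $\|x_{k+1} - x_k\|_S^2$ will emerge by invoking the identity $\|x_{k+1} - x_k\|_S^2 = \|x_{k+1} - y_k\|_S^2 - 2\langle x_{k+1} - y_k, x_k - x_{k-1}\rangle_S + \|x_k - x_{k-1}\|_S^2$ at the appropriate step, trading part of the $-\|x_{k+1} - x_k\|_S^2$ coming from polarization into a $-\|x_{k+1} - y_k\|_S^2$ contribution. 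The main obstacle will be the meticulous bookkeeping required to produce the stated coefficients exactly: the $\sqrt{2}$-balanced Young on the $B$-side, the $(1+2\varepsilon_2)/\varepsilon_2^{-1}$-balanced Young on the $C$-side, and the shifted Lipschitz indices $L_{k-2}, L_{k-1}, L_k$ across the various $u$-terms must all align simultaneously. Once every estimate is collected and grouped by $\|x_{k+1} - x_k\|_S^2$, $\|x_{k+1} - y_k\|_S^2$, $\|x_k - x_{k-1}\|_S^2$, $\|x_k - y_{k-1}\|_S^2$, and $\|x_{k-1} - x_{k-2}\|_S^2$, the claimed inequality follows.
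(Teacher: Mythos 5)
Your opening moves (the resolvent inclusion, monotonicity of $A$ tested against $x^*$, the polarization identity, the telescoping of the $B$-cross terms via $x_{k+1}-y_k=(x_{k+1}-x_k)-(x_k-x_{k-1})$, Lemma~\ref{lem0} for the cocoercive term, and the Lipschitz bounds on the $u$-terms) match the paper's proof. The genuine gap is in how you treat the leftover term $2\gamma\langle By_k-By_{k-1},x_{k+1}-x_k\rangle$, and consequently where the distinctive pieces of \eqref{30} come from. The paper does \emph{not} bound this term by Lipschitz continuity of $B$; it writes down the defining inclusions of the algorithm at iterations $k$ and $k-1$ and applies the monotonicity of $A$ to the consecutive pair $(x_{k+1},x_k)$ (inequality \eqref{42}). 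That one extra inequality is the source of the additional $-\|x_{k+1}-x_k\|_S^2$ (hence the coefficient $2$ on the left rather than $1$), of the $-\|x_{k+1}-y_k\|_S^2$ on the left, of the leading ``$1$'' on $\|x_k-x_{k-1}\|_S^2$ on the right, of the $\varepsilon_2$-dependent terms $\gamma\beta\varepsilon_2\|x_{k+1}-x_k\|_S^2$ and $\frac{\gamma\beta}{\varepsilon_2}\|x_k-x_{k-1}\|_S^2$ (which arise from Young's inequality applied to $-2\gamma\langle Cx_k-Cx_{k-1},x_{k+1}-x_k\rangle$, not from cocoercivity tested against $x^*$ as you propose), and of the cross terms $2\langle u_{k+1}-u_k,x_{k+1}-x_k\rangle$ and $2\langle u_k-u_{k-1},x_k-x_{k-1}\rangle$ that sit inside $\Gamma_{k+1}$ and $\Gamma_k$. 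Only the remaining piece $2\gamma\langle By_k-By_{k-1},y_k-x_{k+1}\rangle$ is handled by Lipschitz continuity of $B$ with the $\sqrt{2}$-balanced Young's inequality; that is where the $\gamma\mu(\sqrt{2}+1)$, $\sqrt{2}\gamma\mu$, and $\gamma\mu\|x_k-y_{k-1}\|_S^2$ coefficients originate.

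Your proposed substitutes cannot recover these terms. Bounding the full residual $2\gamma\langle By_k-By_{k-1},x_k-x_{k-1}\rangle$ by Cauchy--Schwarz and $\mu$-Lipschitz continuity yields only $O(\gamma\mu)$ quadratic terms on the right and produces no negative squares on the left. And the identity $\|x_{k+1}-x_k\|_S^2=\|x_{k+1}-y_k\|_S^2+2\langle x_{k+1}-y_k,x_k-x_{k-1}\rangle_S+\|x_k-x_{k-1}\|_S^2$ (note the sign of the cross term, which you state incorrectly) merely redistributes the single copy of $-\|x_{k+1}-x_k\|_S^2$ coming from polarization; it cannot manufacture the extra combination $-\|x_{k+1}-x_k\|_S^2-\|x_{k+1}-y_k\|_S^2+\|x_k-x_{k-1}\|_S^2$, because the sign-indefinite cross term cannot simply be discarded. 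Without the consecutive-iterate monotonicity step the inequality you would obtain is structurally different from \eqref{30} and would not feed the Lyapunov function $\Xi_k$ used in Theorem~\ref{theorem3}.
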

			
\begin{proof}
From Algorithm \ref{algorithm3}, we have
$$x_{k+1}=(M_k+A)^{-1}(M_kx_k-By_k-Cx_k+\gamma^{-1}u_k),$$
which is equivalent to the inclusion
\begin{equation}
M_kx_k-M_kx_{k+1}-By_k-Cx_k+\gamma^{-1}u_k \in Ax_{k+1}.     \label{31}
\end{equation}
Since $x^* \in \zer(A+B+C),$ we have
\begin{equation}
-(B+C)x^* \in Ax^*.                                 \label{32}
\end{equation}
Combining \eqref{31}, \eqref{32}, and using the monotonicity of $A$, we obtain
\begin{equation*}
0 \leq \langle M_kx_k-M_kx_{k+1}-By_k-Cx_k+\gamma^{-1}u_k+(B+C)x^* , x_{k+1}-x^* \rangle. \label{33}
\end{equation*}
Multiplying both sides by $2\gamma$ and applying the definition of $u_{k+1}$ yield
\begin{equation}
\aligned
0& \leq 2\langle Sx_k-Sx_{k+1}+u_k-u_{k+1}-\gamma By_k-\gamma Cx_k+\gamma (B+C)x^* , x_{k+1}-x^* \rangle\\
& = 2\langle Sx_k-Sx_{k+1}, x_{k+1}-x^* \rangle +2\langle u_k-u_{k+1},x_{k+1}-x^* \rangle-2\gamma \langle By_k-Bx^*,x_{k+1}-x^*\rangle \\
& \quad -2\gamma\langle Cx_k-Cx^*,x_{k+1}-x^*\rangle. 	
\endaligned
\label{34}
\end{equation}
In the following, we analyze each term on the right-hand side of the inequality \eqref{34} separately.  Analogous to \eqref{22} and \eqref{23}, we obtain
				\begin{equation}
					\begin{aligned}
					   & 2\langle Sx_k -Sx_{k+1}, x_{k+1}-x^*\rangle \\
                       =&\|x_k-x^*\|_S^2-\|x_{k+1}-x^*\|_S^2-\|x_{k+1}-x_k\|_S^2,
					\end{aligned}  \label{35}
				\end{equation}
				and
				\begin{equation}
					\begin{aligned}
						& 2\langle u_k-u_{k+1}, x_{k+1}-x^*\rangle \\
                        =&2\langle u_k,x_k-x^*\rangle+2\langle u_k,x_{k+1}-x_k\rangle-2\langle u_{k+1},x_{k+1}-x^*\rangle  \\
                        \leq & 2\langle u_k,x_k-x^*\rangle-2\langle u_{k+1},x_{k+1}-x^*\rangle+L_{k-1}\|x_k-x_{k-1}\|_S^2 \\
                        &+L_{k-1}\|x_{k+1}-x_k\|_S^2.
					\end{aligned}  \label{36}
				\end{equation}
The monotonicity of  $B:\mathcal{H} \rightarrow \mathcal{H}$ implies
\begin{equation}
\begin{aligned}
&-2\gamma \langle By_k-Bx^*, x_{k+1}-x^*\rangle\\
=&-2\gamma \langle By_k-Bx^*, x_{k+1}-y_k\rangle-2\gamma \langle By_k-Bx^*, y_k-x^*\rangle \\
\leq & -2\gamma \langle By_k-Bx^*, x_{k+1}-y_k\rangle \\
=& -2\gamma \langle By_k-Bx^*, x_{k+1}-x_k\rangle + 2\gamma \langle By_k-Bx^*, x_k-x_{k-1}\rangle \\
=& -2\gamma \langle By_k-Bx^*, x_{k+1}-x_k\rangle+2\gamma \langle By_{k-1}-Bx^*, x_k-x_{k-1}\rangle \\
&+2\gamma \langle By_k-By_{k-1}, x_k-x_{k-1}\rangle \\
=& -2\gamma \langle By_k-Bx^*, x_{k+1}-x_k\rangle+2\gamma \langle By_{k-1}-Bx^*, x_k-x_{k-1}\rangle \\
&+2\gamma \langle By_k-By_{k-1},x_{k+1}-x_k\rangle+2\gamma \langle By_k-By_{k-1},y_k-x_{k+1}\rangle.
\end{aligned}  \label{37}
\end{equation}
Moreover, by Lemma \ref{lem0}, the cocoercivity of $C$ yields
\begin{equation}
-2\gamma\langle Cx_k-Cx^* , x_{k+1}-x^* \rangle \leq \frac{\gamma\beta}{2}\|x_{k+1}-x_k\|_S^2. \label{38}
\end{equation}
Combining \eqref{34}--\eqref{38}, we obtain
\begin{equation}
\begin{aligned}
& \|x_{k+1}-x^*\|_S^2+2\langle u_{k+1}, x_{k+1}-x^*\rangle+2\gamma\langle By_k-Bx^*, x_{k+1}-x_k\rangle\\
& +(1-\frac{\gamma\beta}{2}-L_{k-1})\|x_{k+1}-x_k\|_S^2 \\
\leq & \|x_k-x^*\|_S^2 +2\langle u_k,x_k-x^*\rangle +L_{k-1}\|x_k-x_{k-1}\|_S^2\\
& +2\gamma\langle By_{k-1}-Bx^*, x_k-x_{k-1}\rangle +2\gamma \langle By_k-By_{k-1},x_{k+1}-x_k\rangle \\
&+2\gamma \langle By_k-By_{k-1},y_k-x_{k+1}\rangle.
\end{aligned} 					\label{39}
\end{equation}	
Furthermore, from Algorithm \ref{algorithm3} we have
\begin{equation*}
\begin{aligned}
& M_kx_k-M_kx_{k+1}-By_k-Cx_k+\gamma^{-1}u_k \in Ax_{k+1}, \\
& M_{k-1}x_{k-1}-M_{k-1}x_k-By_{k-1}-Cx_{k-1}+\gamma^{-1}u_{k-1} \in Ax_k.
\end{aligned} 					\label{40}
\end{equation*}	
By the monotonicity of $A$, it follows that
\begin{equation*}
\begin{aligned}
&\langle M_kx_k-M_kx_{k+1}-By_k-Cx_k+\gamma^{-1}u_k,x_{k+1}-x_k \rangle \\
&- \langle M_{k-1}x_{k-1}-M_{k-1}x_k-By_{k-1}
-Cx_{k-1}+\gamma^{-1}u_{k-1},x_{k+1}-x_k \rangle \geq  0. \label{41}
\end{aligned}
\end{equation*}	
Multiplying both sides by $2\gamma$ and applying the Young's inequality yield
\begin{equation}
\begin{aligned}
&2\gamma \langle By_k-By_{k-1}, x_{k+1}-x_k\rangle \\
\leq & 2\langle Sx_k-Sx_{k+1}-u_{k+1}+u_k-Sx_{k-1}+Sx_k+u_k-u_{k-1}-\gamma Cx_k+\gamma Cx_{k-1}, x_{k+1}-x_k\rangle \\
= & -2\langle x_{k+1}-x_k, x_{k+1}-x_k\rangle_S+2\langle x_k-x_{k-1},x_{k+1}-x_k\rangle_S \\
&+2\langle -u_{k+1}+2u_k-u_{k-1}, x_{k+1}-x_k\rangle -2\gamma\langle Cx_k-Cx_{k-1},x_{k+1}-x_k \rangle \\
= &-2 \|x_{k+1}-x_k\|_S^2+2\langle y_k-x_k, x_{k+1}-x_k\rangle_S+2\langle -u_{k+1}+2u_k-u_{k-1}, x_{k+1}-x_k\rangle \\
& -2\gamma\langle Cx_k-Cx_{k-1},x_{k+1}-x_k \rangle \\
\leq &- \|x_{k+1}-x_k\|_S^2- \|x_{k+1}-y_k\|_S^2+ \|x_k-x_{k-1}\|_S^2-2\langle u_{k+1}-u_k, x_{k+1}-x_k\rangle \\
& +2\langle u_k-u_{k-1}, x_{k+1}-x_k\rangle +\frac{\gamma\beta}{\varepsilon_2} \|x_k-x_{k-1}\|_S^2+\gamma\beta\varepsilon_2 \|x_{k+1}-x_k\|_S^2 \\
=&- \|x_{k+1}-x_k\|_S^2- \|x_{k+1}-y_k\|_S^2+ \|x_k-x_{k-1}\|_S^2-2\langle u_{k+1}-u_k, x_{k+1}-x_k\rangle \\
& +2\langle u_k-u_{k-1}, x_k-x_{k-1}\rangle + +2\langle u_k-u_{k-1}, x_{k+1}-y_k\rangle +\frac{\gamma\beta}{\varepsilon_2} \|x_k-x_{k-1}\|_S^2 \\
&+\gamma\beta\varepsilon_2 \|x_{k+1}-x_k\|_S^2.
\label{42}
\end{aligned}
\end{equation}
Using the  Lipschitz property of $B$, we have
\begin{equation}
\begin{aligned}
& 2\gamma \langle By_k-By_{k-1},y_k-x_{k+1}\rangle \\
\leq &2\gamma\mu \|y_k-y_{k-1}\|_S\|x_{k+1}-y_k\|_S \\
\leq &2\gamma\mu (\|x_k-y_k\|_S+\|x_k-y_{k-1}\|_S)\|x_{k+1}-y_k\|_S \\
\leq &\gamma\mu ((\sqrt{2}+1)\|x_k-x_{k-1}\|_S^2+(\sqrt{2}-1)\|x_{k+1}-y_k\|_S^2)\\
& +\gamma\mu(\|x_k-y_{k-1}\|_S^2+\|x_{k+1}-y_k\|_S^2) \\
= &\gamma\mu ((\sqrt{2}+1)\|x_k-x_{k-1}\|_S^2+\sqrt{2}\|x_{k+1}-y_k\|_S^2)+\gamma\mu\|x_k-y_{k-1}\|_S^2.   \label{43}
\end{aligned}
\end{equation}
Combining \eqref{39}, \eqref{42} and \eqref{43}, we obtain
\begin{equation*}
\begin{aligned}
&\|x_{k+1}-x^*\|_S^2+2\langle u_{k+1}, x_{k+1}-x^*\rangle+2\gamma\langle By_k-Bx^*, x_{k+1}-x_k\rangle\\
& +2\langle u_{k+1}-u_k, x_{k+1}-x_k\rangle +(2-\frac{(1+2\varepsilon_2)\gamma\beta}{2}-L_{k-1})\|x_{k+1}-x_k\|_S^2 +\|x_{k+1}-y_k\|_S^2 \\
\leq & \|x_k-x^*\|_S^2 +2\langle u_k,x_k-x^*\rangle+2\gamma\langle By_{k-1}-Bx^*, x_k-x_{k-1}\rangle\\
& +2\langle u_k-u_{k-1}, x_k-x_{k-1}\rangle +(1+L_{k-1}+\frac{\gamma\beta}{\varepsilon_2}+\gamma\mu(\sqrt{2}+1) )\|x_k-x_{k-1}\|_S^2 \\
& +2\langle u_k-u_{k-1}, x_{k+1}-y_k\rangle+\sqrt{2}\gamma\mu\|x_{k+1}-y_k\|_S^2+\gamma\mu\|x_k-y_{k-1}\|_S^2. \\
\end{aligned} 					\label{44}
\end{equation*}	
Using the  Lipschitz property of  $\gamma M_k - S$ for all $k \in \mathbb{N}$, we obtain
\begin{equation*}
\begin{aligned}
& 2\langle u_k-u_{k-1}, x_{k+1}-y_k\rangle\\
\leq & L_{k-1}\|x_k-x_{k-1}\|_S^2+L_{k-2}\|x_{k-1}-x_{k-2}\|_S^2+(L_{k-1}+L_{k-2})\|x_{k+1}-y_{k}\|_S^2. \\
\end{aligned} 					\label{45}
\end{equation*}	
Therefore,
\begin{equation*}
\begin{aligned}
&\|x_{k+1}-x^*\|_S^2+2\langle u_{k+1}, x_{k+1}-x^*\rangle+2\gamma\langle By_k-Bx^*, x_{k+1}-x_k\rangle\\
& +2\langle u_{k+1}-u_k, x_{k+1}-x_k\rangle +(2-\frac{(1+2\varepsilon_2)\gamma\beta}{2}-L_{k-1})\|x_{k+1}-x_k\|_S^2 +\|x_{k+1}-y_k\|_S^2 \\
\leq & \|x_k-x^*\|_S^2 +2\langle u_k,x_k-x^*\rangle+2\gamma\langle By_{k-1}-Bx^*, x_k-x_{k-1}\rangle\\
& +2\langle u_k-u_{k-1}, x_k-x_{k-1}\rangle+(1+2L_{k-1}+\frac{\gamma\beta}{\varepsilon_2}+\gamma\mu(\sqrt{2}+1))\|x_k-x_{k-1}\|_S^2 \\
&+L_{k-2}\|x_{k-1}-x_{k-2}\|_S^2+\gamma\mu\|x_k-y_{k-1}\|_S^2+(\sqrt{2}\gamma\mu+L_{k-1}+L_{k-2})\|x_{k+1}-y_{k}\|_S^2. \\
\end{aligned} 					\label{46}
\end{equation*}	
Then, \eqref{30} is deduced by the definition of $\Gamma_{k}(x)$ in \eqref{Gammak}.
\end{proof}

\begin{theorem}
\label{theorem3}
{
\noindent
Suppose that Assumption {\rm\ref{assumption1}} holds with $\gamma_k \equiv \gamma$ for all $k \in \mathbb{N}$, and let $\varepsilon_2>0$. Assume further that there exists a constant $\epsilon >0$ such that
\begin{equation}
\begin{split}
&1-3L_k-\frac{(2+\varepsilon_2+2\varepsilon_2^2)\gamma\beta}{2\varepsilon_2}-L_{k-1}-\gamma\mu(\sqrt{2}+1)\geq \epsilon, \\
&1-L_{k-1}-L_{k-2}-\gamma\mu(\sqrt{2}+1)\geq \epsilon\label{47}
\end{split}
\end{equation}
for all $n \in \mathbb{N}_0$. Then, the sequence $\{x_k\}_{k \in \mathbb{N}}$ generated by Algorithm \ref{algorithm3} converges weakly to a point $x^*$ in $\zer(A+B+C)$.
}
\end{theorem}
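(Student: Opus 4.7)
The plan is to adapt the strategy of Theorem~\ref{theorem2} to the richer memory structure of the function $\Gamma_k(x^*)$ defined in~\eqref{Gammak}. The main technical obstacle is that the right-hand side of~\eqref{30} carries three ``backward'' residual terms---$\|x_k-x_{k-1}\|_S^2$, $\|x_{k-1}-x_{k-2}\|_S^2$, and $\|x_k-y_{k-1}\|_S^2$---in addition to the two ``current'' terms $\|x_{k+1}-x_k\|_S^2$ and $\|x_{k+1}-y_k\|_S^2$. To telescope these, I would augment $\Gamma_k$ with matching memory and work with
\[
\Phi_k(x^*) := \Gamma_k(x^*) + a\|x_k-x_{k-1}\|_S^2 + b\|x_{k-1}-x_{k-2}\|_S^2 + c\|x_k-y_{k-1}\|_S^2,
\]
choosing constants $a,b,c\geq 0$ so that, in the difference $\Phi_{k+1}(x^*)-\Phi_k(x^*)$, the coefficients of the three backward terms become non-positive while the two current terms receive coefficients at most $-\eta$ for some $\eta>0$. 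Setting $\alpha_k = 2-(1+2\varepsilon_2)\gamma\beta/2-L_{k-1}$, $\beta_k = 1+2L_{k-1}+\gamma\beta/\varepsilon_2+\gamma\mu(\sqrt{2}+1)$ and $\delta_k = \sqrt{2}\gamma\mu+L_{k-1}+L_{k-2}$, a direct reading of~\eqref{30} produces the system $b\geq L_{k-2}$, $c\geq\gamma\mu$, $a\geq\beta_k+b$, $a\leq\alpha_k-\eta$, and $c\leq 1-\delta_k-\eta$. Taking $b=L_{k-2}$, $a=\beta_k+L_{k-2}$, $c=\gamma\mu$ turns the two remaining inequalities into exactly the two conditions in~\eqref{47}, and I obtain the descent
\[
\Phi_{k+1}(x^*) + \eta\bigl(\|x_{k+1}-x_k\|_S^2 + \|x_{k+1}-y_k\|_S^2\bigr) \leq \Phi_k(x^*).
\]

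Next, I would derive a lower bound $\Phi_k(x^*) \geq \kappa\|x_k-x^*\|_S^2 - C_k$ (with $\kappa>0$ and $C_k$ built from the memory terms already present in $\Phi_k$) along the lines of~\eqref{28}: Cauchy--Schwarz plus $\|u_k\|_{S^{-1}}\leq L_{k-1}\|x_k-x_{k-1}\|_S$, the $\mu$-Lipschitz continuity of $B$, and Young's inequality absorb the cross terms of~\eqref{Gammak} into the memory pieces of $\Phi_k$, while the expansion $\|y_{k-1}-x^*\|_S^2 \leq 2\|x_{k-1}-x^*\|_S^2 + 2\|x_{k-1}-x_{k-2}\|_S^2$ handles the term $By_{k-1}-Bx^*$. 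Boundedness of $\{x_k\}$ and convergence of $\{\Phi_k(x^*)\}$ follow. Telescoping the descent gives $\sum_k(\|x_{k+1}-x_k\|_S^2+\|x_{k+1}-y_k\|_S^2)<\infty$, hence $\|x_{k+1}-x_k\|_S\to0$ and $\|x_{k+1}-y_k\|_S\to0$. Since $\|u_k\|_{S^{-1}}\to 0$ and $\{x_k\}$ is bounded, every cross term in~\eqref{Gammak} vanishes, so $\lim_k\|x_k-x^*\|_S^2 = \lim_k\Phi_k(x^*)$ exists for each $x^*\in\zer(A+B+C)$, verifying condition~(i) of Lemma~\ref{lem1}.

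For condition~(ii), I would imitate the $\Delta_k$ construction of Theorem~\ref{theorem2}. Rewriting~\eqref{31} produces $(x_{k+1},\Delta_k) \in \gra(A+B+C)$ with
\[
\Delta_k := M_k x_k - M_k x_{k+1} + (Bx_{k+1}-By_k) + (Cx_{k+1}-Cx_k) + \gamma^{-1}u_k.
\]
The $2\gamma^{-1}$-Lipschitz continuity of $M_k$, the $\mu$-Lipschitz continuity of $B$ applied to $\|x_{k+1}-y_k\|_S$, the $\beta^{-1}$-cocoercivity of $C$, and $\|u_k\|_{S^{-1}}\leq L_{k-1}\|x_k-x_{k-1}\|_S$ together yield $\|\Delta_k\|_{S^{-1}}\to 0$. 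Maximal monotonicity of $A+B+C$ (justified exactly as in the proof of Theorem~\ref{theorem2}) combined with Lemma~\ref{lem3} then forces every weak subsequential limit of $\{x_{k+1}\}$ into $\zer(A+B+C)$, and Lemma~\ref{lem1} concludes the proof. The genuinely hard step is the first one: the five-term bookkeeping inside $\Phi_k$, where it is precisely the two inequalities of~\eqref{47} that make the constants $(a,b,c)$ simultaneously feasible; once that is set up, the remaining arguments parallel those of Theorem~\ref{theorem2}.
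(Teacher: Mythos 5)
Your proposal takes essentially the same route as the paper: the paper's Lyapunov function $\Xi_k$ in \eqref{Xik} is precisely your $\Phi_k$ with $b=L_{k-2}$, $c=\gamma\mu$ and $a=\beta_k+L_{k-1}$, and its descent inequality \eqref{48}, nonnegativity bound \eqref{50}, vanishing of the residuals, and the closing $\Delta_k$/maximal-monotonicity argument via Lemma \ref{lem1} all match your outline step for step. The only correction needed is a one-index slip in your choice of $a$: since the backward term $\|x_k-x_{k-1}\|_S^2$ in $\Phi_{k+1}$ carries the coefficient $b_{k+1}=L_{k-1}$, feasibility requires $a_k\geq\beta_k+L_{k-1}$ (not $\beta_k+L_{k-2}$), after which the two surviving inequalities become \eqref{47} exactly as you claim.
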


\begin{proof}
For any $x^*$ in $\zer(A+B+C)$, define
\begin{equation}\label{Xik}
\begin{aligned}
\Xi_{k}(x^*):=&\Gamma_k(x^*)+(1+3L_{k-1}+\frac{\gamma\beta}{\varepsilon_2}+\gamma\mu(\sqrt{2}+1))\|x_k-x_{k-1}\|_S^2 \\
&+L_{k-2}\|x_{k-1}-x_{k-2}\|_S^2+\gamma\mu\|x_k-y_{k-1}\|_S^2.
\end{aligned}
\end{equation}
Then,  using inequality \eqref{30}, the following recursive relation holds:
\begin{equation}
\begin{aligned}
\Xi_{k+1}(x^*)\leq &\Xi_{k}(x^*)-(1-3L_k-\frac{(2+\varepsilon_2+2\varepsilon_2^2)\gamma\beta}{2\varepsilon_2}-L_{k-1}-\gamma\mu(\sqrt{2}+1))\|x_{k+1}-x_k\|_S^2 \\
&-(1-L_{k-1}-L_{k-2}-\gamma\mu(\sqrt{2}+1))\|x_{k+1}-y_k\|_S^2.
\end{aligned} 					\label{48}
\end{equation}
Next, we show that $\Xi_{k}(x^*)$ is bounded from below, $\forall k \in \mathbb{N}.$
We first analyze $\Gamma_k(x^*)$ using Assumption {\rm\ref{assumption1}} and the Cauchy-Schwarz inequality:
\begin{equation}
\begin{aligned}
\Gamma_{k}(x^*):=&\|x_k-x^*\|_{S}^2+2\langle u_k, x_k-x^*\rangle +2\gamma \langle By_{k-1}-Bx^*, x_k-x_{k-1}\rangle \\
&+2\langle u_k-u_{k-1}, x_k-x_{k-1}\rangle \\
\geq & \|x_k-x^*\|_{S}^2-2\|u_k\|_{S^{-1}}\|x_k-x^*\|_S+2\gamma \langle By_{k-1}-Bx_k, x_k-x_{k-1}\rangle \\
&+2\gamma \langle Bx_k-Bx^*, x_k-x_{k-1}\rangle -2\|u_k\|_{S^{-1}}\|x_k-x_{k-1}\|_S \\
& -2\|u_{k-1}\|_{S^{-1}}\|x_k-x_{k-1}\|_S \\
\geq & \|x_k-x^*\|_{S}^2-L_{k-1}\|x_k-x_{k-1}\|_S^2-L_{k-1}\|x_k-x^*\|_S^2\\
& -\gamma\mu\|x_k-y_{k-1}\|_S^2-2\gamma\mu\|x_k-x_{k-1}\|_S^2-\gamma\mu\|x_k-x^*\|_S^2\\
& -(2L_{k-1}+L_{k-2})\|x_k-x_{k-1}\|_S^2-L_{k-2}\|x_{k-1}-x_{k-2}\|_S^2 \\
=& (1-L_{k-1}-\gamma\mu)\|x_k-x^*\|_{S}^2-(3L_{k-1}+2\gamma\mu+L_{k-2})\|x_k-x_{k-1}\|_S^2 \\
& -\gamma\mu\|x_k-y_{k-1}\|_S^2-L_{k-2}\|x_{k-1}-x_{k-2}\|_S^2. \end{aligned} 					\label{49}
\end{equation}
Therefore, by combining the lower bound of $\Gamma_k(x^*)$ in \eqref{49} with the definition of $\Xi_k(x^*)$ in \eqref{Xik}, we obtain the following estimate:
\begin{equation}
\begin{aligned}
\Xi_k(x^*)\geq& (1-L_{k-1}-\gamma\mu)\|x_k-x^*\|_{S}^2-(3L_{k-1}+2\gamma\mu+L_{k-2})\|x_k-x_{k-1}\|_S^2 \\
&-\gamma\mu\|x_k-y_{k-1}\|_S^2-L_{k-2}\|x_{k-1}-x_{k-2}\|_S^2+L_{k-2}\|x_{k-1}-x_{k-2}\|_S^2\\
&+\gamma\mu\|x_k-y_{k-1}\|_S^2 +(1+3L_{k-1}+\frac{\gamma\beta}{\varepsilon_2}+\gamma\mu(\sqrt{2}+1))\|x_k-x_{k-1}\|_S^2  \\
=&(1-L_{k-1}-\gamma\mu)\|x_k-x^*\|_{S}^2+(1+\frac{\gamma\beta}{\varepsilon_2}-L_{k-2}+(\sqrt{2}-1)\gamma\mu)\|x_k-x_{k-1}\|_S^2 \\
\geq &0.    \label{50}
\end{aligned}
\end{equation}
Hence, the sequence $\{\Xi_{k}(x^*)\}_{k \in \mathbb{N}}$ is nonnegative, $\forall k \in \mathbb{N}.$  Since it is also nonincreasing by \eqref{48}, it follows that $\{\Xi_{k}(x^*)\}$ is convergent. Moreover, from inequality \eqref{48}, it follows that
$$\lim\limits_{k\rightarrow \infty}\|x_{k+1}-x_k\|_S=0,
$$
and
$$\lim\limits_{k\rightarrow \infty}\|x_{k+1}-y_k\|_S=0.$$ Consequently, by the definition of $\Xi_k(x^*)$, we have
\begin{equation*}
	\lim_{k\rightarrow \infty}\Xi_k(x^*)=\lim_{k\rightarrow \infty}\|x_k-x^*\|_S^2,  \label{14}
\end{equation*}
which implies that the sequence $\{x_k\}_{k \in \mathbb{N}}$ is bounded. Hence, there exists at least one weakly convergent subsequence $\{x_{k_n}\}_{n \in \mathbb{N}}$ such that  $x_{k_n} \rightharpoonup \bar{x} \in \mathcal{H}$ as $n\rightarrow \infty$.
Define
\begin{equation*}
	\label{delta}
	\Delta_k := M_kx_k-M_kx_{k+1}+(Bx_{k+1}-By_k)+(Cx_{k+1}-Cx_k)+\gamma^{-1}u_k.
\end{equation*}
Then, from \eqref{31}, we have $(x_{k+1}, \Delta_k) \in \gra(A+B+C)$ for all $k \in \mathbb{N}.$ Using the facts that $M_k$ is $2\gamma^{-1}$-Lipschitz continuous w.r.t. $S$, $B$ is $\mu$-Lipschitz continuous w.r.t. $S$, and $C$ is $\beta^{-1}$-cocoercive w.r.t. $S$, we obtain
\begin{equation*}
	\aligned \| \Delta_k \|_{S^{-1}}
	\leq& \|M_kx_k-M_kx_{k+1}\|_{S^{-1}}+\|Bx_{k+1}-By_k\|_{S^{-1}}+\|Cx_{k+1}-Cx_k\|_{S^{-1}}\\
	&+\gamma^{-1}\|u_k\|_{S^{-1}}  \\
	\leq&  (\frac{2}{\gamma}+\beta)\|x_k-x_{k+1}\|_S+\mu\|x_{k+1}-y_k\|_S+\frac{L_{k-1}}{\gamma}\|x_{k}-x_{k-1}\|_S.
	\endaligned \label{16}
\end{equation*}
By the arguments above, we have $\Delta_{k_n}\rightarrow 0.$
The remaining part of the proof follows the same reasoning as in Theorem \ref{theorem2} and is therefore omitted.
\end{proof}

\begin{remark}
\label{remark2}
	\rm We present two special cases of Algorithm \ref{algorithm3}.
	\begin{itemize}
		\item[{\rm (i)}] When $S=\Id$, $\gamma_k \equiv \gamma$, and $M_k\equiv\frac{\Id}{\gamma}$, we have $L_k=0$ and $u_k=0,$ $\forall k \in \mathbb{N}$. In this case, Algorithm \ref{algorithm3} reduces to the SRFBS algorithm \eqref{SRFBS}. Moreover, the admissible step size of $\gamma$ satisfies $\frac{1}{\frac{(2+\varepsilon_2+2\varepsilon_2^2)\beta}{2\varepsilon_2}+\mu(\sqrt{2}+1)}.$ Notably, this upper bound on the step size is less restrictive (i.e., allows a larger $\gamma$) than the corrsponding condition in \cite{Cevher2019}.

		\item[{\rm (ii)}] Suppose that $A=A_1+A_2,$ where $A_1:\mathcal{H} \rightarrow 2^{\mathcal{H}}$ is maximally monotone, $A_2:\mathcal{H} \rightarrow 2^{\mathcal{H}}$ is $L$-Lipschitz continuous, and $A_1+A_2$ is maximally monotone. Let $S=\Id,$ $\gamma_k\equiv \gamma$, and $M_k\equiv M=\frac{\Id}{\gamma}-A_2$, $\forall k \in \mathbb{N}$. Then Algorithm \ref{algorithm3} takes the form
			\begin{equation}
            \label{remark2+}
\left\{
\begin{array}{lr}
	y_k=2x_k-x_{k-1}, & \\
	x_{k+1}=J_{\gamma A_1}(x_k-2\gamma A_2x_k-\gamma By_k-\gamma Cy_k+A_2x_{k-1}). & \\
\end{array}
\right.		
			\end{equation}
            Since $\gamma M-\Id$ is $\gamma L$-Lipschitz continuous, condition \eqref{47} becomes
		\begin{equation*}
			1-4\gamma L-\frac{(2+\varepsilon_2+2\varepsilon_2^2)\gamma\beta}{2\varepsilon_2}-\gamma\mu(\sqrt{2}+1)\geq \epsilon.
		\end{equation*}
	\end{itemize}
\end{remark}

\subsubsection{Linear convergence}
This subsection establishes the $R$-linear convergence of the sequence $\{x_k\}_{k \in \mathbb{N}}$ generated by Algorithm \ref{algorithm3} under the assumption that $A$ is strongly monotone.

\begin{theorem}
\label{linear2}
{
\noindent
Let Assumption {\rm\ref{assumption1}} hold, and suppose that $\gamma_k \equiv \gamma$ for all $k \in \mathbb{N}$. Assume further that $A$ is $\rho$-strongly monotone, and that the conditions in \eqref{47} are satisfied. If there exist constants $\varepsilon_3, \varepsilon_4 >0$ such that
\begin{equation}
\label{111}
\scriptsize
\begin{aligned}
& t= \\
& \min\left\{\frac{2\gamma \rho}{1+\frac{L_k}{ \varepsilon_3}+2\varepsilon_4\gamma\mu},
 \frac{p}{1+5L_{k-1}+(\varepsilon_3+5)L_k+\gamma\beta+\gamma\mu(\frac{1}{\varepsilon_4}+\sqrt{2}+1)}, \frac{q}{\gamma\mu(2\varepsilon_4+1)+4L_{k-1}}\right\},
 \end{aligned}
\end{equation}
where $p=1-3L_k-\frac{(2+\varepsilon_2+2\varepsilon_2^2)\gamma\beta}{2\varepsilon_2}-L_{k-1}-\gamma\mu(\sqrt{2}+1),$ $q=1-L_{k-1}-L_{k-2}-\gamma\mu(\sqrt{2}+1),$
then the sequence $\{x_k\}_{k \in \mathbb{N}}$ generated by Algorithm \ref{algorithm3} converges $R$-linearly to a point $x^*$ in $\zer(A+B+C)$.
}
\end{theorem}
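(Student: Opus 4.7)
The plan is to parallel the argument of Theorem \ref{linear1}, replacing the Lyapunov functional $\Psi_k$ by $\Xi_k$ from \eqref{Xik} and tracking the additional coupling that is intrinsic to Algorithm \ref{algorithm3}. The starting point is to exploit the $\rho$-strong monotonicity of $A$ in the monotonicity inequality issuing from \eqref{31}--\eqref{32}: the computation that led to \eqref{30} can be redone verbatim, except that it acquires the extra $2\gamma\rho\|x_{k+1}-x^*\|_S^2$ term on the left-hand side. Combined with the definition of $\Xi_k$ in \eqref{Xik} and with the assumptions in \eqref{47}, one obtains the sharpened recursion
\begin{equation*}
\Xi_{k+1}(x^*)+2\gamma\rho\|x_{k+1}-x^*\|_S^2+p\|x_{k+1}-x_k\|_S^2+q\|x_{k+1}-y_k\|_S^2\leq \Xi_k(x^*),
\end{equation*}
where $p,q>0$ are exactly the quantities appearing in the statement.

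To upgrade mere monotone decrease to a geometric contraction, the next step is to bound $\Xi_{k+1}(x^*)$ from above by a combination of $\|x_{k+1}-x^*\|_S^2$, $\|x_{k+1}-x_k\|_S^2$ and $\|x_{k+1}-y_k\|_S^2$ (the three ``fresh'' quantities appearing in the recursion above). Applying the Cauchy--Schwarz and Young inequalities with parameters $\varepsilon_3$ and $\varepsilon_4$ to the cross terms $2\langle u_{k+1},x_{k+1}-x^*\rangle$, $2\gamma\langle By_k-Bx^*,x_{k+1}-x_k\rangle$ and $2\langle u_{k+1}-u_k,x_{k+1}-x_k\rangle$ that are buried in $\Xi_{k+1}$, and using the Lipschitz continuity of $B$ and of $\gamma M_k-S$, I would produce an estimate of the form
\begin{equation*}
\Xi_{k+1}(x^*)\leq \Bigl(\tfrac{L_k}{\varepsilon_3}+2\varepsilon_4\gamma\mu\Bigr)\|x_{k+1}-x^*\|_S^2+R_1\|x_{k+1}-x_k\|_S^2+R_2\|x_{k+1}-y_k\|_S^2,
\end{equation*}
where $R_1$ and $R_2$ are precisely the denominators of the second and third arguments of the minimum defining $t$ in \eqref{111}.

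Multiplying the preceding estimate by $t$ and invoking each of the three ratios in \eqref{111} separately yields $t\,\Xi_{k+1}(x^*)\leq (2\gamma\rho-t)\|x_{k+1}-x^*\|_S^2+p\|x_{k+1}-x_k\|_S^2+q\|x_{k+1}-y_k\|_S^2$, which, added to the sharpened recursion, produces the clean contraction
\begin{equation*}
(1+t)\,\Xi_{k+1}(x^*)\leq \Xi_k(x^*).
\end{equation*}
Iterating gives $\Xi_k(x^*)\leq (1+t)^{-(k-1)}\Xi_1(x^*)$, and the positive lower bound \eqref{50} together with the inequality $1-L_{k-1}-\gamma\mu>0$ already used in Theorem \ref{theorem3} converts this into $\|x_k-x^*\|_S^2\leq \tilde C\,(1+t)^{-(k-1)}$ for a constant $\tilde C$ depending only on the initial data. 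This is $R$-linear convergence in $\|\cdot\|_S$, and the equivalence between $\|\cdot\|_S$ and the ambient norm finishes the proof.

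The main obstacle is the second step. Whereas $\Psi_k$ in Theorem \ref{linear1} contains only one memory term and one cross term involving $B$, the functional $\Xi_k$ involves in addition the memory terms $L_{k-2}\|x_{k-1}-x_{k-2}\|_S^2$ and $\gamma\mu\|x_k-y_{k-1}\|_S^2$, as well as the extra inner product $2\langle u_k-u_{k-1},x_k-x_{k-1}\rangle$. Orchestrating the Young estimates so that each of these memory terms at index $k+1$ can be absorbed either into $p\|x_{k+1}-x_k\|_S^2$, into $q\|x_{k+1}-y_k\|_S^2$, or into $2\gamma\rho\|x_{k+1}-x^*\|_S^2$ is delicate; the three-fold minimum in the definition \eqref{111} of $t$ is precisely the quantitative price paid for this balancing, and verifying that each of the three ratios is the sharp choice is the technical heart of the proof.
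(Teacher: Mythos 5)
Your proposal follows essentially the same route as the paper's proof: the paper works with the correction functional $c_k(x^*)=\Xi_k(x^*)-\|x_k-x^*\|_S^2$, derives the same strongly-monotone recursion, bounds $c_{k+1}(x^*)$ via Young's inequality with parameters $\varepsilon_3,\varepsilon_4$ so that the three denominators in \eqref{111} appear, and closes the contraction with the lower bound \eqref{50} exactly as you describe. The only slip is cosmetic: your displayed upper bound on $\Xi_{k+1}(x^*)$ should carry the coefficient $1+\tfrac{L_k}{\varepsilon_3}+2\varepsilon_4\gamma\mu$ on $\|x_{k+1}-x^*\|_S^2$ (since $\Xi_{k+1}$ contains that square itself), which is precisely why the first ratio in \eqref{111} has the leading $1$ in its denominator, and your subsequent deduction already uses it correctly.
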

\begin{proof}
For any $x^*$ in $\zer(A+B+C)$, define
	\begin{equation*}
		\label{108}
		\begin{aligned}
			c_{k}(x^*)=&2\langle u_k, x_k-x^*\rangle +2\gamma \langle By_{k-1}-Bx^*, x_k-x_{k-1}\rangle+2\langle u_k-u_{k-1}, x_k-x_{k-1}\rangle \\
			&+(1+3L_{k-1}+\frac{\gamma\beta}{\varepsilon_2}+\gamma\mu(\sqrt{2}+1))\|x_k-x_{k-1}\|_S^2+L_{k-2}\|x_{k-1}-x_{k-2}\|_S^2\\
			&+\gamma\mu\|x_k-y_{k-1}\|_S^2.
		\end{aligned}
	\end{equation*}
	By applying the strong monotonicity of $A$ in Lemma \ref{lemma3} and following the reason in Theorem \ref{theorem3}, we obtain the following inequality:
	\begin{equation}
		\label{109}
		(1+2\gamma\rho)\|x_{k+1}-x^*\|_S^2+c_{k+1}(x^*)+p\|x_{k+1}-x_k\|_S^2+q\|x_{k+1}-y_k\|_S^2\leq  \|x_k-x^*\|_S^2 +c_k(x^*).				
	\end{equation}
By applying the Lipschitz continuity of $B$ and $u_{k+1}$ together with Young's inequality, we obtain
	\begin{equation*}
		\label{110}
		\begin{aligned}
			c_{k+1}(x^*)=&2\langle u_{k+1}, x_{k+1}-x^*\rangle +2\gamma \langle By_k-Bx^*, x_{k+1}-x_k\rangle+2\langle u_{k+1}-u_{k}, x_{k+1}-x_{k}\rangle \\
			&+(1+3L_{k}+\frac{\gamma\beta}{\varepsilon_2}+\gamma\mu(\sqrt{2}+1))\|x_{k+1}-x_k\|_S^2+L_{k-1}\|x_{k}-x_{k-1}\|_S^2\\
			&+\gamma\mu\|x_{k+1}-y_{k}\|_S^2\\
			\leq &\varepsilon_3L_k\| x_{k+1}-x_k\|_S^2+\frac{L_k}{\varepsilon_3}\| x_{k+1}-x^*\|_S^2+\varepsilon_4\gamma\mu\| y_{k}-x^*\|_S^2+\frac{\gamma\mu}{\varepsilon_4}\|x_{k+1}-x_{k}\|_S^2 \\
			&+2L_k\|x_{k+1}-x_{k}\|_S^2+2L_{k-1}\|x_{k}-x_{k-1}\|_S^2+L_{k-1}\|x_{k+1}-x_{k}\|_S^2 \\
			&+(1+3L_{k}+\frac{\gamma\beta}{\varepsilon_2}+\gamma\mu(\sqrt{2}+1))\|x_{k+1}-x_k\|_S^2+\gamma\mu\|x_{k+1}-y_{k}\|_S^2\\
			\leq &\varepsilon_3L_k\| x_{k+1}-x_k\|_S^2+\frac{L_k}{\varepsilon_3}\| x_{k+1}-x^*\|_S^2+2\varepsilon_4\gamma\mu\| x_{k+1}-y_{k}\|_S^2 \\
			&+2\varepsilon_4\gamma\mu\| x_{k+1}-x^*\|_S^2+\frac{\gamma\mu}{\varepsilon_4}\|x_{k+1}-x_{k}\|_S^2+2L_k\|x_{k+1}-x_{k}\|_S^2 \\ &+2L_{k-1}\|(x_{k+1}-x_{k})-(x_{k+1}-y_k)\|_S^2 +L_{k-1}\|x_{k+1}-x_{k}\|_S^2 \\
			&+(1+3L_{k}+\frac{\gamma\beta}{\varepsilon_2}+\gamma\mu(\sqrt{2}+1))\|x_{k+1}-x_k\|_S^2+\gamma\mu\|x_{k+1}-y_{k}\|_S^2\\
			\leq &(\frac{L_k}{\varepsilon_3}+2\varepsilon_4\gamma\mu)\| x_{k+1}-x^*\|_S^2 +(\gamma\mu(2\varepsilon_4+1)+4L_{k-1})\| x_{k+1}-y_{k}\|_S^2\\
			&+(1+5L_{k-1}+(\varepsilon_3+5)L_k+\frac{\gamma\beta}{\varepsilon_2}+\gamma\mu(\frac{1}{\varepsilon_4}+\sqrt{2}+1))\|x_{k+1}-x_k\|_S^2. \\
		\end{aligned}
	\end{equation*}
Observe from the definition of $t$ in \eqref{111} that
	\begin{equation}
		\label{112}
		tc_{k+1}(x^*) \leq  (2\gamma\rho-t)\|x_{k+1}-x^*\|_S+p\|x_{k+1}-x_k\|_S^2+q\| x_{k+1}-y_{k}\|_S^2.
	\end{equation}	
Combining \eqref{109} and \eqref{112} gives
	\begin{equation*}
			\label{105}
(1+t)(\|x_{k+1}-x^*\|_S+c_{k+1}(x^*) ) \leq \|x_k-x^*\|_S^2 +c_k(x^*) 	
	\end{equation*}
	Furthermore, by \eqref{50} and under conditions \eqref{47}, there exists a constant $k_2>0$ such that
	\begin{equation*}
		\begin{aligned}
			\label{106}
			&\|x_{k+1}-x^*\|_S+c_{k+1}(x^*)  \\
			\geq &(1-L_{k}-\gamma\mu)\|x_{k+1}-x^*\|_{S}^2+(1-L_{k-1}+\frac{\gamma\beta}{\varepsilon_2}+(\sqrt{2}-1)\gamma\mu)\|x_{k+1}-x_k\|_S^2 \\
			\geq & k_2\|x_{k+1}-x^*\|_S^2.
		\end{aligned} 	
	\end{equation*}
	Therefore,
	\begin{equation*}
		\begin{aligned}
			\label{107}
			k_2\|x_{k+1}-x^*\|_S & \leq \|x_{k+1}-x\|_S^2+c_{k+1}(x^*) \\
			&\leq \frac{\|x_{k}-x^*\|_S^2+c_k(x^*)}{1+t}\leq \ldots \leq \frac{\|x_{1}-x^*\|_S^2+c_1(x^*)}{(1+t)^k},
		\end{aligned}
	\end{equation*}
	which proves that the sequence $\{x_k\}_{k \in \mathbb{N}}$ generated by Algorithm \ref{algorithm3} converges $R$-linearly to a point $x^*$ in $\zer(A+B+C)$.
\end{proof}

\subsection{Nonlinear outer reflected forward-backward splitting algorithm with momentum}
In this subsection, we introduce a nonlinear outer reflected forward-backward splitting (ORFBS) algorithm with momentum, denoted by Algorithm \ref{algorithm4},  and establish its convergence analysis.
\begin{algorithm}\label{algorithm4}
				\hrule
				\noindent\textbf{\footnotesize{Nonlinear ORFBS algorithm with momentum}}
				\hrule
				
				\vskip 1mm
\noindent Let $S \in \mathcal{P}(\mathcal{H})$, $M_k: \mathcal{H} \rightarrow \mathcal{H}$ $(\forall k \in\mathbb{N})$,  $\gamma > 0$ and initial point $x_{0}, \ u_{0} \in \mathcal{H}.$ For $k=0,1,\cdots,$ do \\
\begin{equation*}
\label{algorithm_4}
\aligned
&y_k=(M_k+A)^{-1}(M_kx_k-(B+C)x_k+\gamma^{-1}u_k), \\
&x_{k+1}=y_k-\gamma S^{-1}Bx_k+\gamma S^{-1}Bx_{k-1}, \\
&u_{k+1}=(\gamma M_k-S)y_k-(\gamma M_k-S)x_k.
\endaligned
\end{equation*}
				\vskip 1mm
				
				\hrule
				
				\hspace*{\fill}
			\end{algorithm}
For the iterates $\{x_k\}_{k \in \mathbb{N}}$, $\{y_k\}_{k \in \mathbb{N}}$ and $\{u_k\}_{k \in \mathbb{N}}$ generated by Algorithm \ref{algorithm4},  $ \forall x \in \zer(A+B+C),$ $\varepsilon_5, \ \varepsilon_6>0 $  and $\alpha>0,$ we define the following function,
\begin{equation}\label{Sk}
\begin{aligned}
S_{k}(x):=&\|(x_k+\gamma S^{-1}Bx_{k-1})-(\gamma S^{-1}Bx+x)\|_{S}^2+2\langle u_k, x_k-x\rangle +L_{k-1}\|y_{k-1}-x_{k-1}\|_S^2 \\
&+\frac{\alpha+\gamma\mu(\varepsilon_5\mu+1)}{1-(1+\varepsilon_6)\gamma^2\mu^2}\|x_k-x_{k-1}\|_S^2,
\end{aligned}
\end{equation}
which helps to establish the weak convergence of the Algorithm \ref{algorithm4}.	

\begin{lemma}
\label{lemma4}
Suppose that Assumptions {\rm\ref{assumption1}} holds with $\gamma_k \equiv \gamma,$ $1-(1+\varepsilon_6)\gamma^2\mu^2 >0,$ for all $k \in \mathbb{N}$. Then for  $\{x_k\}_{k \in \mathbb{N}},$ $\{y_k\}_{k \in \mathbb{N}},$ and $\{u_k\}_{k \in \mathbb{N}}$ generated by Algorithm \ref{algorithm4} and  $\forall x^* \in \zer(A+B+C),$
\begin{equation}
S_{k+1}(x^*) \leq S_{k}(x^*)-(1- L_{k-1}-L_k-\gamma L_k^2\mu-\frac{\gamma\beta}{2}-\frac{\gamma}{\varepsilon_5}-\frac{(\alpha+\gamma\mu(\varepsilon_5\mu+1))(1+\frac{1}{\varepsilon_6})}{1-(1+\varepsilon_6)\gamma^2\mu^2})\|y_k-x_k\|_S^2 \label{lem4}
\end{equation}
holds for all $n \in \mathbb{N}_0.$
\end{lemma}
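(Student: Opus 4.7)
The plan is to mimic the structure of Lemma~\ref{lemma2} while absorbing the momentum correction $\gamma S^{-1}(Bx_k - Bx_{k-1})$ into a suitable auxiliary variable. I would begin from the inclusion $M_kx_k - M_ky_k - (B+C)x_k + \gamma^{-1}u_k \in Ay_k$ encoded by the definition of $y_k$, combine it with $-(B+C)x^* \in Ax^*$, apply the monotonicity of $A$, scale by $2\gamma$, and use the defining identity $\gamma(M_kx_k - M_ky_k) = (Sx_k - Sy_k) - u_{k+1}$ to obtain the fundamental variational inequality acting on $y_k - x^*$.

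The geometric heart of the argument is the substitution $v_k := x_k + \gamma S^{-1}Bx_{k-1}$ and $v^* := x^* + \gamma S^{-1}Bx^*$, which makes the first term of $S_k$ equal to $\|v_k - v^*\|_S^2$ and enjoys the crucial cancellation $v_{k+1} - v_k = y_k - x_k$ (the $\gamma S^{-1}(Bx_k-Bx_{k-1})$ contributed by $x_{k+1}-x_k$ is exactly undone when passing from $v_k$ to $v_{k+1}$). The three-point identity then yields
\[
\|v_{k+1}-v^*\|_S^2 = \|v_k-v^*\|_S^2 - \|y_k-x_k\|_S^2 + 2\langle y_k-x_k, v_{k+1}-v^*\rangle_S.
\]
Writing $v_{k+1}-v^* = (y_k-x^*) + \gamma S^{-1}(Bx_{k-1}-Bx^*)$, I would feed the variational inequality into the cross term. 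Monotonicity of $B$ annihilates $\langle Bx_k-Bx^*, x_k-x^*\rangle$ and the remaining $B$-pieces telescope to $-2\gamma\langle Bx_k-Bx_{k-1}, y_k-x_k\rangle$; weighted Young's with effective weight $\mu\varepsilon_5$ bounds this by $\tfrac{\gamma}{\varepsilon_5}\|y_k-x_k\|_S^2 + \gamma\mu^2\varepsilon_5\|x_k-x_{k-1}\|_S^2$, while Lemma~\ref{lem0} produces $\tfrac{\gamma\beta}{2}\|y_k-x_k\|_S^2$ from the cocoercivity of $C$.

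The more delicate step is handling the $u$-terms. I would split $2\langle u_k-u_{k+1}, y_k-x^*\rangle$ using the decompositions $y_k-x^* = (x_k-x^*)+(y_k-x_k)$ and $y_k-x^* = (x_{k+1}-x^*)+(y_k-x_{k+1})$, so that the diagonal pieces $2\langle u_k, x_k-x^*\rangle - 2\langle u_{k+1}, x_{k+1}-x^*\rangle$ assemble into the $S_k(x^*)-S_{k+1}(x^*)$ contribution. The cross term $2\langle u_k, y_k-x_k\rangle$ is controlled by $L_{k-1}(\|y_{k-1}-x_{k-1}\|_S^2 + \|y_k-x_k\|_S^2)$ via $\|u_k\|_{S^{-1}}\leq L_{k-1}\|y_{k-1}-x_{k-1}\|_S$. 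Crucially, the cross term $-2\langle u_{k+1}, y_k-x_{k+1}\rangle$ is treated using $y_k - x_{k+1} = \gamma S^{-1}(Bx_k-Bx_{k-1})$ together with $\|u_{k+1}\|_{S^{-1}}\leq L_k\|y_k-x_k\|_S$, giving $\leq 2L_k\gamma\mu\|y_k-x_k\|_S\|x_k-x_{k-1}\|_S$; invoking weighted Young's $2ab\leq\lambda a^2 + b^2/\lambda$ with the specific choice $\lambda=L_k$ yields $\gamma L_k^2\mu\|y_k-x_k\|_S^2 + \gamma\mu\|x_k-x_{k-1}\|_S^2$, and this is precisely where the distinctive $\gamma L_k^2\mu$ coefficient in \eqref{lem4} originates.

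To reach the stated form I would then add $L_k\|y_k-x_k\|_S^2 + K\|x_{k+1}-x_k\|_S^2$ to both sides, with $K := \tfrac{\alpha+\gamma\mu(\varepsilon_5\mu+1)}{1-(1+\varepsilon_6)\gamma^2\mu^2}$, and on the right-hand side bound $K\|x_{k+1}-x_k\|_S^2$ by expanding $x_{k+1}-x_k = (y_k-x_k)-\gamma S^{-1}(Bx_k-Bx_{k-1})$ and applying Young's with parameter $\varepsilon_6$, producing $K(1+1/\varepsilon_6)\|y_k-x_k\|_S^2 + K(1+\varepsilon_6)\gamma^2\mu^2\|x_k-x_{k-1}\|_S^2$. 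The main obstacle is verifying the algebraic closure: using the identity $\tfrac{(1+\varepsilon_6)\gamma^2\mu^2}{1-(1+\varepsilon_6)\gamma^2\mu^2} = \tfrac{1}{1-(1+\varepsilon_6)\gamma^2\mu^2}-1$ one checks that $K(1+\varepsilon_6)\gamma^2\mu^2 = K - (\alpha+\gamma\mu(\varepsilon_5\mu+1))$, so the total $\|x_k-x_{k-1}\|_S^2$ coefficient collapses to $\gamma\mu(\varepsilon_5\mu+1) + K(1+\varepsilon_6)\gamma^2\mu^2 = K-\alpha$, leaving only a residual $-\alpha\|x_k-x_{k-1}\|_S^2 \leq 0$ beyond the $K\|x_k-x_{k-1}\|_S^2$ term already present in $S_k(x^*)$; this residual is discarded, while the $\|y_k-x_k\|_S^2$ coefficient simultaneously assembles into exactly $-(1-L_{k-1}-L_k-\gamma L_k^2\mu-\tfrac{\gamma\beta}{2}-\tfrac{\gamma}{\varepsilon_5}-K(1+1/\varepsilon_6))$, which is \eqref{lem4}.
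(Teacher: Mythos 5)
Your proposal is correct and follows essentially the same route as the paper's proof: the same variational inequality from monotonicity of $A$ scaled by $2\gamma$, the same $L_{k-1}$/$L_k$ bounds on the momentum terms (with Young's parameter $L_k$ producing the $\gamma L_k^2\mu$ coefficient), Lemma~\ref{lem0} for $C$, the $\varepsilon_5$-Young bound on $-2\gamma\langle Bx_k-Bx_{k-1},y_k-x_k\rangle$, and the final $\varepsilon_6$-split of $\|x_{k+1}-x_k\|_S^2$ with the identical algebraic collapse of the $\|x_k-x_{k-1}\|_S^2$ coefficient. Your substitution $v_k=x_k+\gamma S^{-1}Bx_{k-1}$ with $v_{k+1}-v_k=y_k-x_k$ is merely a cleaner packaging of the paper's expansion of $\|y_k-x^*\|_S^2$ in \eqref{55} and the regrouping in \eqref{59}; all the estimates and constants coincide.
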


\begin{proof}
From Algorithm \ref{algorithm4}, we have
$$y_k=(M_k+A)^{-1}(M_kx_k-(B+C)x_k+\gamma^{-1}u_k),$$
which is equivalent to the inclusion
				\begin{equation}
					M_kx_k-M_ky_k-(B+C)x_k+\gamma^{-1}u_k \in Ay_k.     \label{51}
				\end{equation}

				Since $x^* \in \zer(A+B+C),$ it follows that
				\begin{equation}
					-(B+C)x^* \in Ax^*.                                 \label{52}
				\end{equation}
				Combining \eqref{51} and \eqref{52}, and using the monotonicity of $A$, we obtain				
                \begin{equation}
				0 \leq \langle M_kx_k-M_ky_k-(B+C)x_k+\gamma^{-1}u_k+(B+C)x^* , y_k-x^* \rangle. 					\label{53}
				\end{equation}
Multiplying both sides by $2\gamma$ and using the definition of $u_{k+1}$ yield
\begin{equation}
\aligned
0& \leq 2\langle Sx_k-Sy_k+u_k-u_{k+1}-\gamma(B+C)x_k+\gamma(B+C)x^* , y_k-x^* \rangle\\
& = 2\langle Sx_k-Sy_k, y_k-x^* \rangle +2\langle u_k-u_{k+1},y_k-x^* \rangle-2\gamma \langle Bx_k-Bx^*,y_k-x^*\rangle \\
& \quad -2\gamma\langle Cx_k-Cx^*,y_k-x^*\rangle. 	
\endaligned
\label{54}
\end{equation}

We now analyze each term on the right-hand side of \eqref{54}, separately. Using the definition of $x_{k+1}$, and the identity $2\langle x - y | y - z\rangle_S = \|x - z\|_S^2 - \|x - y\|_S^2 - \|y - z\|_S^2$, we obtain
				\begin{equation}
					\begin{aligned}
					   & 2\langle Sx_k -Sy_k, y_k-x^*\rangle \\
                       =&\|x_k-x^*\|_S^2-\|y_k-x^*\|_S^2-\|y_k-x_k\|_S^2 \\
                       =&\|x_k-x^*\|_S^2-\|x_{k+1}+\gamma S^{-1}Bx_k-\gamma S^{-1}Bx_{k-1}-x^*\|_S^2-\|y_k-x_k\|_S^2 \\
                       =&\|x_k-x^*\|_S^2-\|x_{k+1}-x^*\|_S^2-2\gamma\langle x_{k+1}-x^*,Bx_k-Bx_{k-1}\rangle \\
                       &-\gamma^2\|Bx_k-Bx_{k-1}\|_{S^{-1}}^2-\|y_k-x_k\|_S^2. \\
					\end{aligned}  \label{55}
				\end{equation}
Secondly, by using the Lipschitz property of $B$ and $u_{k}$, $\forall k \in \mathbb{N}$, we have
				\begin{equation}
					\begin{aligned}
					   & 2\langle u_k-u_{k+1}, y_k-x^*\rangle \\
                       =&2\langle u_k,x_k-x^*\rangle+2\langle u_k,y_k-x_k\rangle-2\langle u_{k+1},x_{k+1}-x^*\rangle \\
                       &-2\gamma\langle u_{k+1},S^{-1}Bx_k-S^{-1}Bx_{k-1}\rangle. \\
                       \leq &2\langle u_k,x_k-x^*\rangle+2\|u_k\|_{S^{-1}}\|y_k-x_k\|_S-2\langle u_{k+1},x_{k+1}-x^*\rangle \\
                       & +2\gamma\|u_{k+1}\|_{S^{-1}}\|Bx_k-Bx_{k-1}\|_{S^{-1}} \\
                       \leq&  2\langle u_k,x_k-x^*\rangle-2\langle u_{k+1},x_{k+1}-x^*\rangle+L_{k-1}\|y_{k-1}-x_{k-1}\|_S^2 \\
                       & +L_{k-1}\|y_{k}-x_{k}\|_S^2+\gamma L_k^2\mu\|y_k-x_k\|_S^2+\gamma\mu\|x_k-x_{k-1}\|_S^2. \\
					\end{aligned}  \label{56}
				\end{equation}
Moreover, by Lemma \ref{lem0}, we obtain
				\begin{equation}
					-2\gamma\langle Cx_k-Cx^* , y_k-x^* \rangle \leq \frac{\gamma\beta}{2}\|y_k-x_k\|_S^2. \label{57}
				\end{equation}
				Combining \eqref{54}--\eqref{57}, we get
				\begin{equation}
					\begin{aligned}
						& \|x_{k+1}-x^*\|_S^2+2\langle u_{k+1}, x_{k+1}-x^*\rangle\\
						\leq &\|x_k-x^*\|_S^2 +2\langle u_k,x_k-x^*\rangle+L_{k-1}\|y_{k-1}-x_{k-1}\|_S^2 \\
                        & -2\gamma \langle Bx_k-Bx^*,y_k-x^*\rangle-2\gamma\langle x_{k+1}-x^*,Bx_k-Bx_{k-1}\rangle-\gamma^2\|Bx_k-Bx_{k-1}\|_{S^{-1}}^2 \\
                        & + \gamma\mu\|x_k-x_{k-1}\|_S^2 -(1-L_{k-1}-\gamma L_k^2\mu-\frac{\gamma\beta}{2})\|y_k-x_k\|_S^2.
					\end{aligned} 					\label{58}
				\end{equation}
Finally, applying the monotonicity of $B$ and Young’s inequality, we derive
\begin{equation}
\begin{aligned}
&-2\gamma \langle Bx_k-Bx^*,y_k-x^*\rangle-2\gamma\langle x_{k+1}-x^*,Bx_k-Bx_{k-1}\rangle-\gamma^2\|Bx_k-Bx_{k-1}\|_{S^{-1}}^2 \\
= & -2\gamma \langle Bx_k-Bx^*,x_{k+1}-x^*\rangle -2\gamma \langle Bx_k-Bx^*,\gamma S^{-1}Bx_k-\gamma S^{-1}Bx_{k-1}\rangle \\
&+2\gamma\langle Bx_k-Bx_{k-1}, x^*-x_{k+1}\rangle-\gamma^2\|Bx_k-Bx_{k-1}\|_{S^{-1}}^2 \\
= & -2\gamma \langle Bx_k-Bx^*,x_{k+1}-x^*\rangle+\gamma^2(\|Bx_{k-1}-Bx^*\|_{S^{-1}}^2-\|Bx_k-Bx^*\|_{S^{-1}}^2 \\
& -\|Bx_k-Bx_{k-1}\|_{S^{-1}}^2)+2\gamma\langle Bx_k-Bx_{k-1},x_k-x_{k+1}\rangle+2\gamma\langle Bx_{k-1}-Bx^*,x_k-x^*\rangle\\
&+2\gamma\langle Bx^*-Bx_k,x_k-x^*\rangle-\gamma^2\|Bx_k-Bx_{k-1}\|_{S^{-1}}^2 \\
\leq & -2\gamma \langle Bx_k-Bx^*,x_{k+1}-x^*\rangle+2\gamma\langle Bx_{k-1}-Bx^*,x_k-x^*\rangle+\gamma^2\|Bx_{k-1}-Bx^*\|_{S^{-1}}^2 \\
&-\gamma^2\|Bx_k-Bx^*\|_{S^{-1}}^2+2\gamma\langle Bx_k-Bx_{k-1},x_k-x_{k+1}\rangle-2\gamma^2\|Bx_k-Bx_{k-1}\|_{S^{-1}}^2 \\
= & -2\gamma \langle Bx_k-Bx^*,x_{k+1}-x^*\rangle+2\gamma\langle Bx_{k-1}-Bx^*,x_k-x^*\rangle+\gamma^2\|Bx_{k-1}-Bx^*\|_{S^{-1}}^2 \\
&-\gamma^2\|Bx_k-Bx^*\|_{S^{-1}}^2+2\gamma\langle Bx_k-Bx_{k-1},x_k-y_k\rangle \\
\leq & -2\gamma \langle Bx_k-Bx^*,x_{k+1}-x^*\rangle+2\gamma\langle Bx_{k-1}-Bx^*,x_k-x^*\rangle+\gamma^2\|Bx_{k-1}-Bx^*\|_{S^{-1}}^2 \\
&-\gamma^2\|Bx_k-Bx^*\|_{S^{-1}}^2+\gamma\varepsilon_5\mu^2\|x_k-x_{k-1}\|_{S}^2+\frac{\gamma}{\varepsilon_5}\|y_k-x_k\|_S^2.\\
\end{aligned} 					\label{59}
\end{equation}
Substituting \eqref{59} into \eqref{58}, we obtain the following inequality:
\begin{equation*}
\begin{aligned}
& \|x_{k+1}-x^*\|_S^2+2\gamma \langle Bx_k-Bx^*,x_{k+1}-x^*\rangle+\gamma^2\|Bx_k-Bx^*\|_{S^{-1}}^2+2\langle u_{k+1}, x_{k+1}-x^*\rangle\\
\leq &\|x_k-x^*\|_S^2 +2\gamma\langle Bx_{k-1}-Bx^*,x_k-x^*\rangle+\gamma^2\|Bx_{k-1}-Bx^*\|_{S^{-1}}^2+2\langle u_k,x_k-x^*\rangle\\
& +L_{k-1}\|y_{k-1}-x_{k-1}\|_S^2+\gamma\mu(\varepsilon_5\mu+1)\|x_k-x_{k-1}\|_S^2\\
& -(1-L_{k-1}-\gamma L_k^2\mu-\frac{\gamma\beta}{2}-\frac{\gamma}{\varepsilon_5})\|y_k-x_k\|_S^2.
\end{aligned} 					\label{60}
\end{equation*}
Therefore,
\begin{equation}
\begin{aligned}
&\|(x_{k+1}+\gamma S^{-1}Bx_k)-(\gamma S^{-1}Bx^*+x^*)\|_{S}^2+2\langle u_{k+1}, x_{k+1}-x^*\rangle \\
&+\frac{\alpha+\gamma\mu(\varepsilon_5\mu+1)}{1-(1+\varepsilon_6)\gamma^2\mu^2}\|x_{k+1}-x_k\|_S^2\\
\leq &\|(x_k+\gamma S^{-1}Bx_{k-1})-(\gamma S^{-1}Bx^*+x^*)\|_{S}^2+2\langle u_k,x_k-x^*\rangle+L_{k-1}\|y_{k-1}-x_{k-1}\|_S^2\\
& +\gamma\mu(\varepsilon_5\mu+1)\|x_k-x_{k-1}\|_S^2-(1-L_{k-1}-\gamma L_k^2\mu-\frac{\gamma\beta}{2}-\frac{\gamma}{\varepsilon_5})\|y_k-x_k\|_S^2 \\
&+\frac{\alpha+\gamma\mu(\varepsilon_5\mu+1)}{1-(1+\varepsilon_6)\gamma^2\mu^2}\|x_{k+1}-x_k\|_S^2 \\
\leq &\|(x_k+\gamma S^{-1}Bx_{k-1})-(\gamma S^{-1}Bx^*+x^*)\|_{S}^2+2\langle u_k,x_k-x^*\rangle+L_{k-1}\|y_{k-1}-x_{k-1}\|_S^2\\
& +\gamma\mu(\varepsilon_5\mu+1)\|x_k-x_{k-1}\|_S^2-(1-L_{k-1}-\gamma L_k^2\mu-\frac{\gamma\beta}{2}-\frac{\gamma}{\varepsilon_5})\|y_k-x_k\|_S^2 \\
&+\frac{\alpha+\gamma\mu(\varepsilon_5\mu+1)}{1-(1+\varepsilon_6)\gamma^2\mu^2}((1+\varepsilon_6)\gamma^2\|Bx_k-Bx_{k-1}\|_{S^{-1}}^2+(1+\frac{1}{\varepsilon_6})\|y_k-x_k\|_S^2) \\
\leq &\|(x_k+\gamma S^{-1}Bx_{k-1})-(\gamma S^{-1}Bx^*+x^*)\|_{S}^2+2\langle u_k,x_k-x^*\rangle+L_{k-1}\|y_{k-1}-x_{k-1}\|_S^2\\
& +\frac{\alpha(1+\varepsilon_6)\gamma^2\mu^2+\gamma\mu(\varepsilon_5\mu+1)}{1-(1+\varepsilon_6)\gamma^2\mu^2}\|x_k-x_{k-1}\|_S^2\\
&-(1-L_{k-1}-\gamma L_k^2\mu-\frac{\gamma\beta}{2}-\frac{\gamma}{\varepsilon_5}-\frac{(\alpha+\gamma\mu(\varepsilon_5\mu+1))(1+\frac{1}{\varepsilon_6})}{1-(1+\varepsilon_6)\gamma^2\mu^2})\|y_k-x_k\|_S^2 \\
\leq &\|(x_k+\gamma S^{-1}Bx_{k-1})-(\gamma S^{-1}Bx^*+x^*)\|_{S}^2+2\langle u_k,x_k-x^*\rangle+L_{k-1}\|y_{k-1}-x_{k-1}\|_S^2\\
& +\frac{\alpha+\gamma\mu(\varepsilon_5\mu+1)}{1-(1+\varepsilon_6)\gamma^2\mu^2}\|x_k-x_{k-1}\|_S^2\\
&-(1-L_{k-1}-\gamma L_k^2\mu-\frac{\gamma\beta}{2}-\frac{\gamma}{\varepsilon_5}-\frac{(\alpha+\gamma\mu(\varepsilon_5\mu+1))(1+\frac{1}{\varepsilon_6})}{1-(1+\varepsilon_6)\gamma^2\mu^2})\|y_k-x_k\|_S^2
\end{aligned} 					\label{61}
\end{equation}
Then \eqref{lem4} is deduced by the definition of $S_k(x^*)$ in \eqref{Sk}.
\end{proof}

\begin{theorem}
\label{theorem4}
{
\noindent
Suppose that Assumptions {\rm\ref{assumption1}} holds with $\gamma_k \equiv \gamma$ for all $k \in \mathbb{N}$, let $\varepsilon_5, \ \varepsilon_6, \ \varepsilon_7>0$, $\alpha >0$ and there exists a constant $\epsilon >0$ such that the following conditions hold:
\vskip 1mm
\begin{itemize}
\item[{\rm(i)}] $1- L_{k-1}-L_k-\gamma L_k^2\mu-\frac{\gamma\beta}{2}-\frac{\gamma}{\varepsilon_5}-\frac{(\alpha+\gamma\mu(\varepsilon_5\mu+1))(1+\frac{1}{\varepsilon_6})}{1-(1+\varepsilon_6)\gamma^2\mu^2} \geq \epsilon,$	
\noindent
\item[{\rm(ii)}] $1-\varepsilon_7\mu(\varepsilon_5\mu+1)>0,$
\noindent
\item[{\rm(iii)}]$\max\{\varepsilon_7, \sqrt{\frac{1-\varepsilon_7\mu(\varepsilon_5\mu+1)}{(1+\varepsilon_6)\mu^2}}\} \leq \gamma \leq \sqrt{\frac{1}{(1+\varepsilon_6)\mu^2}}.$
\end{itemize}
Then the sequence $\{x_k\}_{k \in \mathbb{N}}$ generated by Algorithm \ref{algorithm4} converges weakly to a point $x^*$ in $\zer(A+B+C)$.
}
\end{theorem}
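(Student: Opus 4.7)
The proof will adapt the Lyapunov strategy from Theorems \ref{theorem2} and \ref{theorem3} to the function $S_k$ defined in \eqref{Sk}. My plan is: (a) show $S_k(x^*) \geq 0$, so that, by Lemma \ref{lemma4} and condition (i), $\{S_k(x^*)\}$ is nonnegative, nonincreasing, and convergent, while telescoping yields $\sum_k \|y_k - x_k\|_S^2 < \infty$; (b) use the identity $x_{k+1} - y_k = -\gamma S^{-1}(Bx_k - Bx_{k-1})$ together with $\gamma\mu < 1$ (implied by condition (iii)) to propagate $\|x_{k+1} - x_k\|_S \to 0$ and then $\|u_k\|_{S^{-1}} \leq L_{k-1}\|x_k - x_{k-1}\|_S \to 0$; (c) identify every weak cluster point of the bounded sequence $\{x_k\}$ with a zero of $A+B+C$; (d) apply Lemma \ref{lem1}.

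For (a), I would expand
\begin{equation*}
\|(x_k + \gamma S^{-1}Bx_{k-1}) - (\gamma S^{-1}Bx^* + x^*)\|_S^2 = \|x_k - x^*\|_S^2 + 2\gamma\langle x_k - x^*, Bx_{k-1} - Bx^*\rangle + \gamma^2\|Bx_{k-1} - Bx^*\|_{S^{-1}}^2,
\end{equation*}
apply the splitting $\langle x_k - x^*, Bx_{k-1} - Bx^*\rangle \geq \langle x_k - x_{k-1}, Bx_{k-1} - Bx^*\rangle$ (monotonicity of $B$), then Young's inequality to produce a lower bound of the form $\|x_k - x^*\|_S^2 - \|x_k - x_{k-1}\|_S^2$; the bound $\|u_k\|_{S^{-1}} \leq L_{k-1}\|x_k - x_{k-1}\|_S$ together with Young's inequality handles $2\langle u_k, x_k - x^*\rangle$; finally all residual negative multiples of $\|x_k - x_{k-1}\|_S^2$ are absorbed by the coefficient $\frac{\alpha+\gamma\mu(\varepsilon_5\mu+1)}{1-(1+\varepsilon_6)\gamma^2\mu^2}$, which conditions (ii) and (iii) together with the freedom in $\alpha$ make sufficiently large, producing $S_k(x^*) \geq (1 - L_{k-1})\|x_k - x^*\|_S^2 \geq 0$. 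For (c), I would mimic the proof of Theorem \ref{theorem2} by setting
\begin{equation*}
\Delta_k := M_k x_k - M_k y_k + (B + C)(y_k - x_k) + \gamma^{-1} u_k,
\end{equation*}
so that \eqref{51} yields $(y_k, \Delta_k) \in \gra(A+B+C)$, and the $2\gamma^{-1}$-Lipschitz continuity of $M_k$, the $\mu$-Lipschitz continuity of $B$, the $\beta^{-1}$-cocoercivity of $C$, and the bound on $\|u_k\|_{S^{-1}}$ combine to give $\|\Delta_k\|_{S^{-1}} \to 0$. Since $\|y_k - x_k\|_S \to 0$, any subsequential weak limit $\bar x$ of $\{x_k\}$ is also a weak limit of $\{y_{k_n}\}$, and the maximal monotonicity of $A+B+C$ (via Corollary 25.5 in \cite{BC2011} and Lemma 2.1 in \cite{Showalter}) together with Lemma \ref{lem3} forces $\bar x \in \zer(A+B+C)$.

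The main obstacle is step (d): $S_k(x^*)$ controls the shifted quantity $\|w_k - w^*\|_S^2$ with $w_k := x_k + \gamma S^{-1}Bx_{k-1}$ and $w^* := x^* + \gamma S^{-1}Bx^*$, rather than $\|x_k - x^*\|_S^2$ directly, so Lemma \ref{lem1} cannot be applied to $\{x_k\}$ in the usual way. I would circumvent this by showing that the correction summands $2\langle u_k, x_k - x^*\rangle$, $L_{k-1}\|y_{k-1} - x_{k-1}\|_S^2$, and $\frac{\alpha+\gamma\mu(\varepsilon_5\mu+1)}{1-(1+\varepsilon_6)\gamma^2\mu^2}\|x_k - x_{k-1}\|_S^2$ in $S_k$ all vanish as $k \to \infty$ (using step (b) and the Lipschitz continuity of $B$), so that $\lim_k \|w_k - w^*\|_S^2$ exists for each $x^* \in \zer(A+B+C)$. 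The asymptotic regularity $\|w_{k+1} - w_k\|_S = \|y_k - x_k\|_S \to 0$ then permits Lemma \ref{lem1} to be applied to $\{w_k\}$ with cluster set $\{x^* + \gamma S^{-1}Bx^* : x^* \in \zer(A+B+C)\}$, and the weak convergence is transferred back to $\{x_k\}$ via $x_k = w_k - \gamma S^{-1}Bx_{k-1}$ together with the asymptotic regularity $\|x_k - x_{k-1}\|_S \to 0$.
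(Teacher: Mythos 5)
Your steps (a)--(c) coincide with the paper's own argument: the paper likewise shows $S_k(x^*)\geq 0$ by expanding the shifted norm and absorbing the negative multiple of $\|x_k-x_{k-1}\|_S^2$ into the coefficient $\frac{\alpha+\gamma\mu(\varepsilon_5\mu+1)}{1-(1+\varepsilon_6)\gamma^2\mu^2}$ via conditions (ii)--(iii) (retaining the term $\gamma(\gamma-\varepsilon_7)\|Bx_{k-1}-Bx^*\|_{S^{-1}}^2$, which is why $\gamma\geq\varepsilon_7$ is imposed), then telescopes \eqref{lem4} to get $\|y_k-x_k\|_S\to 0$, obtains $\|x_{k+1}-x_k\|_S\to 0$ from the refined inequality \eqref{61}, and identifies cluster points through the same $\Delta_k$ and the maximal monotonicity of $A+B+C$. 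Two small points: for Algorithm \ref{algorithm4} the momentum bound reads $\|u_k\|_{S^{-1}}\leq L_{k-1}\|y_{k-1}-x_{k-1}\|_S$, not $L_{k-1}\|x_k-x_{k-1}\|_S$ (harmless, since $\|y_{k-1}-x_{k-1}\|_S\to 0$ as well); and the paper derives boundedness of $\{x_k\}$ from the resolvent identity $x_k=J_{\gamma S^{-1}B}\bigl(a_k-\gamma(S^{-1}Bx_{k-1}-S^{-1}Bx_k)\bigr)$ rather than from your (equally valid) lower bound on $S_k(x^*)$.

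The substantive issue is your step (d). You are right that this is the delicate point: the paper disposes of it with ``the remaining part proceeds similarly to Theorem \ref{theorem2}'' even though here $\lim_k S_k(x^*)=\lim_k\|w_k-w^*\|_S^2$ with $w_k=x_k+\gamma S^{-1}Bx_{k-1}$, so Lemma \ref{lem1} cannot be applied to $\{x_k\}$ as it was there. But your repair does not close the gap. To apply Lemma \ref{lem1} to $\{w_k\}$ with $D=\{x^*+\gamma S^{-1}Bx^*:\ x^*\in\zer(A+B+C)\}$ you must verify that every weak cluster point of $\{w_k\}$ lies in $D$, i.e.\ that $w_{k_n}\rightharpoonup \bar x+\gamma S^{-1}B\bar x$ whenever $x_{k_n}\rightharpoonup\bar x$. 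Since $w_{k_n}=x_{k_n}+\gamma S^{-1}Bx_{k_n-1}$ and $x_{k_n-1}\rightharpoonup\bar x$, this amounts to $Bx_{k_n-1}\rightharpoonup B\bar x$, which a merely monotone, Lipschitz continuous $B$ need not satisfy; weak-to-weak sequential continuity is an additional hypothesis, routinely imposed in the variational-inequality literature for exactly this purpose. The same obstruction reappears when you ``transfer back'' from $w_k\rightharpoonup\bar w$ to $x_k\rightharpoonup\bar x$ via $x_k=w_k-\gamma S^{-1}Bx_{k-1}$. So as written, step (d) requires either an extra continuity assumption on $B$ or a genuinely different uniqueness-of-cluster-point argument; to be fair, the paper's own text does not supply one either.
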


\begin{proof}
Let $x^{*}\in \zer(A+B+C)$. From the definition of $S_k(x^*)$ in \eqref{Sk}, the Lipschitz continuity of $\gamma M_k-S$, and the Cauchy-Schwarz inequality, there exists $\varepsilon_7>0$ such that
\begin{equation*}
\begin{aligned}
S_{k}(x^*)=&\|(x_k+\gamma S^{-1}Bx_{k-1})-(\gamma S^{-1}Bx^*+x^*)\|_{S}^2+2\langle u_k, x_k-x\rangle +L_{k-1}\|y_{k-1}-x_{k-1}\|_S^2 \\
&+\frac{\alpha+\gamma\mu(\varepsilon_5\mu+1)}{1-(1+\varepsilon_6)\gamma^2\mu^2}\|x_k-x_{k-1}\|_S^2 \\
\geq & \|x_k-x^*\|_{S}^2+2\gamma\langle x_k-x^*, Bx_{k-1}-Bx^* \rangle+\gamma^2\|Bx_{k-1}-Bx^*\|_{S^{-1}}^2 \\
& -L_{k-1}\|y_{k-1}-x_{k-1}\|_S^2-L_{k-1} \|x_k-x^*\|_{S}^2+L_{k-1}\|y_{k-1}-x_{k-1}\|_S^2 \\
&+\frac{\alpha+\gamma\mu(\varepsilon_5\mu+1)}{1-(1+\varepsilon_6)\gamma^2\mu^2}\|x_k-x_{k-1}\|_S^2 \\
=&(1-L_{k-1})\|x_k-x^*\|_{S}^2+2\gamma\langle (x_{k-1}-x^*)-(x_{k-1}-x_k), Bx_{k-1}-Bx^* \rangle \\
&+\gamma^2\|Bx_{k-1}-Bx^*\|_{S^{-1}}^2+\frac{\alpha+\gamma\mu(\varepsilon_5\mu+1)}{1-(1+\varepsilon_6)\gamma^2\mu^2}\|x_k-x_{k-1}\|_S^2 \\
\geq & (1-L_{k-1})\|x_k-x^*\|_{S}^2-\frac{\gamma}{\varepsilon_7}\|x_k-x_{k-1}\|_S^2 -\gamma\varepsilon_7\|Bx_{k-1}-Bx^*\|_{S^{-1}}^2 \\
&+\gamma^2\|Bx_{k-1}-Bx^*\|_{S^{-1}}^2+\frac{\gamma\mu(\varepsilon_5\mu+1)}{1-(1+\varepsilon_6)\gamma^2\mu^2}\|x_k-x_{k-1}\|_S^2 \\
= & (1-L_{k-1})\|x_k-x^*\|_{S}^2+(\frac{\gamma\mu(\varepsilon_5\mu+1)}{1-(1+\varepsilon_6)\gamma^2\mu^2}-\frac{\gamma}{\varepsilon_7})\|x_k-x_{k-1}\|_S^2 \\
&+ \gamma(\gamma-\varepsilon_7)\|Bx_{k-1}-Bx^*\|_{S^{-1}}^2. \\
\end{aligned}  \label{63}
\end{equation*}
Thanks to the conditions (ii), (iii), the sequence $\{S_{k}(x^*)\}_{k \in \mathbb{N}}$ is nonnegative, $\forall k \in \mathbb{N}.$  Since it is also nonincreasing, it is convergent. Let $N \in \mathbb{N}_0.$ Summing both sides of \eqref{lem4} over $k = 1,2,\cdots,N$ yields
\begin{equation*}
\begin{aligned}
&\sum_{k=1}^N(1- L_{k-1}-L_k-\gamma L_k^2\mu-\frac{\gamma\beta}{2}-\frac{\gamma}{\varepsilon_5}-\frac{(\alpha+\gamma\mu(\varepsilon_5\mu+1))(1+\frac{1}{\varepsilon_6})}{1-(1+\varepsilon_6)\gamma^2\mu^2})\|y_k-x_k\|_S^2 \\ &\leq S_{1}(x^*)- S_{N+1}(x^*)\\
& \leq S_{1}(x^*) < + \infty.
\end{aligned}  \label{64}
\end{equation*}
From condition (i), we have that $\lim\limits_{k\rightarrow \infty}\|y_k-x_k\|_S=0.$  Let $a_k=x_k+\gamma S^{-1}Bx_{k-1}$, and $a=x^*+\gamma S^{-1}Bx^*$. The following inequalities can be derived from \eqref{61}.
\begin{equation*}
\begin{aligned}
&\|a_{k+1}-a\|_S^2 +2\langle u_{k+1}, x_{k+1}-x^*\rangle +L_{k}\|y_{k}-x_{k}\|_S^2 \\
&+(\alpha+\frac{\alpha(1+\varepsilon_6)\gamma^2\mu^2+\gamma\mu(\varepsilon_5\mu+1)}{1-(1+\varepsilon_6)\gamma^2\mu^2})\|x_{k+1}-x_{k}\|_S^2, \\
\leq &\|a_{k}-a\|_S^2 +2\langle u_k, x_k-x^*\rangle +L_{k-1}\|y_{k-1}-x_{k-1}\|_S^2 \\
& +\frac{\alpha(1+\varepsilon_6)\gamma^2\mu^2+\gamma\mu(\varepsilon_5\mu+1)}{1-(1+\varepsilon_6)\gamma^2\mu^2}\|x_k-x_{k-1}\|_S^2 \\
& -(1- L_{k-1}-L_k-\gamma L_k^2\mu-\frac{\gamma\beta}{2}-\frac{\gamma}{\varepsilon_5}-\frac{(\alpha+\gamma\mu(\varepsilon_5\mu+1))(1+\frac{1}{\varepsilon_6})}{1-(1+\varepsilon_6)\gamma^2\mu^2})\|y_k-x_k\|_S^2 \\
\leq &\|a_{k}-a\|_S^2 +2\langle u_k, x_k-x^*\rangle +L_{k-1}\|y_{k-1}-x_{k-1}\|_S^2 \\
& +\frac{\alpha(1+\varepsilon_6)\gamma^2\mu^2+\gamma\mu(\varepsilon_5\mu+1)}{1-(1+\varepsilon_6)\gamma^2\mu^2}\|x_k-x_{k-1}\|_S^2 \\
& -(1- L_{k-1}-L_k-\gamma L_k^2\mu-\frac{\gamma\beta}{2}-\frac{\gamma}{\varepsilon_5}-\frac{(\alpha+\gamma\mu(\varepsilon_5\mu+1))(1+\frac{1}{\varepsilon_6})}{1-(1+\varepsilon_6)\gamma^2\mu^2})\|y_k-x_k\|_S^2.
 \end{aligned} \label{65}
\end{equation*}
The above inequalities imply that
\begin{equation*}
\begin{aligned}
\begin{split}
&\lim_{k\rightarrow \infty}S_k(x^*) \ \hbox{exists}, \\
&\lim_{k\rightarrow \infty}\|y_{k}-x_{k}\|_S^2=0, \\
& \lim_{k\rightarrow \infty}\|x_{k+1}-x_{k}\|_S^2=0, \\
\end{split}  \label{66}
\end{aligned}
\end{equation*}
and
\begin{equation*}
\lim\limits_{k\rightarrow \infty}S_k(x^*)=\lim\limits_{k\rightarrow \infty}\|a_k-a\|_S^2. \label{67}
\end{equation*}
On the other hand, the definition of $a_k$ yields that
\begin{equation*}
a_k = (I + \gamma S^{-1} B)x_k+\gamma S^{-1} Bx_{k-1}-\gamma S^{-1} Bx_k,
\end{equation*}
and
\begin{equation*}
x_k = J_{\gamma S^{-1} B}(a_k - \gamma(S^{-1} Bx_{k-1}- S^{-1} Bx_{k})).
\end{equation*}
Since  $\lim\limits_{k\rightarrow \infty}\|Bx_{k-1}-Bx_{k}\|_{S^{-1}}=0$ and $J_{\gamma S^{-1} B}$ is nonexpansive, it follows that the sequence $\{x_k\}_{k \in \mathbb{N}}$ is bounded. Consequently, the sequence $\{x_k\}_{k \in \mathbb{N}}$ has at least one weakly convergent subsequence. Without loss of generality, we may assume that $x_{k_n} \rightharpoonup \bar{x} \in \mathcal{H}$ as $n\rightarrow \infty$.
Define
\begin{equation*}
\Delta_k := M_kx_k-M_ky_k-(B+C)(x_k-y_k)+\gamma^{-1}u_k,
\end{equation*}
It follows from \eqref{51}  that $(y_k , \Delta_k) \in \gra(A+B+C)$, for all $k \in \mathbb{N}.$ Since $M_k$ is $2\gamma^{-1}$- Lipschitz continuous w.r.t. $S$, $B$ is $\mu$-Lipschitz continuous w.r.t. $S$, and $C$ is $\beta^{-1}$-cocoercive w.r.t. $S$, we obtain that
\begin{equation*}
\aligned \| \Delta_k \|_{S^{-1}}
& \leq \|M_kx_k-M_ky_k\|_{S^{-1}}+\|(B+C)(x_k-y_k)\|_{S^{-1}}+\frac{1}{\gamma}\|u_k\|_{S^{-1}}  \\
& \leq \frac{2}{\gamma}\|x_k-y_k\|_S+(\mu+\beta)\|x_k-y_k\|_S+\frac{L_{k-1}}{\gamma}\|x_{k-1}-y_{k-1}\|_S.
\endaligned
\end{equation*}
Based on the preceding arguments, it follows that $\Delta_{k_n}\rightarrow 0.$ The remaining part of the proof proceeds similarly to that of Theorem \ref{theorem2}, and is therefore omitted.
\end{proof}

\begin{remark}
	\rm We now present two special cases of Algorithm \ref{algorithm4}.
	\begin{itemize}
		\item[{\rm (1)}] When $S=\Id$, $\gamma_k \equiv \gamma$, and $M_k\equiv\frac{\Id}{\gamma}$, we have $L_k=0$ and $u_k=0.$  Hence, Algorithm \ref{algorithm4} reduces to the ORFBS algorithm \eqref{ORFBS}. Furthermore, the conditions in Theorem \ref{theorem4} become
		\begin{itemize}
			\item[{\rm(i)}] $1-\frac{\gamma\beta}{2}-\frac{\gamma}{\varepsilon_5}-\frac{(\alpha+\gamma\mu(\varepsilon_5\mu+1))(1+\frac{1}{\varepsilon_6})}{1-(1+\varepsilon_6)\gamma^2\mu^2} \geq \epsilon,$	
			\noindent
			\item[{\rm(ii)}] $1-\varepsilon_7\mu(\varepsilon_5\mu+1)>0,$
			\noindent
			\item[{\rm(iii)}]$\max\{\varepsilon_7, \sqrt{\frac{1-\varepsilon_7\mu(\varepsilon_5\mu+1)}{(1+\varepsilon_6)\mu^2}}\} \leq \gamma \leq \sqrt{\frac{1}{(1+\varepsilon_6)\mu^2}}.$
		\end{itemize}
		
		\item[{\rm (2)}] When $A=A_1+A_2,$ where $A_1:\mathcal{H} \rightarrow 2^{\mathcal{H}}$ is maximally monotone, $A_2:\mathcal{H} \rightarrow 2^{\mathcal{H}}$ is $L$-Lipschitz continuous, and $A_1+A_2$ is maximally monotone. Furthermore, let $S=\Id,$ $\gamma_k\equiv \gamma$, and $M_k\equiv M=\frac{\Id}{\gamma}-A_2$, $\forall k \in \mathbb{N}$. Then Algorithm \ref{algorithm4} becomes
		\begin{equation}
			\left\{
			\begin{array}{lr}
				y_k =J_{\gamma A_1}(x_k-\gamma (A_2+B+C)x_k-\gamma(A_2y_{k-1}-A_2x_{k-1})), & \\
				x_{k+1}=y_k-\gamma Bx_k+\gamma Bx_{k-1}.& \\
			\end{array}
			\right.			\label{new-ORFBS}
		\end{equation} 		
		Now, $\gamma M-\Id$ is $\gamma L$-Lipschitz continuous, the conditions in Theorem \ref{theorem4} become
\begin{itemize}
	\item[{\rm(i)}] $1-2\gamma L-\gamma^3 L^2\mu-\frac{\gamma\beta}{2}-\frac{\gamma}{\varepsilon_5}-\frac{(\alpha+\gamma\mu(\varepsilon_5\mu+1))(1+\frac{1}{\varepsilon_6})}{1-(1+\varepsilon_6)\gamma^2\mu^2} \geq \epsilon,$	
	\noindent
	\item[{\rm(ii)}] $1-\varepsilon_7\mu(\varepsilon_5\mu+1)>0,$
	\noindent
	\item[{\rm(iii)}]$\max\{\varepsilon_7, \sqrt{\frac{1-\varepsilon_7\mu(\varepsilon_5\mu+1)}{(1+\varepsilon_6)\mu^2}}\} \leq \gamma \leq \sqrt{\frac{1}{(1+\varepsilon_6)\mu^2}}.$
\end{itemize}
	\end{itemize}
\end{remark}

\subsubsection{Linear convergence}
In this subsection, we establish the $R$-linear convergence of the sequence $\{x_k\}_{k \in \mathbb{N}}$ generated by Algorithm \ref{algorithm4}, assuming that $A$ is strongly monotone.

\begin{theorem}
	\label{linear3}
	{
		\noindent
		Suppose that Assumption {\rm\ref{assumption1}} holds and that $A$ is $\rho$-strongly monotone.  Assume further that the conditions in Theorem \ref{theorem4} are satisfied and that there exist constants $0<\varepsilon_7 <1,$ $\varepsilon_8>0$  such that $0<\varepsilon_7 <\frac{1}{\gamma},$ and $\frac{\gamma}{A+\alpha}<\varepsilon_8<\gamma$,  and define
		\begin{equation*}
			\begin{aligned}
				t=\min\big\{ &\frac{\sqrt{(1+\gamma\mu(2+\gamma\mu)+L_k)^2+8\gamma^2\rho\mu(1+\gamma\mu)(1-\gamma\varepsilon_7)}-(1+\gamma\mu(2+\gamma\mu)+L_k)}{2\gamma\mu(1+\gamma\mu)}, \\
				&  \frac{\nu}{2L_k}, \frac{\alpha-2\gamma^2\rho\mu^2(\frac{1}{\varepsilon_7}-\gamma)}{A+2\gamma^2\rho\mu^2(\frac{1}{\varepsilon_7}-\gamma)}, \ \frac{\varepsilon_8(A+\alpha)-\gamma}{\gamma}\big\},
			\end{aligned}
		\end{equation*}
		where $\nu=1- L_{k-1}-L_k-\gamma L_k^2\mu-\frac{\gamma\beta}{2}-\frac{\gamma}{\varepsilon_5}-\frac{(\alpha+\gamma\mu(\varepsilon_5\mu+1))(1+\frac{1}{\varepsilon_6})}{1-(1+\varepsilon_6)\gamma^2\mu^2},${\tiny } $A=\frac{\alpha(1+\varepsilon_6)\gamma^2\mu^2+\gamma\mu(\varepsilon_5\mu+1)}{1-(1+\varepsilon_6)\gamma^2\mu^2}$,  with $\alpha > \max\{2\gamma^2\rho\mu^2(\frac{1}{\varepsilon_7}-\gamma), 1-(1+\varepsilon_6)\gamma^2\mu^2-\gamma\mu(\varepsilon_5\mu+1)\}.$
		Then the sequence $\{x_k\}_{k \in \mathbb{N}}$ generated by Algorithm \ref{algorithm4} converges $R$-linearly to a point $x^*$ in $\zer(A+B+C)$.
	}
\end{theorem}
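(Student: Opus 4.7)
The overall strategy mirrors that of Theorems \ref{linear1} and \ref{linear2}: we inject the strong monotonicity of $A$ into the key inclusion step of Lemma \ref{lemma4}, convert the one-step recursion into a contraction $(1+t)(\cdots)_{k+1} \le (\cdots)_k$ on an auxiliary Lyapunov quantity, and then use the lower bound on $S_k(x^*)$ already established inside the proof of Theorem \ref{theorem4} to pass to $R$-linear convergence on $\|x_k-x^*\|_S$. The first step is to replace the monotonicity inequality \eqref{53} by
\begin{equation*}
\rho\|y_k-x^*\|_S^2 \le \langle M_kx_k-M_ky_k-(B+C)x_k+\gamma^{-1}u_k+(B+C)x^*,\, y_k-x^*\rangle,
\end{equation*}
then run through exactly the same manipulations \eqref{54}--\eqref{61} as in Lemma \ref{lemma4}, carrying an extra factor $2\gamma\rho\|y_k-x^*\|_S^2$ on the left-hand side. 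Using $y_k = x_{k+1}+\gamma S^{-1}(Bx_k-Bx_{k-1})$ together with Young's inequality with parameter $\varepsilon_7 \in (0,1/\gamma)$, this extra term is bounded below by $2\gamma\rho(1-\gamma\varepsilon_7)\|a_{k+1}-a\|_S^2 - 2\gamma^2\rho\mu^2(\varepsilon_7^{-1}-\gamma)\|x_k-x_{k-1}\|_S^2$, where $a_k = x_k+\gamma S^{-1}Bx_{k-1}$ and $a = x^*+\gamma S^{-1}Bx^*$ as in the proof of Theorem \ref{theorem4}.

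Next, I would define the auxiliary quantity
\begin{equation*}
c_k(x^*) := 2\langle u_k, x_k-x^*\rangle + L_{k-1}\|y_{k-1}-x_{k-1}\|_S^2 + \bigl(A+\alpha\bigr)\|x_k-x_{k-1}\|_S^2,
\end{equation*}
with $A=\tfrac{\alpha(1+\varepsilon_6)\gamma^2\mu^2+\gamma\mu(\varepsilon_5\mu+1)}{1-(1+\varepsilon_6)\gamma^2\mu^2}$, so that the strengthened recursion reads
\begin{equation*}
\bigl(1+2\gamma\rho(1-\gamma\varepsilon_7)\bigr)\|a_{k+1}-a\|_S^2 + c_{k+1}(x^*) + \nu\|y_k-x_k\|_S^2 \le \|a_k-a\|_S^2 + c_k(x^*),
\end{equation*}
after absorbing the $-2\gamma^2\rho\mu^2(\varepsilon_7^{-1}-\gamma)\|x_k-x_{k-1}\|_S^2$ into the $(A+\alpha)\|x_k-x_{k-1}\|_S^2$ block via the threshold $\alpha>2\gamma^2\rho\mu^2(\varepsilon_7^{-1}-\gamma)$. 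Then $c_{k+1}(x^*)$ is bounded from above using the Lipschitz continuity of $\gamma M_k-S$ (on the $u_{k+1}$ term, with Young's inequality involving $\varepsilon_8$) and the Lipschitz continuity of $B$ (on the $\|y_k-x_k\|_S^2$ block, via $\|x_{k+1}-x_k\|_S \le \|y_k-x_k\|_S + \gamma\mu\|x_k-x_{k-1}\|_S$), producing a bound of the form
\begin{equation*}
c_{k+1}(x^*) \le \alpha_1\|a_{k+1}-a\|_S^2 + \alpha_2 L_k\|y_k-x_k\|_S^2 + \alpha_3(A+\alpha)\|x_{k+1}-x_k\|_S^2 + \tfrac{\gamma}{\varepsilon_8}\|\ldots\|.
\end{equation*}

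Each of the four branches in the definition of $t$ is chosen precisely so that $t$ times the corresponding coefficient in $c_{k+1}(x^*)$ is dominated by the respective surplus on the left: the first branch (the quadratic root) equalises the $\|a_{k+1}-a\|_S^2$ coefficient with the strong-monotonicity surplus $2\gamma\rho(1-\gamma\varepsilon_7)$ while accounting for the coupled Lipschitz-$B$ cross terms; the second $\nu/(2L_k)$ controls the $\|y_{k-1}-x_{k-1}\|_S^2$ block against $\nu\|y_k-x_k\|_S^2$; the third balances $(A+\alpha)\|x_k-x_{k-1}\|_S^2$ after subtracting the strong-monotonicity leakage $2\gamma^2\rho\mu^2(\varepsilon_7^{-1}-\gamma)$; and the fourth handles the $u_{k+1}$ Young-type term through $\varepsilon_8>\gamma/(A+\alpha)$. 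Combining gives $t\,c_{k+1}(x^*)\le 2\gamma\rho(1-\gamma\varepsilon_7)\|a_{k+1}-a\|_S^2 + \nu\|y_k-x_k\|_S^2 + (\text{decrease in }\|x_{k+1}-x_k\|_S^2)$, and hence
\begin{equation*}
(1+t)\bigl(\|a_{k+1}-a\|_S^2+c_{k+1}(x^*)\bigr) \le \|a_k-a\|_S^2+c_k(x^*).
\end{equation*}
Finally, the lower bound on $S_k(x^*)$ derived inside Theorem \ref{theorem4} ($S_k(x^*) \ge k_3 \|x_k-x^*\|_S^2$ for some $k_3>0$ under conditions (ii)--(iii)) transfers to $\|a_k-a\|_S^2+c_k(x^*) \ge k_3'\|x_k-x^*\|_S^2$, which together with iteration of the one-step contraction yields $\|x_k-x^*\|_S = O((1+t)^{-k/2})$.

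The main obstacle is the bookkeeping in the first branch of $t$: the $\|a_{k+1}-a\|_S^2$ term produced by the strong-monotonicity surplus must be reconciled with the coefficient of $\|a_{k+1}-a\|_S^2$ appearing in the upper bound of $c_{k+1}(x^*)$, and this coefficient itself depends on $\gamma\mu$ through the identity $\|y_k-x^*\|_S^2 \le (1+\gamma\mu)\|a_{k+1}-a\|_S^2 + \gamma\mu(1+\gamma\mu)\|x_k-x_{k-1}\|_S^2$ (plus a symmetric Young split). Solving the resulting quadratic inequality in $t$ is what produces the square-root expression, and keeping all intermediate Young parameters consistent with those fixed earlier in the proof (especially $\varepsilon_5,\varepsilon_6,\varepsilon_7,\varepsilon_8$) is the delicate portion; once those are in place the remainder is parallel to Theorems \ref{linear1} and \ref{linear2}.
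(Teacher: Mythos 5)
Your proposal follows essentially the same route as the paper's proof: inject the $\rho$-strong monotonicity into the inclusion \eqref{53} to gain a surplus $2\gamma\rho\|y_k-x^*\|_S^2$, split that surplus with a Young parameter $\varepsilon_7$, absorb the leakage into the $\alpha$-weighted $\|x_k-x_{k-1}\|_S^2$ block, bound the shifted auxiliary quantity so that each branch of $t$ dominates one coefficient, and iterate the resulting $(1+t)$-contraction against a coercivity lower bound. Two bookkeeping points in your sketch differ from what actually closes the argument: first, since $y_k-x^*=(x_{k+1}-x^*)+\gamma S^{-1}(Bx_k-Bx_{k-1})$, the Young split of the surplus naturally produces $2\gamma\rho(1-\gamma\varepsilon_7)\|x_{k+1}-x^*\|_S^2-2\gamma^2\rho\mu^2(\tfrac{1}{\varepsilon_7}-\gamma)\|x_k-x_{k-1}\|_S^2$ rather than a bound in $\|a_{k+1}-a\|_S^2$ (decomposing against $a_{k+1}-a$ leaves a residual $\gamma S^{-1}(Bx_{k-1}-Bx^*)$ that is \emph{not} controlled by $\|x_k-x_{k-1}\|_S$), so the paper keeps the cross terms $2\gamma\langle Bx_{k-1}-Bx^*,x_k-x^*\rangle+\gamma^2\|Bx_{k-1}-Bx^*\|_{S^{-1}}^2$ inside the auxiliary quantity $d_k$ and contracts $(1+t\gamma\mu(1+\gamma\mu))\|x_k-x^*\|_S^2+d_k+\tfrac{A+\alpha}{1+t}\|x_k-x_{k-1}\|_S^2$; second, $\varepsilon_8$ and the fourth branch $\tfrac{\varepsilon_8(A+\alpha)-\gamma}{\gamma}$ of $t$ are used in the final coercivity lower bound (to ensure $\tfrac{A+\alpha}{1+t}-\tfrac{\gamma}{\varepsilon_8}\geq 0$ after splitting $2\gamma\langle Bx_k-Bx^*,x_k-x_{k+1}\rangle$), not to handle the $u_{k+1}$ term in the upper bound of the auxiliary quantity. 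You correctly flag the reconciliation of the surplus with the Lyapunov variable as the delicate step; with the paper's choice of working directly in $\|x_{k+1}-x^*\|_S^2$ that difficulty disappears and the rest of your outline goes through.
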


\begin{proof}
By the strong monotonicity of $A$, inequality \eqref{53} can be reformulated as
	\begin{equation*}
		\rho \|y_k-x^*\|_S^2 \leq \langle M_kx_k-M_ky_k-(B+C)x_k+\gamma^{-1}u_k+(B+C)x^* , y_k-x^* \rangle. 					\label{100}
	\end{equation*}
Furthermore, the following inequality, which follows from the proof of Lemma \ref{lemma4}, holds:
\begin{equation}
\begin{aligned}
& \|x_{k+1}-x^*\|_S^2+2\gamma \langle Bx_k-Bx^*,x_{k+1}-x^*\rangle+\gamma^2\|Bx_k-Bx^*\|_{S^{-1}}^2+2\langle u_{k+1}, x_{k+1}-x^*\rangle\\
& +(A+\alpha)\|x_{k+1}-x_k\|_S^2+L_k\|y_k-x_k\|_S^2+2\gamma\rho\|y_k-x^*\|_S^2 \\
\leq &\|x_k-x^*\|_S^2 +2\gamma\langle Bx_{k-1}-Bx^*,x_k-x^*\rangle+\gamma^2\|Bx_{k-1}-Bx^*\|_{S^{-1}}^2+2\langle u_k,x_k-x^*\rangle\\
& +A\|x_k-x_{k-1}\|_S^2+L_{k-1}\|y_{k-1}-x_{k-1}\|_S^2-\nu\|y_k-x_k\|_S^2,
\end{aligned} 					\label{101}
\end{equation}
where $A=\frac{\alpha(1+\varepsilon_6)\gamma^2\mu^2+\gamma\mu(\varepsilon_5\mu+1)}{1-(1+\varepsilon_6)\gamma^2\mu^2}$. By Young's inequality, we obtain
\begin{equation}
	\label{102}
	\begin{aligned}
		&2\gamma\rho\|y_k-x^*\|_S^2 \\
		= & 2\gamma\rho\|x_{k+1}+\gamma S^{-1}Bx_k-\gamma S^{-1}Bx_{k-1}-x^*\|_S^2 \\
		= & 2\gamma\rho(\|x_{k+1}-x^*\|_S^2+2\gamma\langle x_{k+1}-x^*,Bx_k-Bx_{k-1}\rangle+\gamma^2\|Bx_k-Bx_{k-1}\|_{S^{-1}}^2) \\
		\geq &  2\gamma\rho(1-\gamma\varepsilon_7)\|x_{k+1}-x^*\|_S^2+2\gamma^2\rho\mu^2(\gamma-\frac{1}{\varepsilon_7})\|x_k-x_{k-1}\|_{S}^2.
	\end{aligned} 					
\end{equation}
For any $x^*$ in $\zer(A+B+C)$, define
$$d_k(x^*)=2\gamma\langle Bx_{k-1}-Bx^*,x_k-x^*\rangle+\gamma^2\|Bx_{k-1}-Bx^*\|_{S^{-1}}^2+2\langle u_k,x_k-x^*\rangle+L_{k-1}\|y_{k-1}-x_{k-1}\|_S^2, $$
$$e_k=\nu\|y_k-x_k\|_S^2.$$
Combining  \eqref{101} and \eqref{102}, we obtain the following inequality
\begin{equation}
\label{103}
\begin{aligned}
&(1+2\gamma\rho(1-\gamma\varepsilon_7))\|x_{k+1}-x^*\|_S^2+d_{k+1}+(A+\alpha)\|x_{k+1}-x_k\|_{S}^2+e_k \\
\leq & \|x_{k}-x^*\|_S^2+d_k+(A+2\gamma^2\rho\mu^2(\frac{1}{\varepsilon_7}-\gamma))\|x_k-x_{k-1}\|_{S}^2.
\end{aligned} 	
\end{equation}
Using the Lipschitz continuity of $u_{k+1}$ and $B$,  we can bound $d_{k+1}(x^*)$ as follows:
\begin{equation}
\label{104}
\begin{aligned}
d_{k+1}(x^*)=&2\gamma\langle Bx_{k}-Bx^*,x_{k+1}-x^*\rangle+\gamma^2\|Bx_{k}-Bx^*\|_{S^{-1}}^2+2\langle u_{k+1},x_{k+1}-x^*\rangle \\
&+L_k\|y_k-x_k\|_S^2 \\
\leq &\gamma\mu\|x_k-x^*\|_S^2+\gamma\mu\|x_{k+1}-x^*\|_S^2+\gamma^2\mu^2\|x_k-x^*\|_S^2+L_k\|y_k-x_k\|_S^2 \\
&+L_k\|x_{k+1}-x^*\|_S^2+L_k\|y_k-x_k\|_S^2\\
= &(\gamma\mu(1+\gamma\mu))\|x_k-x^*\|_S^2+(\gamma\mu+L_k)\|x_{k+1}-x^*\|_S^2+2L_k\|y_k-x_k\|_S^2.
		\end{aligned}
	\end{equation}

	By the choice of $t$, we have
\begin{equation}
	\label{105}
	\begin{aligned}
		td_{k+1}(x^*) \leq&t\gamma\mu(1+\gamma\mu)\|x_k-x^*\|_S^2+e_k \\
		& +(2\gamma\rho(1-\gamma\varepsilon_7)-t\gamma\mu(1+\gamma\mu)-t-t^2\gamma\mu(1+\gamma\mu))\|x_{k+1}-x^*\|_S^2.
\end{aligned}
\end{equation}
Furthermore, we obtain
\begin{equation}
\label{106}
(A+2\gamma^2\rho\mu^2(\frac{1}{\varepsilon_7}-\gamma))\|x_{k}-x_{k-1}\|_{S}^2\leq\frac{A+\alpha}{1+t}\|x_{k}-x_{k-1}\|_{S}^2.
\end{equation}
	Combining \eqref{103}--\eqref{106}, we obtain the following inequality:
	\begin{equation*}
		\label{107}
		\begin{aligned}
		&(1+t)(1+t\gamma\mu(1+\gamma\mu))\|x_{k+1}-x^*\|_S+d_{k+1}+\frac{A+\alpha}{1+t}\|x_{k+1}-x_k\|_{S}^2) \\
		\leq &(1+t\gamma\mu(1+\gamma\mu))\|x_k-x^*\|_S^2 +d_k+\frac{A+\alpha}{1+t}\|x_{k}-x_{k-1}\|_{S}^2.
	\end{aligned}
	\end{equation*}
On the other hand, using the Lipschitz continuity of $u_{k+1}$, the monotonicity of $B$, and Young's inequality, we can deduce that there exists a constant $k_3>0$ such that
\begin{equation*}
\begin{aligned}
&(1+t\gamma\mu(1+\gamma\mu))\|x_{k+1}-x^*\|_S+d_{k+1}+\frac{A+\alpha}{1+t}\|x_{k+1}-x_k\|_{S}^2 \\
\geq &(1+t\gamma\mu(1+\gamma\mu))\|x_{k+1}-x^*\|_S+2\gamma\langle Bx_{k}-Bx^*,(x_k-x^*)-(x_k-x_{k+1})\rangle \\
&+\gamma^2\|Bx_{k}-Bx^*\|_{S^{-1}}^2-L_k\|x_{k+1}-x^*\|_S^2+\frac{A+\alpha}{1+t}\|x_{k+1}-x_k\|_{S}^2\\
\geq &(1+t\gamma\mu(1+\gamma\mu)-L_k)\|x_{k+1}-x^*\|_S-\frac{\gamma}{\varepsilon_8}\|x_k-x_{k+1}\|_S^2-\gamma\varepsilon_8\|Bx_k-Bx^*\|_{S^{-1}}^2 \\
&+\gamma^2\|Bx_{k}-Bx^*\|_{S^{-1}}^2+\frac{A+\alpha}{1+t}\|x_{k+1}-x_k\|_{S}^2\\
= &(1+t\gamma\mu(1+\gamma\mu)-L_k)\|x_{k+1}-x^*\|_S+\gamma(\gamma-\varepsilon_8)\|Bx_k-Bx^*\|_{S^{-1}}^2 \\
&+(\frac{A+\alpha}{1+t}-\frac{\gamma}{\varepsilon_8})\|x_{k+1}-x_k\|_{S}^2\\
			\geq & k_3\|x_{k+1}-x^*\|_S^2.
		\end{aligned} 	
	\end{equation*}
	Consequently, we have
	\begin{equation*}
		\begin{aligned}
			\label{107}
			k_3\|x_{k+1}-x^*\|^2_S & \leq(1+t\gamma\mu(1+\gamma\mu))\|x_{k+1}-x^*\|_S+d_{k+1}+\frac{A+\alpha}{1+t}\|x_{k+1}-x_k\|_{S}^2\\
			&\leq \frac{(1+t\gamma\mu(1+\gamma\mu))\|x_k-x^*\|_S^2 +d_k+\frac{A+\alpha}{1+t}\|x_{k}-x_{k-1}\|_{S}^2}{1+t} \\
			&\leq \ldots \leq \frac{(1+t\gamma\mu(1+\gamma\mu))\|x_1-x^*\|_S^2 +d_1+\frac{A+\alpha}{1+t}\|x_{1}-x_{0}\|_{S}^2}{(1+t)^k}.
		\end{aligned}
	\end{equation*}
Hence, the sequence $\{x_k\}_{k \in \mathbb{N}}$ generated by Algorithm \ref{algorithm4} converges $R$-linearly to a point $x^*$ in $\zer(A+B+C)$.
\end{proof}

\section{Numerical experiments}	
In this section, we focus on a special case of Algorithm \ref{algorithm4}, namely, Algorithm \eqref{new-ORFBS}. We assess the performance of Algorithm \eqref{new-ORFBS} with the ORFBS algorithm \eqref{ORFBS}. Note that, for the numerical experiments in this section, Algorithm \eqref{remark2+} (derived in Remark \ref{remark2})  is equivalent to the SRFBS algorithm \eqref{SRFBS}, and therefore omitted from the comparative analysis. All implementations were written in MATLAB and executed on a computer equipped with an Intel Core i7-11800H CPU (2.30GHz) and 32GB RAM, running Windows 11.

\subsection{Nonlinear constrained optimization problems}	
{\rm
In this subsection, we conduct numerical experiments on constrained nonlinear optimization problems to validate the effectiveness of the proposed algorithm. Specifically, we consider the following constrained optimization problem:
	\begin{equation}
		\label{fh}
		\min_{x \in C}f(x)+h(x),
	\end{equation}
	where $C= \{ x \in \mathcal{H} | ( \forall i \in \{1,...,q\}) \ g_i(x) \leq 0\}.$ Here, $f: \mathcal{H} \rightarrow ( -\infty ,+\infty]$ is a proper, convex  and  lower semi-continuous function, for each $i \in \{1,...,q\}$, $g_i : \dom(g_i) \subset \mathcal{H} \rightarrow \mathbb{R}$ and $h: \mathcal{H} \rightarrow \mathbb{R}$ are $C^1$ convex functions in $\nt \dom g_i$ and $\mathcal{H}$, respectively, with $\nabla h$ being $\beta$-Lipschitz continuous. The solution to problem \eqref{fh} corresponds to  a saddle point of the Lagrangian function
	\begin{equation*}
		L(x,u)=f(x)+h(x)+u^{\top}g(x)-\iota_{\mathbb{R}_+^q}(u),
	\end{equation*}
	where $\iota_{\mathbb{R}_+^q}$ denotes the indicator function of $\mathbb{R}_+^q$. Under standard regularity conditions, this solution can be obtained by solving the following monotone inclusion problem  \cite{RT,FBHF}: find $x \in Y$ such that there exists $u \in \mathbb{R}_{+}^q$ satisfying
	\begin{equation}
		\label{ABCxu}
		(0,0) \in A(x,u)+B(x,u)+C(x,u),
	\end{equation}
	where $Y \subset \mathcal{H}$ is a nonempty closed convex set modeling prior information on the solution. The operators are defined as follows: $A:(x,u)\mapsto\partial f(x)\times{N_{\mathbb{R}_+^q}u}$ is maximally monotone, $B:(x,u)\mapsto(\sum_{i=1}^q u_i \nabla g_i(x),-g_1(x),...,-g_q(x))$ is nonlinear, monotone and continuous, and $C:(x,u)\mapsto(\nabla h(x),0)$ is $\frac{1}{\beta}$-cocoercive.
	
	Let  $\mathcal{H}=\mathbb{R}^N$, $f=\iota_{[0,1]^N}$, $g_i(x)=d_i^{\top}x$ for all $i \in  \{1,...,q\}$ with $d_1,\ldots,d_q\in \mathbb{R}^N$, and  $h(x)=\frac{1}{2}\|Gx-b\|^2$ where $G$ is an $m \times N$ real matrix, $N=2m$, $b\in \mathbb{R}^m$. The operators in \eqref{ABCxu} then take the form:
	\begin{equation*}
		\label{A1}
		\aligned
		&A:(x,u)\mapsto\partial{\iota_{[0,1]^N}(x)}\times{N_{\mathbb{R}_+^p}u},\\
		& B:(x,u)\mapsto(D^\top u,-Dx),\\
		& C:(x,u)\mapsto(G^\top(Gx-b),0),\\
		\endaligned
	\end{equation*}
	where  $x\in \mathbb{R}^N$, $u\in \mathbb{R}_+^q$ and $D=[d_1,\ldots,d_q]^\top$. Here, $A$ is maximally monotone, $B$ is $L$-Lipschitz continuous with $L=\|D\|$, and $C$ is  $\frac{1}{\beta}$-cocoercive with $\beta =\|G\|^{2}$. Therefore, problem \eqref{ABCxu} can be solved by ORFBS algorithm \eqref{ORFBS}.
	
	Alternatively, we can decompose $B$ as $B_1+B_2$, where
	\begin{equation}
		\label{B1B2}
		\aligned
		& B_1:(x,u)\mapsto(\frac{1}{2}D^\top u,-\frac{1}{2}Dx),\\
		& B_2:(x,u)\mapsto(\frac{1}{2}D^\top u,-\frac{1}{2}Dx).\\
		\endaligned
	\end{equation}
It is straightforward to verify that $A+B_1$ is maximally monotone, while $B_1$ and $B_2$ are both $\frac{L}{2}$-Lipschitz continuous, with $L=\|D\|$. Consequently, problem \eqref{ABCxu} can also be solved by  Algorithm \eqref{new-ORFBS}.
	
In the numerical experiments, the matrices $G,D$, the vector $b$, and the initial values $(x_{0},u_{0})$  are all randomly generated. The following stopping criterion is employed:
	$$
	E_k=\frac{\|(x_{k+1}-x_k,u_{k+1}-u_k)\|}{\|(x_k,u_k)\|}<10^{-6},
	$$
We test eight different problem sizes,  with ten randomly generated instances for each size. Table \ref{results-1} reports the average number of iterations (denoted as "av.iter") and the average CPU time (denoted as "av.time")  over these ten instances. As shown in Table \ref{results-1}, Algorithm \eqref{new-ORFBS} outperforms the ORFBS algorithm in both iteration count and computational efficiency in most cases.
	\begin{table}[h!]
		\centering
		\footnotesize
		\renewcommand\arraystretch{1.5}
		\setlength\tabcolsep{5pt}
		\caption{Computational results with ORFBS and Algorithm \eqref{new-ORFBS}}
		\begin{tabular}{c c c c c c}
			\hline
			\multicolumn{2}{c}{\textbf{Problem size}} &
			\multicolumn{2}{c}{\textbf{av.iter}} &
			\multicolumn{2}{c}{\textbf{av.time (s)}} \\
			\cline{1-2} \cline{3-4} \cline{5-6}
			$N$ & $q$ & ORFBS & Algorithm \eqref{new-ORFBS} & ORFBS & Algorithm \eqref{new-ORFBS} \\
			\hline
			\multirow{4}{*}{2000}
			& 100  & 7267.8    & \textbf{6806.1}    & 25.669  & \textbf{15.831} \\
			& 200  & 5950.1  & \textbf{5742.5}  & 35.587    & \textbf{17.538} \\
			& 500  & 4441.3    & \textbf{4244.7}  & 77.558   & \textbf{31.192} \\
			& 1000 & 3187.4 & \textbf{3134}  & 169.68    & \textbf{58.53} \\
			\hline
			\multirow{3}{*}{4000}
			& 100  & 8724.7   & \textbf{8525.4}    & 282.33   & \textbf{212.37} \\
			& 200  & 6445.4  & \textbf{6401.8}  & 278.65    & \textbf{186.07} \\
			& 500  & 3427.7 & \textbf{3713.6}  & 368.03    & \textbf{158.26} \\
			& 1000  & \textbf{811.7} & 1295  & 157.74   & \textbf{79.902} \\
			\hline
		\end{tabular} \label{results-1}
	\end{table}

\subsection{Portfolio optimization problem}	
This subsection considers the classical mean-variance portfolio optimization problem for risk minimization. Suppose a portfolio consists of $n$ distinct assets, where each asset $i$ has a random rate of return with expected value $m_i$.  The objective is to determine investment weights $x_i$ that minimize portfolio risk while ensuring a specified minimum expected return $r$.  Let $H \in \mathbb{R}^{225 \times 225}$ denote the covariance matrix of asset returns ($\|H\| = 0.2263$). The optimization problem can be formulated as:
\begin{equation}\label{portfolio-problem}
	\begin{aligned}
		\min_{x\in R^{225}} & \frac{1}{2}x^{T}Hx, \\
		s.t. & \sum_{i=1}^{75} x_i \geq 0.3, \
		\sum_{i=76}^{150} x_i \geq 0.3, \
		\sum_{i=151}^{225} x_i \geq 0.3, \\
		& \sum_{i=1}^{225}m_i x_i \geq r, \ \sum_{i=1}^{225}x_i = 1, 0\leq x_i \leq 1, i = 1, \cdots, 225.
	\end{aligned}
\end{equation}
Define the constraint mapping $g(x) = Dx + b$ where:
\begin{equation*}
	\left[
	\begin{array}{ccccccccc}
		-m_1 & \cdots & -m_{75} & -m_{76} & \cdots & -m_{150} & -m_{151} & \cdots & -m_{225} \\
		-1 & \cdots & -1 & 0 & \cdots & 0 & 0 & \cdots & 0\\
		0 & \cdots & 0 & -1 & \cdots & -1 & 0 & \cdots & 0\\
		0 & \cdots & 0 & 0 & \cdots & 0 & -1 & \cdots & -1
	\end{array}
	\right]
	\left[
	\begin{array}{cccc}
		x_1\\
		x_2\\
		\vdots\\
		x_{225}
	\end{array}
	\right]
	+\left[
	\begin{array}{cccc}
		r\\
		0.3\\
		0.3\\
		0.3
	\end{array}
	\right]
\end{equation*}
and the feasible set $C_2 = \{x\in \mathbb{R}^n | \sum_{i=1}^{n}x_i = 1,\ 0\leq x_i \leq 1, i = 1, \cdots, n\}$. Solutions correspond to saddle points of the Lagrangian:
\begin{equation*}
	L(x,u)=\frac{1}{2}x^{T}Hx+\iota_{C_2}(x)+u^{\top}g(x)-\iota_{\mathbb{R}_+^4}(u).
\end{equation*}
Under standard qualification conditions, the solution to problem \eqref{portfolio-problem} can be obtained  by solving the following monotone inclusion problem: find $x \in \mathbb{R}^{225}$ such that $\exists u \in \mathbb{R}_{+}^4$,
\begin{equation*}
	(0,0) \in A(x,u)+B(x,u)+C(x,u),
\end{equation*}
where the operators are defined as
\begin{equation*}
	\label{A2}
	\aligned
	&A:(x,u)\mapsto\partial{\iota_{C_2}(x)}\times{N_{\mathbb{R}_+^4}u},\\
	&B:(x,u)\mapsto(D^\top u,-Dx-b),\\
	&C:(x,u)\mapsto(Hx,0).\\
	\endaligned
\end{equation*}
Here, $A$ is maximally monotone, $B$ is $L$-Lipschitz continuous with $L =\|D\|$, and $C$ is $\beta^{-1}$-cocoercive with $\beta =\|H\|$. Therefore, the ORFBS algorithm \eqref{ORFBS} can be employed to solve problem \eqref{portfolio-problem}. Moreover, following the decomposition in \eqref{B1B2}, we express $B$ as the sum $B_1 + B_2$. Consequently, Algorithm \eqref{new-ORFBS} provides an alternative method for solving problem \eqref{portfolio-problem}.

This study employs the portfolio optimization formulation \eqref{portfolio-problem}, which is adopted from the MATLAB optimization case library. The benchmark dataset used in the experiment is sourced from the OR-Library \cite{ChangCOR2000}, where the asset returns $m_i$ range from $-0.008489$ to $0.003971$. We evaluate the algorithms under three target return levels, $r = 0.001$, $0.002$, and $0.003$, in our experimental evaluations. As presented in Table \ref{results-2}, the solution accuracy achieved by the ORFBS algorithm is comparable to that of Algorithm \eqref{new-ORFBS}. Moreover, Figure \ref{solution-portfolio} illustrates that the resulting portfolio weight distributions exhibit consistent sparsity patterns across all target levels.

\begin{table}[htbp]
	\small
	\centering
	\caption{The objective function values (Obj), number of iterations (Iter), and CPU time (in seconds) for the compared methods on the portfolio optimization problem.}
	\begin{tabular}{c|c|ccc}
		\hline
		$r$ &  Methods &  Obj   &  Iter  & CPU  \\
		\hline
		\hline
		\multirow{2}[1]{*}{0.001}
		& ORFBS   & $1.6386e-4$        & $\textbf{248954}$       & $\textbf{6.1836}$       \\
		& Algorithm \eqref{new-ORFBS}   & $1.6386e-4$      & $249453$       & $6.5429$       \\
		\hline
		\multirow{2}[1]{*}{0.002}
		& ORFBS   & $2.0097e-4$        & $\textbf{271736}$       & $\textbf{6.6391}$       \\
		& Algorithm \eqref{new-ORFBS}   & $2.0097e-4$        & $257843$       & $6.8260$       \\
		\hline
		\multirow{2}[1]{*}{0.003}
		& ORFBS   & $2.7686e-4$        & $217194$       & $\textbf{5.7700}$       \\
		& Algorithm \eqref{new-ORFBS}   & $2.7686e-4$        & $\textbf{212883}$       & $6.5494$       \\
		\hline
	\end{tabular}\label{results-2}
\end{table}
\begin{figure}[h]
	\centering
	\setlength{\abovecaptionskip}{-3pt}
	\setlength{\belowcaptionskip}{-2pt} 
	\captionsetup[subfigure]{labelformat=simple, skip=2pt} 
	
	\subfigure[$r=0.001$]{
		\scalebox{0.3}{\includegraphics{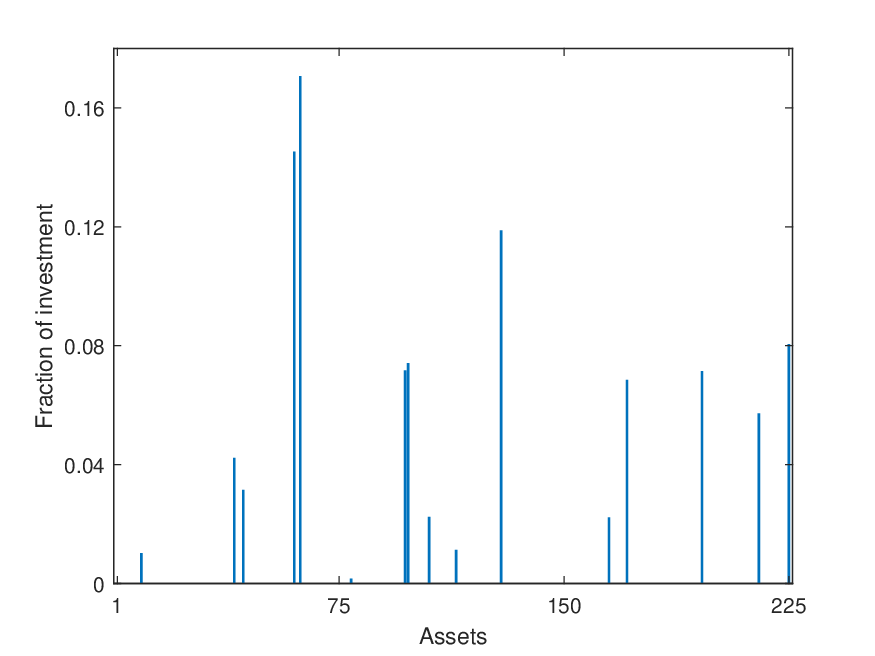}}
	} \hspace{1mm} 
	\subfigure[$r=0.002$]{
		\scalebox{0.3}{\includegraphics{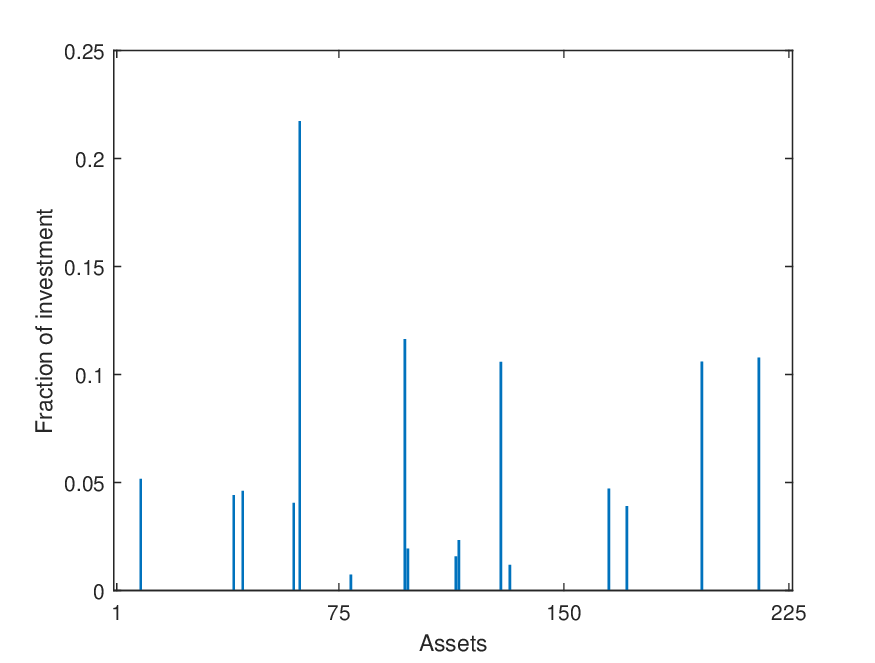}}
	} \hspace{1mm} 
	\subfigure[$r=0.003$]{
		\scalebox{0.3}{\includegraphics{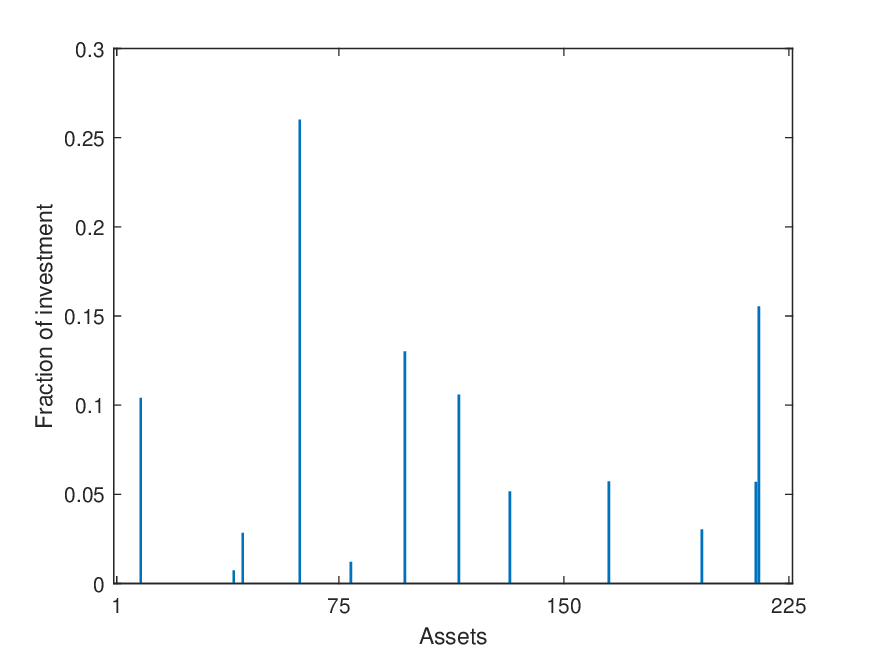}}
	}
	
	\vspace{3mm} 
	
	\subfigure[$r=0.001$]{
		\scalebox{0.3}{\includegraphics{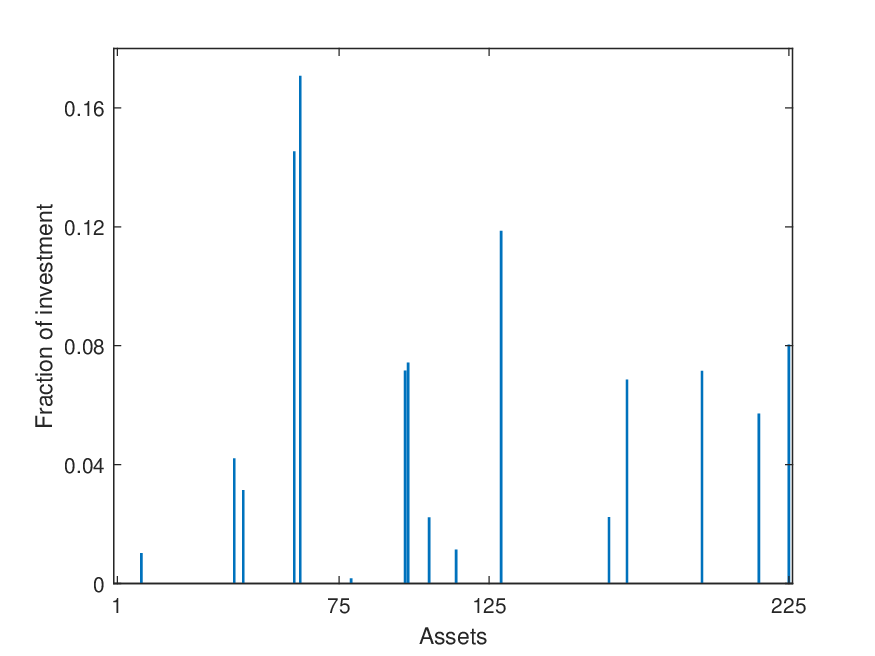}}
	} \hspace{1mm} 
	\subfigure[$r=0.002$]{
		\scalebox{0.3}{\includegraphics{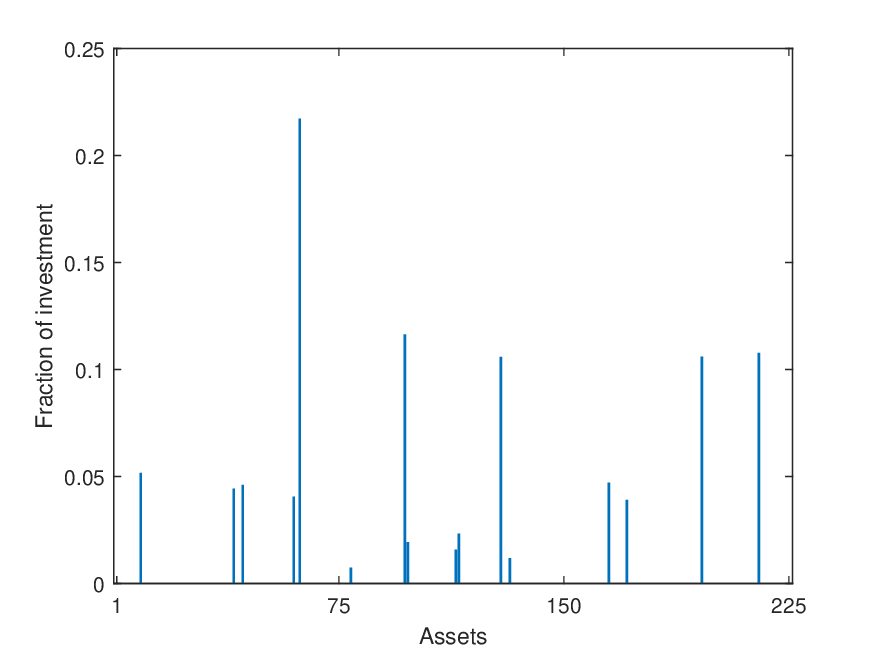}}
	} \hspace{1mm} 
	\subfigure[$r=0.003$]{
		\scalebox{0.3}{\includegraphics{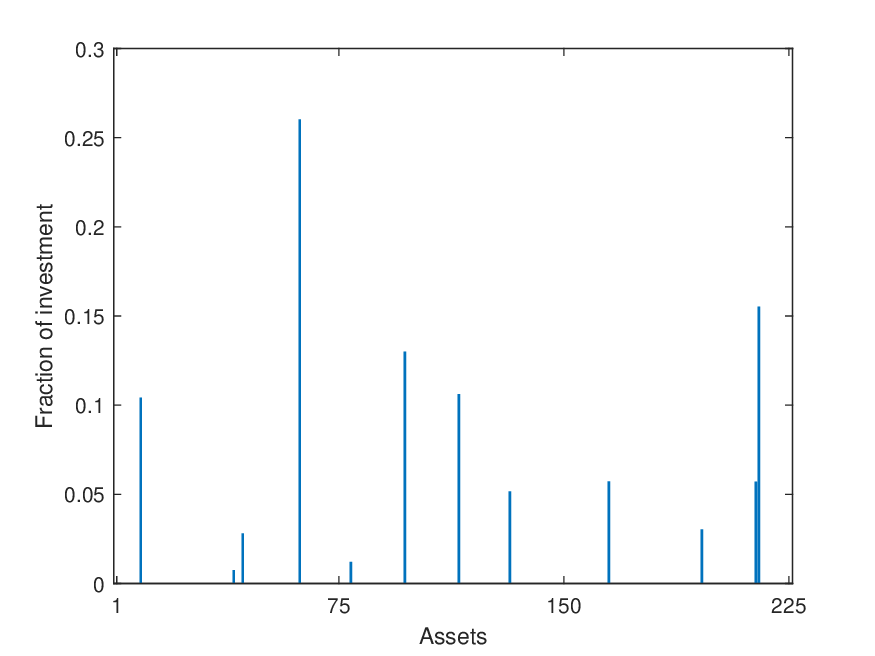}}
	}
	\vspace{3mm}
	\caption{Portfolio optimization solutions for problem \eqref{portfolio-problem}: ORFBS algorithm \eqref{ORFBS} (top row)  versus Algorithm \eqref{new-ORFBS} (bottom row).}
	\label{solution-portfolio}
\end{figure}


\section{Conclusions}

This study investigated structured monotone inclusion problems that consist of the sum of three operators: a maximally monotone operator, a maximally monotone and Lipschitz continuous operator, and a cocoercive operator. To address these problems, we developed three novel splitting algorithms by enhancing the semi-forward-reflected-backward, semi-reflected-forward-backward, and outer reflected forward-backward splitting frameworks with a nonlinear momentum term. Under suitable step-size conditions, we rigorously established the weak convergence of the proposed algorithms. Furthermore, when the strong monotonicity assumption holds, we proved their linear convergence rates. Comprehensive numerical experiments were conducted on both synthetic datasets and real-world quadratic programming problems in portfolio optimization. The results consistently verified the effectiveness and superior performance of the proposed algorithms in terms of convergence speed and solution accuracy compared with existing methods.

\section*{Funding}

This work was supported  by the National Natural Science Foundations of China (12031003, 12571491, 12571558), the Guangzhou Education Scientific Research Project 2024 (202315829), the Innovation Research Grant NO.JCCX2025018 for Postgraduate of Guangzhou University, and the Jiangxi Provincial Natural Science Foundation (20224ACB211004).

\section*{Competing Interests}

The authors declare no competing interests.


\end{document}